\newif\ifJOURNAL\global\JOURNALfalse
\newif\ifHYPER\global\HYPERtrue
\newif\ifINTERNAL\global\INTERNALfalse
\definecolor{ks-green}{rgb}{0.0,0.7,0.0}
\definecolor{ks-red}{rgb}{0.7,0.0,0.0}
\definecolor{ks-blue}{rgb}{0.0,0.0,0.7}
\numberwithin{equation}{section}
\theoremstyle{plain}
\newtheorem{theorem}[equation]{Theorem}
\newtheorem{lemma}[equation]{Lemma}
\newtheorem{corollary}[equation]{Corollary}
\newtheorem{proposition}[equation]{Proposition}
\newtheorem{algorithm}[equation]{Algorithm}
\theoremstyle{definition}
\newtheorem{definition}[equation]{Definition}
\newtheorem{Example}[equation]{Example}
\newtheorem{Remark}[equation]{Remark}
\newenvironment{remark}{\emph{Remark.}}{}
\newenvironment{remarks}{\emph{Remarks.}}{}
\newenvironment{notation}{\emph{Notation.}}{}
\newcommand{\lnum}[1]{\makebox[0.5cm][r]{\textnormal{\footnotesize{#1}}}}
\newenvironment{algtest}{\bgroup\bfseries\upshape
  \tabbing
    \hspace*{0.7cm}\=
    \hspace*{1.5em}\=\hspace*{1.5em}\=%
    \hspace*{1.5em}\=\hspace*{1.5em}\= \kill \>}{%
  \endtabbing\egroup}
\newcommand{\lnum}[1]{\makebox[0.5\mathindent][r]{\textnormal{\footnotesize{#1}}}}
\newenvironment{algtest}{\bgroup\bfseries\upshape
  \tabbing
    \hspace*{\mathindent}\=
    \hspace*{1.5em}\=\hspace*{1.5em}\=%
    \hspace*{1.5em}\=\hspace*{1.5em}\= \kill \>}{%
  \endtabbing\egroup}
\DeclareMathOperator{\lgcd}{\mathrm{lgcd}}
\newcommand{\block}[1]{\underline{#1}}
\newcommand{\tabstrut}{\rule[-0.5ex]{0pt}{2.8ex}}
\newcommand{\mthstrut}{\rule[-0.5ex]{0pt}{2.2ex}}
\newcommand\trp{^{\!\top}}
\newcommand\inv{^{-1}}
\newcommand\numN{\mathbb{N}}
\newcommand{\freeALG}[2]{#1\langle #2\rangle}
\newcommand{\freeFLD}[2]{#1(\!\langle #2\rangle\!)}
\newcommand{\perm}{\Sigma}
\newcommand{\field}[1]{\mathbb{#1}}
\newcommand{\als}[1]{\mathcal{#1}}
\newcommand{\complexity}{\mathcal{O}}
\newcommand{\aclo}[1]{\overline{#1}}
\DeclareMathOperator{\rank}{rank}
\DeclareMathOperator{\pivot}{\#_\text{pb}}
\DeclareMathOperator{\linsp}{span}
\begin{document}
\ifJOURNAL
\articletype{Research Article{\hfill}Open Access}
\title{\huge A Standard Form in (some) 
  Free Fields: How to construct 
  Minimal Linear Representations}
\author*[1]{Konrad Schrempf}
\affil[1]{University of Vienna, Faculty for Mathematics,
  Oskar-Morgenstern-Platz~1, 1090 Wien, Austria.
  E-mail: math@versibilitas.at,
  ORCID 0000-0001-8509-009X}
\runningtitle{A Standard Form in Free Fields}
\else
\title{A Standard Form in (some) Free Fields:\\
  How to construct Minimal Linear Representations}
\author{Konrad Schrempf%
  \footnote{Contact: math@versibilitas.at (Konrad Schrempf),
    \url{https://orcid.org/0000-0001-8509-009X},
    Universität Wien, Fakultät für Mathematik,
    Oskar-Morgenstern-Platz~1, 1090 Wien, Austria.
    }
  \hspace{0.2em}\href{https://orcid.org/0000-0001-8509-009X}{%
  \includegraphics[height=10pt]{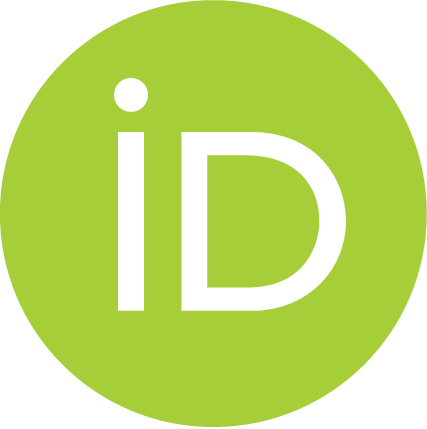}}
  }
\fi

\ifJOURNAL
\else
\maketitle
\fi

\begin{abstract}
{We describe a standard form for the elements in
the universal field of fractions of free associative
algebras (over a \emph{commutative} field).
It is a special version of the normal form
provided by Cohn and Reutenauer
and enables the use of linear algebra techniques for the
construction of minimal linear representations (in standard form)
for the sum and the product of two elements (given in standard form).
This completes ``minimal'' arithmetic in free fields
since ``minimal'' constructions for the inverse
are already known.
The applications are wide: linear algebra (over the
free field), rational identities, computing the
left gcd of two non-commutative polynomials, etc.}
\end{abstract}

\ifJOURNAL
\maketitle
\keywords{free associative algebra,
  minimal linear representation, admissible linear system,
  left greatest common divisor, non-commutative polynomials}
%  \classification[PACS]{}
 % \communicated{...}
 % \dedication{...}
  \journalname{Open Mathematics}
\DOI{DOI}
  \startpage{1}
  \received{..}
  \revised{..}
  \accepted{..}

  \journalyear{2017}
  \journalvolume{1}
\else
\medskip
\emph{Keywords and 2020 Mathematics Subject Classification.}
Free associative algebra,
minimal linear representation, admissible linear system,
left greatest common divisor, non-commutative polynomials;
Primary 16K40, 16Z05; Secondary 16G99, 16S85, 15A22
\fi

\section*{Introduction}

While the embedding of the integers into the field of
rational numbers is easy, that of non-commutative rings
(into skew fields) is not that straight forward even in special
cases \cite{Ore1931a}% MR1503010 0003-486X
. After Ore's construction it took almost forty years
and many contributors to develop a more general theory
\cite[Chapter~7]{Cohn2006a}% MR2246388 book
. The embedding of the \emph{free associative algebra}
(over a \emph{commutative} field) into a ring of quotients
(non-commutative localization) is even classified as
``ugly'' \cite{Lam1999a}% MR1653294 book
.

On the other hand there are many parallels between the
ring of integers and the free associative algebra,
for example, both have a \emph{distributive factor lattice}
(DFL) \cite{Cohn1982e}% MR693161 article
\ or
\cite[Section~3.5]{Cohn1985a}% MR800091 book
. And, starting from the \emph{normal form} (minimal linear representation)
of Cohn and Reutenauer
\cite{Cohn1994a}% MR1276109 0008-414X
\ of an element in the \emph{universal field of fractions}
(of the free associative algebra),
we will formulate a \emph{standard form} which can
be seen as a ``generalized'' fraction.
As a reminiscence to the work with ``classical''
fractions (we learn at school) we call them
briefly ``free fractions'' 
\cite{Schrempf2018c2}% X180905425 arxiv
.

For an introduction to free fields we recommend
\cite[Section~9.3]{Cohn2003b}% MR1953987 book
\ and with respect to linear representations
in particular \cite{Cohn1999a}% MR1723470 0218-1967
. For details we refer to 
\cite[Chapter~7]{Cohn2006a}% MR2246388 book
\ or 
\cite[Section~6.4]{Cohn1995a}% MR1349108 book
.
Since this work is only one part in a series,
further information and references can be found
in \cite{Schrempf2017a9}% MR3864857 0218-1967
\ (linear word problem, minimal inverse),
\cite{Schrempf2017b9}% JSC2018 0747-7171
\ (polynomial factorization) and
\cite{Schrempf2017c9}% IEJA001 1306-6048
\ (general factorization theory).

\medskip
The main idea of the \emph{standard form} is simple:
instead of viewing the system matrix of a linear representation
as a single ``block'' we transform it into a
block upper triangular form with smaller diagonal blocks.
If these \emph{pivot blocks} are small enough (called ``refined''),
\emph{linear}
techniques can be used to eliminate \emph{all} superfluous block rows
or columns, that is, solving ``local'' word problems,
in the ``sum'' or the ``product'' of two linear
representations, eventually yielding a \emph{minimal}
linear representation.
This can be accomplished with complexity $\complexity(dn^5)$
for an alphabet with $d$ letters and a linear representation
of dimension~$n$ with pivot blocks of size $\sqrt{n}$.
For the refinement of pivot blocks we need to solve
---at least in general--- systems of polynomial equations.
However \emph{linear} techniques can be used in some cases
to ``break'' big pivot blocks into smaller ones
(see Remark~\ref{rem:mr.linref}).

Since almost all here is rather elementary it should be noted
that it needs some effort to dig deep enough into the theory
in the background (in particular that of Cohn) to really
understand what is going on. Despite of the main results
(mentioned in the following) there is only one non-trivial
observation (formulated in Theorem~\ref{thr:mr.minchar}):
A given \emph{refined} linear representation 
is minimal if \emph{none} of the \emph{linear} systems of equations
for block row or column minimization has a solution.
(In the general non-refined case nothing can be said about minimality if there
is no more ``linear'' minimization step possible.)

In other words: while the \emph{normal form} 
\cite{Cohn1994a}% MR1276109 0008-414X
\ ``linearizes'' the word problem in free fields 
\cite[Section~2]{Schrempf2017a9}% MR3864857 0218-1967
, the \emph{standard form}
``linearizes'' (the minimization of) the sum and the
product (of two elements).

\medskip
Section~\ref{sec:mr.preliminaries} provides the basic setup
and Section~\ref{sec:mr.ratop} summarizes several (different)
constructions of linear representations for the rational
operations (sum, product and inverse).
In a first reading only the first two propositions are
important.
In Section~\ref{sec:mr.standardform} we develop the notation
to be able to formulate a \emph{standard form}
in Definition~\ref{def:mr.stdals}.
The main result is then Theorem~\ref{thr:mr.minchar}
(or Algorithm~\ref{alg:mr.minals2})
for the minimization in Section~\ref{sec:mr.minimizing}.
And finally, in Section~\ref{sec:mr.applications},
some applications are mentioned.
Example~\ref{ex:mr.hua} can also serve as an
introduction for the work (by hand)
with linear representations.

\medskip
The intention of this paper is to be independent
of the other three papers in this series (about
the free field) as far as possible and leave it
to the reader, for example,
to interpret a standard form of the
inverse of a polynomial as its factorization
(into irreducible elements).
Although the idea for minimizing ``polynomial'' linear representations
is similar to that in Section~\ref{sec:mr.minimizing},
\cite[Algorithm~32]{Schrempf2017b9}% JSC2018 0747-7171
\ is only a very special case of Algorithm~\ref{alg:mr.minals2}
and the ``refinement'' in the former case is trivial.

Beside the algebraic approach presented here
there are analytical methods for solving the \emph{word problem}
(or testing \emph{rational identities} like in Example~\ref{ex:mr.hua})
in polynomial time by plugging in ``sufficiently large'' matrices
\cite{Garg2016a,Ivanyos2018a}% MR3868734 1016-3328
. Closely related to linear representations
are \emph{realizations} which can be
``cut down'' by plugging in operators
\cite{Helton2018a}% MR3718048 0022-1236
\ or ``reduced'' by plugging in matrices
\cite{Volcic2018a}% MR3758509 0021-8693
.
Yet another point of view is from \emph{invariant subspaces}
\cite{Gohberg2006a,Helton2018b}% MR3804686 0001-8708
.

Once the rich structure of Cohn and Reutenauer's
\emph{normal form} becomes ``visible'', a lot can
be done, transforming the rather abstract free field
into an \emph{applied}.
One can use non-commutative ``rational functions'' just like
rational numbers respectively ``free fractions'' just like
``classical'' fractions \ldots

\section{Preliminaries}\label{sec:mr.preliminaries}

We represent elements (in free fields) by
\emph{admissible linear systems} (Definition~\ref{def:mr.als}),
which are just a special form of \emph{linear representations}
(Definition~\ref{def:mr.rep}). Rational operations
(scalar multiplication, addition, multiplication, inverse) can be
easily formulated in terms of linear representations
(Proposition~\ref{pro:mr.ratop}).

\medskip

\begin{notation}
The set of the natural numbers is denoted by $\numN = \{ 1,2,\ldots \}$.
Zero entries in matrices are usually replaced by (lower) dots
to emphasize the structure of the non-zero entries
unless they result from transformations where there
were possibly non-zero entries before.
We denote by $I_n$ the identity matrix
and $\perm_n$ the permutation matrix that reverses the order
of rows/columns (of size $n$)
respectively $I$ and $\perm$ if the size is clear from the context.
\end{notation}

\medskip
Let $\field{K}$ be a \emph{commutative} field,
$\aclo{\field{K}}$ its algebraic closure and
$X = \{ x_1, x_2, \ldots, x_d\}$ be a \emph{finite} (non-empty) alphabet.
$\freeALG{\field{K}}{X}$ denotes the \emph{free associative
algebra} (or \emph{free $\field{K}$-algebra})
and $\field{F} = \freeFLD{\field{K}}{X}$ its \emph{universal field of
fractions} (or ``free field'') 
\cite{Cohn1995a,Cohn1999a}% MR1349108 book
. An element in $\freeALG{\field{K}}{X}$ is called (non-commutative or nc)
\emph{polynomial}.
In our examples the alphabet is usually $X=\{x,y,z\}$.
Including the algebra of \emph{nc rational series}
we have the following chain of inclusions:
\begin{displaymath}
\field{K}\subsetneq \freeALG{\field{K}}{X}
  \subsetneq \field{K}^{\text{rat}}\langle\!\langle X\rangle\!\rangle
  \subsetneq \freeFLD{\field{K}}{X} =: \field{F}.
\end{displaymath}
The \emph{free monoid} $X^*$ generated by $X$
is the set of all \emph{finite words}
$x_{i_1} x_{i_2} \cdots x_{i_n}$ with $i_k \in \{ 1,2,\ldots, d \}$.
An element of the alphabet is called \emph{letter},
one of the free monoid \emph{word}.
The multiplication on $X^*$ is the \emph{concatenation}
of words, that is,
$(x_{i_1} \cdots x_{i_m})\cdot (x_{j_1} \cdots x_{j_n})
= x_{i_1} \cdots x_{i_m} x_{j_1} \cdots x_{j_n}$,
with neutral element $1$, the \emph{empty word}.
For detailed introductions see
\cite[Chapter~1]{Berstel2011a}% MR2760561 book
\ or
\cite[Section~I.1]{Salomaa1978a}% MR0483721 book
.

\begin{definition}[Inner Rank, Full Matrix, Hollow Matrix
\cite{Cohn1985a,Cohn1999a}% MR800091 book
]\label{def:mr.full}
Given a matrix $A \in \freeALG{\field{K}}{X}^{n \times n}$, the \emph{inner rank}
of $A$ is the smallest number $m\in \numN$
such that there exists a factorization
$A = T U$ with $T \in \freeALG{\field{K}}{X}^{n \times m}$ and
$U \in \freeALG{\field{K}}{X}^{m \times n}$.
The matrix $A$ is called \emph{full} if $m = n$,
\emph{non-full} otherwise.
It is called \emph{hollow} if it contains a zero submatrix of size
$k \times l$ with $k+l>n$.
\end{definition}

\begin{definition}[Associated and Stably Associated Matrices
\cite{Cohn1995a}% MR1349108 book
]\label{def:mr.ass}
Two matrices $A$ and $B$ over $\freeALG{\field{K}}{X}$ (of the same size)
are called \emph{associated} over a subring $R\subseteq \freeALG{\field{K}}{X}$ 
if there exist
invertible matrices $P,Q$ over $R$ such that
$A = P B Q$. $A$ and $B$ (not necessarily of the same size)
are called \emph{stably associated}
if $A\oplus I_p$ and $B\oplus I_q$ are associated for some unit
matrices $I_p$ and $I_q$.
Here by $C \oplus D$ we denote the diagonal sum
$\bigl[\begin{smallmatrix} C & . \\ . & D \end{smallmatrix}\bigr]$.
\end{definition}

\begin{lemma}[%
\protect{\cite[Corollary~6.3.6]{Cohn1995a}% MR1349108 book
}]\label{lem:mr.cohn95.636}
A linear square matrix over $\freeALG{\field{K}}{X}$
which is not full is associated over $\field{K}$ to a linear
hollow matrix.
\end{lemma}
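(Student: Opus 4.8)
The plan is to reduce the statement to a known structure theorem about linear matrices, using the fact that a linear matrix which is not full admits, by definition of inner rank, a factorization through a module of smaller dimension. First I would write the given linear square matrix as $A = \sum_{x\in X} A_x\, x + A_0$ where each $A_x$ and $A_0$ lies in $\field{K}^{n\times n}$; ``linear'' means no word of length $\geq 2$ occurs. Since $A$ is not full, its inner rank is some $m < n$, so there is a factorization $A = TU$ with $T\in\freeALG{\field{K}}{X}^{n\times m}$, $U\in\freeALG{\field{K}}{X}^{m\times n}$. The first task is to arrange that $T$ and $U$ can be taken \emph{linear} as well — one of degree $0$ and one of degree $\leq 1$, or both of degree $\leq 1$ with a degree bound — by a degree/grading argument on $\freeALG{\field{K}}{X}$: comparing homogeneous components of $TU = A$ and using that $A$ has degree $\leq 1$ forces the higher-degree parts of $T$ and $U$ to multiply to zero, and after an invertible change of basis over $\field{K}$ on the intermediate free module one can absorb them.

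Next I would invoke the comparison with commutative (or ``generic'') behaviour: a linear matrix over $\freeALG{\field{K}}{X}$ that is non-full must have, after multiplication by invertible matrices \emph{over $\field{K}$} on the left and right (i.e.\ after passing to an associated matrix in the sense of Definition~\ref{def:mr.ass} with $R=\field{K}$), a block of zeros large enough to witness non-fullness combinatorially. Concretely: using the linear factorization $A = TU$ with $T$ of size $n\times m$ and $U$ of size $m\times n$, I would perform $\field{K}$-row operations on $T$ to bring it to a form whose ``constant part'' has a prescribed echelon shape, and correspondingly $\field{K}$-column operations on $U$; the point is that one can choose invertible $P,Q\in\field{K}^{n\times n}$ so that $PT$ has its last $n-r$ rows supported only on the ``letter part'' and $UQ$ has a matching structure, where $r \leq m < n$. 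Then $PAQ = (PT)(UQ)$, and a counting of which rows of $PT$ and which columns of $UQ$ are forced to vanish in the product yields a zero submatrix of size $k\times l$ with $k + l = 2n - r > n$, i.e.\ $PAQ$ is hollow. Since $P,Q$ are over $\field{K}$, $A$ is associated over $\field{K}$ to the hollow matrix $PAQ$, which is still linear because $\field{K}$-congruence preserves the degree filtration.

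The hard part, I expect, is making the ``absorb the higher-degree parts of $T$ and $U$ into a $\field{K}$-change of basis'' step precise: a priori the factorization realizing the inner rank need not be linear, and one must show that the intermediate free module of rank $m$ can be re-based over $\freeALG{\field{K}}{X}$ — and in fact over $\field{K}$ — so that the factorization becomes linear without increasing $m$. This is where Cohn's theory of the weak algorithm and the behaviour of full matrices under the degree filtration is really needed, and it is the one genuinely non-elementary input; everything after it is linear algebra over $\field{K}$ together with the pigeonhole count that turns ``rank drop'' into ``oversized zero block''. I would therefore structure the write-up so that this reduction is quoted or cited (it is essentially the content behind \cite[Corollary~6.3.6]{Cohn1995a}), and then give the short, self-contained $\field{K}$-linear-algebra argument producing the hollow form explicitly.
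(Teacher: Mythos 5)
First, a point of reference: the paper does not prove this lemma at all; it is imported verbatim from Cohn (Corollary~6.3.6 of the 1995 book), so your attempt has to be judged against Cohn's argument rather than against anything in the text.

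Judged that way, there is a genuine gap, and it sits precisely in the step you advertise as the ``short, self-contained $\field{K}$-linear-algebra argument''. Suppose you have already arranged $A=TU$ with $T\in\freeALG{\field{K}}{X}^{n\times m}$, $U\in\freeALG{\field{K}}{X}^{m\times n}$, $m<n$, both factors linear. Choosing $P,Q$ over $\field{K}$ that annihilate the constant parts of the last $n-r_T$ rows of $T$ and of the last $n-r_U$ columns of $U$ (where $r_T=\rank T_0$, $r_U=\rank U_0$) does give a zero block in $PAQ$ of size $(n-r_T)\times(n-r_U)$, but hollowness needs $r_T+r_U<n$, and nothing in your argument delivers that: both ranks are only bounded by $m$, which can be $n-1$. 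Your count ``$k+l=2n-r>n$'' silently uses one and the same $r$ on both sides. Already $A=\bigl[\begin{smallmatrix}1&0\\x&0\end{smallmatrix}\bigr]=\bigl[\begin{smallmatrix}1\\x\end{smallmatrix}\bigr]\bigl[\begin{smallmatrix}1&0\end{smallmatrix}\bigr]$ has $r_T=r_U=1$, so your construction certifies only a $1\times1$ zero block ($k+l=n$, not $>n$), even though the matrix trivially has a zero column. What the real proof uses, and your write-up never brings into the counting, is the vanishing of the degree-two component $T_1U_1=0$: writing $T_1=\sum_x T^{(x)}\otimes x$, $U_1=\sum_y U^{(y)}\otimes y$, one gets $T^{(x)}U^{(y)}=0$ for all letters, hence a \emph{constant} change of basis $M\in\field{K}^{m\times m}$ of the intermediate space splits the inner index set: the first $w$ columns of $TM$ and the last $m-w$ rows of $M^{-1}U$ become constant, so $A=bU'+T'c$ with $b\in\field{K}^{n\times w}$ and $c\in\field{K}^{(m-w)\times n}$. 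Killing $b$ in $n-w$ rows and $c$ in $n-(m-w)$ columns then yields a zero block whose dimensions sum to $2n-m>n$, the point being that the two rank defects are tied to \emph{complementary} pieces of the same $m$-dimensional intermediate space, not to the unrelated ranks $r_T,r_U$. (This also shows your alternative normal form ``one factor of degree $0$'' is unattainable in general; only the mixed, split form is.) A secondary issue: the reduction to a linear factorization is indeed the genuinely non-elementary input (weak algorithm / degree reduction over $\freeALG{\field{K}}{X}$), but proposing to cite it as ``essentially the content behind Corollary~6.3.6'' is circular, since that corollary is exactly the statement to be proved; it must be quoted as a separate, earlier result of Cohn.
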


\begin{definition}[Linear Representations, Dimension, Rank
\cite{Cohn1994a,Cohn1999a}% MR1276109 0008-414X
]\label{def:mr.rep}
Let $f \in \field{F}$.
A \emph{linear representation} of $f$ is a triple $\pi_f = (u,A,v)$ with
$u \in \field{K}^{1 \times n}$, full
$A = A_0 \otimes 1 + A_1 \otimes x_1 + \ldots
+ A_d \otimes x_d$, that is, $A$ is invertible over $\field{F}$,
$\forall\,\ell\, A_\ell \in \field{K}^{n\times n}$,
$v \in \field{K}^{n\times 1}$ 
and $f = u A\inv v$.
The \emph{dimension} of $\pi_f$ is $\dim \, (u,A,v) = n$.
It is called \emph{minimal} if $A$ has the smallest possible dimension
among all linear representations of $f$.
The ``empty'' representation $\pi = (,,)$ is
the minimal one of $0 \in \field{F}$ with $\dim \pi = 0$.
Let $f \in \field{F}$ and $\pi$ be a \emph{minimal}
linear representation of $f$.
Then the \emph{rank} of $f$ is
defined as $\rank f = \dim \pi$.
\end{definition}

\begin{remark}
Cohn and Reutenauer define linear representations
slightly more general, namely $f = c + u A\inv v$ with
possibly non-zero $c \in \field{K}$ and call it
\emph{pure} when $c=0$. Two linear representations are called
\emph{equivalent} if they represent the same element
\cite{Cohn1999a}% MR1723470 0218-1967
.
Two (pure) linear representations $(u,A,v)$ and $(\tilde{u},\tilde{A},\tilde{v})$
of dimension $n$ are called \emph{isomorphic} if there exist invertible matrices
$P,Q \in \field{K}^{n \times n}$
such that $u = \tilde{u}Q$, $A = P \tilde{A} Q$ and $v=P\tilde{v}$
\cite{Cohn1999a}% MR1723470 0218-1967
.
\end{remark}

\begin{theorem}[%
\protect{\cite[Theorem 1.4]{Cohn1999a}% MR1723470 0218-1967
}]\label{thr:mr.cohn99.14}
If $\pi' = (u',A',v')$ and $\pi''=(u'',A'',v'')$ are equivalent
(pure) linear representations, of which the first is minimal,
then the second is isomorphic to a representation $\pi = (u,A,v)$
which has the block decomposition
\begin{displaymath}
u =
\begin{bmatrix}
. & u' & * 
\end{bmatrix},\quad
A =
\begin{bmatrix}
* & * & * \\
. & A' & * \\
. & . & * 
\end{bmatrix}
\quad\text{and}\quad
v = 
\begin{bmatrix}
* \\ v' \\ .
\end{bmatrix}.
\end{displaymath}
\end{theorem}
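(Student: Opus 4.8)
The plan is to reduce the statement to a normal-form / pivot argument on the pair of equivalent linear representations. First I would recall that $\pi'$ and $\pi''$ being equivalent means $u'(A')^{-1}v' = u''(A'')^{-1}v''$ in $\field{F}$, and that minimality of $\pi'$ means $A'$ has the smallest possible dimension. The first step is the standard ``doubling'' trick: form the block representation
\begin{displaymath}
\hat u = \begin{bmatrix} u' & -u'' \end{bmatrix},\quad
\hat A = \begin{bmatrix} A' & . \\ . & A'' \end{bmatrix},\quad
\hat v = \begin{bmatrix} v' \\ v'' \end{bmatrix},
\end{displaymath}
which represents $f - f = 0$. So the heart of the matter is a statement about representations of $0$: if $(\hat u,\hat A,\hat v)$ represents $0$, then after multiplying on the left and right by invertible matrices over $\field{K}$ one can expose a block of rows/columns carrying the minimal representation $\pi'$ of $f$ untouched, with everything to the ``left'' and ``below'' being zero in the pattern displayed.

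Next I would use the linear-algebra structure of representations of $0$. Because $\hat A$ is full (being a direct sum of full matrices, or by passing to a full subrepresentation) and $\hat u\hat A^{-1}\hat v = 0$, the vector $\hat A^{-1}\hat v$ is annihilated by $\hat u$; dually $\hat u\hat A^{-1}$ annihilates $\hat v$. Working over $\field{K}$ I would choose a basis adapted to the $\field{K}$-spans of the ``reachable'' and ``controllable'' subspaces associated with $\pi'$ inside the doubled system — concretely, the span of the rows of $u'$ together with the rows of $u'A'_{i_1}\cdots A'_{i_k}$, and dually for $v'$. Minimality of $\pi'$ guarantees these spans have full dimension $n = \dim\pi'$, so I can pick invertible $P,Q$ over $\field{K}$ putting the system into the claimed $3\times 3$ block-triangular shape with $A'$ sitting in the middle diagonal block and the first block-column of $\hat u$ and last block-row of $\hat v$ vanishing there. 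The lower-triangular pattern in $A$ and the zero patterns in $u$ and $v$ then encode precisely that the top block is a ``co-reachable'' part (only feeds into $A'$) and the bottom block is a ``non-controllable'' part (only fed from $A'$), while the middle reproduces $\pi'$ verbatim.

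The main obstacle, and the place where Cohn's theory does real work, is justifying that one can simultaneously realize both the upper-triangular form of $\hat A$ and the matching zero blocks of $\hat u$ and $\hat v$ by a single congruence $P(\cdot)Q$ with $P,Q$ invertible over $\field{K}$ — i.e. that the ``reachability'' and ``controllability'' reductions are compatible and do not disturb the copy of $A'$. Here I would invoke minimality of $\pi'$ once more: any attempt to shrink the middle block would contradict $\dim\pi' = \rank f$, so the middle block must appear with exactly dimension $n$ and, up to isomorphism of representations (the notion recalled in the Remark after Definition~\ref{def:mr.rep}), must be $(u',A',v')$ itself. Finally I would note that the asserted shape is stated only ``up to isomorphism'', so after the $\field{K}$-linear reduction one absorbs the residual invertible change of basis on the middle block into the isomorphism, completing the argument. (Since this theorem is quoted from the literature, the honest thing in the paper is to cite [Cohn1999a, Theorem~1.4]; the sketch above indicates why it holds.)
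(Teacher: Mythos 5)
First, a point of reference: the paper does not prove Theorem~\ref{thr:mr.cohn99.14} at all; it is quoted from Cohn and Reutenauer, so there is no in-paper argument to match. Your sketch does share the flavour of that source's machinery (representations of zero, i.e.\ Lemma~\ref{lem:mr.cohn99.12}-type decompositions, plus minimality), so the strategy is not unreasonable --- but as written it has a genuine gap at the decisive step.

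The doubling trick produces a representation $\hat{\pi}=\bigl([u',-u''],\,A'\oplus A'',\,[v';v'']\bigr)$ of $0$ of dimension $n'+n''$, and every subsequent step in your sketch (the hollow decomposition, the adapted bases, the $3\times3$ shape) is applied to $\hat{\pi}$. The theorem, however, asserts the existence of invertible $P,Q\in\field{K}^{n''\times n''}$ acting on $\pi''$ \emph{alone} such that $P\pi''Q$ has the displayed form with the block $(u',A',v')$ sitting untouched in the middle. Transformations of $\hat{\pi}$ mix the two diagonal summands, so exhibiting $\pi'$ inside a transform of $\hat{\pi}$ is a strictly weaker (``stable'') statement, and you never explain how to descend from it to $\pi''$; the sentence ``minimality of $\pi'$ guarantees these spans have full dimension, so I can pick invertible $P,Q$ putting the system into the claimed shape'' is an assertion, not an argument, and it is precisely where the work lies. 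Two further points need repair: (i) the spans of $u'A'_{i_1}\cdots A'_{i_k}$ are the power-series-style reachability spaces; in the free field an element need not be regular, one cannot normalize $A'_0=I$, so the products must include the constant coefficient matrix $A'_0$ (otherwise full-dimensionality can fail), and the tool actually adapted to this setting is $\field{K}$-linear (in)dependence of the left and right families $s=A\inv v$ and $t=uA\inv$ over $\field{F}$ (Proposition~\ref{pro:mr.cohn94.47}); (ii) your closing appeal that the middle block ``must be $(u',A',v')$ itself up to isomorphism'' presupposes both that the middle block, with the adjacent zero patterns already in place, is a minimal representation of $f$ and that minimal representations are unique up to isomorphism --- a fact intertwined with this very theorem in Cohn--Reutenauer's development, so it must be cited independently (from the 1994 normal-form paper) or proved, on pain of circularity.
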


\begin{remark}
In principle, given $\pi''$ of dimension $n$,
one can look for invertible matrices $P,Q$
such that $P\pi''Q = (u''Q, PA''Q, Pv'')$
has the form of $\pi$ to minimize $\pi''$.
However, for an alphabet with $d$ letters and
a lower left block of zeros of size $k \times (n-k)$ we
would get $(d+1)k(n-k) +k$ \emph{polynomial} equations
with at most quadratic terms and two equations of
degree $n$ (to ensure invertibility of the transformation matrices)
in $2n^2$ \emph{commuting} unknowns. This is already
rather challenging for $n=5$. The goal is therefore
to use linear techniques as far as possible.
\end{remark}

\begin{definition}[Left and Right Families
\cite{Cohn1994a}% MR1276109 0008-414X
]\label{def:mr.family}
Let $\pi=(u,A,v)$ be a linear representation of $f \in \field{F}$
of dimension $n$.
The families $( s_1, s_2, \ldots, s_n )\subseteq \field{F}$
with $s_i = (A\inv v)_i$
and $( t_1, t_2, \ldots, t_n )\subseteq \field{F}$
with $t_j = (u A\inv)_j$
are called \emph{left family} and \emph{right family} respectively.
$L(\pi) = \linsp \{ s_1, s_2, \ldots, s_n \}$ and
$R(\pi) = \linsp \{ t_1, t_2, \ldots, t_n \}$
denote their linear spans (over $\field{K}$).
\end{definition}

\begin{proposition}[%
\protect{\cite[Proposition~4.7]{Cohn1994a}% MR1276109 0008-414X
}]\label{pro:mr.cohn94.47}
A representation $\pi=(u,A,v)$ of an element $f \in \field{F}$
is minimal if and only if both, the left family
and the right family are $\field{K}$-linearly independent.
In this case, $L(\pi)$ and $R(\pi)$ depend only on $f$.
\end{proposition}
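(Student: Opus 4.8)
\emph{Proof plan.} I would prove the two implications separately and then read off the final assertion.

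\emph{If both families are $\field{K}$-linearly independent, then $\pi$ is minimal.} Fix a minimal representation $\pi_0=(u_0,A_0,v_0)$ of $f$, of dimension $n_0=\rank f$. Since $\pi$ and $\pi_0$ are equivalent (pure) representations and $\pi_0$ is minimal, Theorem~\ref{thr:mr.cohn99.14} produces a representation $\hat\pi=(\hat u,\hat A,\hat v)$ isomorphic to $\pi$ and carrying the block decomposition displayed there, with outer blocks of sizes $p$ and $q$ (so $p+n_0+q=\dim\pi$). An isomorphism of representations multiplies the left family $A\inv v$ on the left, and the right family $uA\inv$ on the right, by matrices invertible over $\field{K}$; it therefore preserves $\field{K}$-linear independence of these families, and I may replace $\pi$ by $\hat\pi$. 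Now $\hat A$ is full, hence invertible over $\field{F}$, and, being block upper triangular over the (skew) field $\field{F}$, it has all diagonal blocks invertible over $\field{F}$ as well (a singular diagonal block would, by the triangular shape, produce a nonzero vector in the left or right kernel of $\hat A$); consequently $\hat A\inv$ is again block upper triangular with the same pattern, with diagonal blocks the inverses of those of $\hat A$. Since the last block of $\hat v$ vanishes, the left family $\hat A\inv\hat v$ has its last $q$ entries equal to $0$; since the first block of $\hat u$ vanishes, the right family $\hat u\hat A\inv$ has its first $p$ entries equal to $0$. Independence of both families now forces $p=q=0$, i.e.\ $\dim\pi=n_0$, so $\pi$ is minimal.

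\emph{If $\pi$ is minimal, then both families are independent.} I would argue the contrapositive. Suppose, say, the left family of $\pi=(u,A,v)$ is $\field{K}$-linearly dependent and put $r:=\dim L(\pi)<n:=\dim\pi$. Conjugating $A$ by a suitable matrix invertible over $\field{K}$ and transforming $u,v$ accordingly --- this keeps $v$ over $\field{K}$ and does not change the represented element --- I may assume that $s:=A\inv v$ satisfies $s_{r+1}=\ldots=s_n=0$ while $s_1,\ldots,s_r$ form a $\field{K}$-basis of $L(\pi)$. Partition $A=\bigl[\begin{smallmatrix}A_{11}&A_{12}\\A_{21}&A_{22}\end{smallmatrix}\bigr]$, $v=\bigl[\begin{smallmatrix}v_1\\v_2\end{smallmatrix}\bigr]$, $u=\bigl[\begin{smallmatrix}u_1&u_2\end{smallmatrix}\bigr]$ conformally with $s=\bigl[\begin{smallmatrix}\sigma\\0\end{smallmatrix}\bigr]$. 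Then $As=v$ yields $A_{11}\sigma=v_1$, and $f=us=u_1\sigma$; hence, \emph{provided $A_{11}$ is full}, $\sigma=A_{11}\inv v_1$ and $(u_1,A_{11},v_1)$ is a linear representation of $f$ of dimension $r<n$, contradicting minimality. The case of a dependent right family is dual (normalize $uA\inv$ to have $n-\dim R(\pi)$ leading zero entries and reduce to the corresponding bottom-right submatrix).

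The one genuinely non-formal point here is the fullness of the reduced matrix $A_{11}$: an $r\times r$ submatrix of a full matrix need not be full, so this has to be extracted from the remaining structure --- the entries of $A$ are linear, $v$ is over $\field{K}$, and $s_1,\ldots,s_r$ are \emph{genuinely} $\field{K}$-independent. I would handle it through the full/hollow dichotomy: were $A_{11}$ non-full, then (being linear) it would be $\field{K}$-associated to a hollow matrix by Lemma~\ref{lem:mr.cohn95.636}; absorbing the associating $\field{K}$-transformations into further changes of basis supported on the first $r$ rows and columns --- which preserve all the normalizations above --- one forces into $A_{11}$ a zero block of size $k\times l$ with $k+l\ge r+1$, and this configuration, together with $A_{11}\sigma=v_1$ and the independence of $\sigma_1,\ldots,\sigma_r$, is of the kind Cohn's theory excludes for a full $A$. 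Making this incompatibility precise is, I expect, the main obstacle; the rest is bookkeeping.

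Finally, the first implication already shows that any two minimal representations of $f$ are isomorphic (the outer block sizes of Theorem~\ref{thr:mr.cohn99.14} must vanish), and an isomorphism transforms the left and right families by matrices invertible over $\field{K}$, hence leaves their $\field{K}$-linear spans unchanged. Therefore $L(\pi)=L(\pi_0)$ and $R(\pi)=R(\pi_0)$ for the fixed minimal $\pi_0$, so both spans depend only on $f$.
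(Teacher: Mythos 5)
First, a remark on context: the paper does not prove Proposition~\ref{pro:mr.cohn94.47} at all --- it is quoted from Cohn--Reutenauer --- so the comparison below is with the standard argument and with the closely related machinery the paper does prove (Theorem~\ref{thr:mr.lmin}, Lemma~\ref{lem:mr.cohn99.12}). Your first implication and the final ``depend only on $f$'' statement are sound and follow the expected route through Theorem~\ref{thr:mr.cohn99.14}. The genuine gap is in the second implication, and it is exactly the point you flagged yourself: the fullness of $A_{11}$. This is not a missing piece of bookkeeping that can be ``extracted from the remaining structure'', because the statement you hope to force there is false. Take $u=[1,0]$, $A=\bigl[\begin{smallmatrix}0&1\\1&-x\end{smallmatrix}\bigr]$, $v=[0,1]\trp$. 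Then $A$ is full (indeed $A\inv=\bigl[\begin{smallmatrix}x&1\\1&0\end{smallmatrix}\bigr]$), $s=A\inv v=[1,0]\trp$, so $f=1$, the left family is dependent, $r=1$, and your normalization already holds with $\sigma=(1)$ having $\field{K}$-independent entries; yet $A_{11}=[0]$ is not full, and no transformation supported on the first $r$ rows and columns can change this. So ``$A$ full, $A_{11}\sigma=v_1$, $\sigma$ independent, $A_{11}$ non-full'' is a perfectly possible configuration, and the contradiction you want to manufacture via Lemma~\ref{lem:mr.cohn95.636} does not exist. The structural defect of the plan is that you keep the \emph{same} index set for rows as for columns; in general the $r$ rows that must be retained are not the first $r$ (in the example one must keep row~$2$, not row~$1$).

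The standard repair is a \emph{two-sided} reduction in which the discarded part is split off block-triangularly, so that fullness of the retained diagonal block comes for free instead of having to be proved. Concretely: a $\field{K}$-dependence of the left family means $\lambda s=0$ for some $0\neq\lambda\in\field{K}^{1\times n}$, i.e.\ $(\lambda,A,v)$ is a linear representation of $0$. Lemma~\ref{lem:mr.cohn99.12} gives invertible $P,Q$ over $\field{K}$ with $\lambda Q=[\tilde\lambda_{1},0]$, $PAQ=\bigl[\begin{smallmatrix}\tilde A_{11}&0\\\tilde A_{21}&\tilde A_{22}\end{smallmatrix}\bigr]$ and $Pv=[0,\tilde v_{2}]\trp$; since $\lambda\neq 0$ the first block is nonempty, so $\tilde A_{22}$ has size $<n$. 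Because $(PAQ)\inv$ is again lower block triangular, $f=u A\inv v=u_2'\,\tilde A_{22}\inv\,\tilde v_{2}$, where $u_2'$ is the second block of $uQ$, and $\tilde A_{22}$ is automatically full (a non-full diagonal block of a block-triangular linear matrix factors the whole matrix non-trivially, contradicting fullness of $PAQ$). Hence $f$ has a representation of dimension $<n$ and $\pi$ is not minimal; the dependent-right-family case is dual. This is precisely the mechanism the paper itself uses in the proof of Theorem~\ref{thr:mr.lmin}, and it is what your sketch is missing. (A small further adjustment in your first implication: for the \emph{middle} diagonal block of $\hat A$ a singular block gives a kernel vector only after correcting the outer coordinates, but that is routine.)
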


\begin{definition}[Element Types
\cite{Schrempf2017c9}% IEJA001 1306-6048
]\label{def:mr.typele}
An element $f \in \field{F}$ is called \emph{of type} $(1,*)$
(respectively $(0,*)$) if $1 \in R(f)$, that is, $1 \in R(\pi)$
for some \emph{minimal} linear representation $\pi$ of $f$,
(respectively $1 \notin R(f)$).
It is called \emph{of type} $(*,1)$ (respectively $(*,0)$)
if $1 \in L(f)$ (respectively $1 \notin L(f)$).
Both subtypes can be combined.
\end{definition}

\begin{remark}
The following definition is a special case
of Cohn's more general \emph{admissible systems}
\cite[Section~7]{Cohn2006a}% MR2246388 book
\ and the slightly more general \emph{linear representations}
\cite{Cohn1994a}% MR1276109 0008-414X
.
\end{remark}

\begin{definition}[Admissible Linear Systems
\cite{Schrempf2017a9}% IJAC2018 0218-1967
]\label{def:mr.als}
A linear representation $\als{A} = (u,A,v)$ of $f \in \field{F}$
is called \emph{admissible linear system} (ALS) for $f$
if $u=e_1=[1,0,\ldots,0]$.
The element $f$ is then the first component
of the (unique) solution vector $s = A\inv v$.
An ALS is also written as $A s = v$,
or if $v = [0,\ldots,0,\lambda]$ as $(1,A,\lambda)$.
Given a linear representation $\als{A} = (u,A,v)$
of dimension $n$ of $f \in \field{F}$
and invertible matrices $P,Q \in \field{K}^{n\times n}$,
the transformed $P\als{A}Q = (uQ, PAQ, Pv)$ is
again a linear representation (of $f$).
If $\als{A}$ is an ALS,
the transformation $(P,Q)$ is called
\emph{admissible} if the first row of $Q$ is $e_1 = [1,0,\ldots,0]$.
\end{definition}

\begin{remark}
The left family $(A\inv v)_i$ (respectively the right family $(u A\inv)_j$)
and the solution vector $s$ of $As = v$ (respectively $t$ of $u = tA$)
are used synonymously.
\end{remark}

Transformations can be done by elementary row- and column operations,
explained in detail in
\cite[Remark~1.12]{Schrempf2017a9}% MR3864857 0218-1967
. For further remarks and connections to the related concepts
of linearization and realization see
\cite[Section~1]{Schrempf2017a9}% MR3864857 0218-1967
. 

For elements in the free associative algebra $\freeALG{\field{K}}{X}$
a special form (with an upper unitriangular system matrix) can be used.
It plays a crucial role in the factorization of polynomials
because it allows to formulate a minimal polynomial multiplication
(Proposition~\ref{pro:mr.minmul}) and \emph{upper unitriangular}
transformation matrices (invertible by definition) suffice to find
all possible factors (up to trivial units).
For details we refer to 
\cite[Section~2]{Schrempf2017b9}% JSC2018 0747-7171
.

\begin{Remark}
While it was intended in 
\cite{Schrempf2017b9}% JSC2018 0747-7171
\ to derive a ``better'' standard form including knowledge about
the factorization it turned out later that this is not that easy in the
general case because of the necessity to distinguish several cases in
the multiplication 
\cite[Section~5]{Schrempf2017c9}% IEJA001 1306-6048
. The term ``pre-standard ALS'' (for polynomials) is now replaced
by \emph{polynomial ALS} (which is just a special form of a \emph{refined} ALS,
Section~\ref{sec:mr.standardform}). And a \emph{minimal} polynomial ALS
is already in \emph{standard form}.
Especially to avoid confusion with special transformation matrices
for the factorization everything is put into a uniform context
in \cite{Schrempf2018b}% TH105 thesis
. For an overview see also 
\cite[Figure~1]{Schrempf2018c2}% X180905425 arxiv
.
There are only some minor changes in the formulation of results and proofs
necessary, for example to construct the product in
\cite[Lemma~39]{Schrempf2017b9}% JSC2018 0747-7171
\ using $\frac{\lambda_2}{\lambda} \als{A}_1$ and
$\frac{\lambda}{\lambda_2}\als{A}_2$.
\end{Remark}

\begin{definition}[Polynomial ALS and Transformation
\cite{Schrempf2017b9}% JSC2018 0747-7171
]\label{def:mr.psals}
An ALS $\als{A} = (u,A,v)$ of dimension $n$
with system matrix $A = (a_{ij})$
for a non-zero polynomial $0 \neq p \in \freeALG{\field{K}}{X}$ 
is called
\emph{polynomial}, if
\begin{itemize}
\item[(1)] $v = [0,\ldots,0,\lambda]\trp$ for some $\lambda \in\field{K}$ and
\item[(2)] $a_{ii}=1$ for $i=1,2,\ldots, n$ and $a_{ij}=0$ for $i>j$,
  that is, $A$ is upper triangular.
\end{itemize}
An admissible transformation $(P,Q)$ for an ALS $\als{A}$
is called \emph{polynomial} if it has the form
\begin{displaymath}
(P,Q) = \left(
\begin{bmatrix}
1 & \alpha_{1,2} & \ldots & \alpha_{1,n-1} & \alpha_{1,n} \\
  & \ddots & \ddots & \vdots & \vdots \\
  &   & 1 & \alpha_{n-2,n-1} & \alpha_{n-2,n} \\
  &   &   & 1 & \alpha_{n-1,n} \\
  &   &   &   & 1
\end{bmatrix},
\begin{bmatrix}
1 & 0 & 0 & \ldots & 0 \\
  & 1 & \beta_{2,3} & \ldots & \beta_{2,n} \\
  &   & 1 & \ddots & \vdots  \\
  &   &   & \ddots & \beta_{n-1,n} \\
  &   &   &   & 1 \\
\end{bmatrix}
\right).
\end{displaymath}
If additionally $\alpha_{1,n} = \alpha_{2,n} = \ldots = \alpha_{n-1,n} = 0$
then $(P,Q)$ is called \emph{polynomial factorization transformation}.
\end{definition}

\begin{definition}\label{def:mr.reg}
Let $M = M_1 \otimes x_1 + \ldots + M_d \otimes x_d$
with $M_i \in \field{K}^{n \times n}$ for some $n\in \numN$.
An element in $\field{F}$ is called \emph{regular}
if it has a linear representation $(u,A,v)$ with $A = I - M$,
that is, $A_0 = I$ in Definition~\ref{def:mr.rep},
or equivalently, if $A_0$ is regular (invertible).
\end{definition}

\section{Rational Operations}\label{sec:mr.ratop}

Usually we want to construct \emph{minimal} admissible linear
systems (out of minimal ones), that is,
perform ``minimal'' rational operations.
Minimal scalar multiplication is trivial.
In some special cases \emph{minimal multiplication}
or even \emph{minimal addition} (if two elements are disjoint
\cite{Cohn1999a}% MR1723470 0218-1967
)
can be formulated (Proposition~\ref{pro:mr.minmul} or
\cite[Theorem~5.2]{Schrempf2017c9}% IEJA001 1306-6048
\ and 
\cite[Proposition~3.5]{Schrempf2017c9}% IEJA001 1306-6048
). For the \emph{minimal inverse} we have to distinguish four
cases, which are summarized in Theorem~\ref{thr:mr.mininv}.
In general we have to \emph{minimize} admissible linear systems.
This will be the main goal of the following sections.

\begin{proposition}[Minimal Monomial
\protect{\cite[Proposition~4.1]{Schrempf2017a9}% MR3864857 0218-1967
}]\label{pro:mr.minmon}
Let $k \in \numN$ and $f= x_{i_1} x_{i_2} \cdots x_{i_k}$ be a monomial in
$\freeALG{\field{K}}{X} \subseteq \freeFLD{\field{K}}{X}$.
Then
\begin{displaymath}
\als{A} = \left(
\begin{bmatrix}
1 & .  & \cdots & .
\end{bmatrix},
\begin{bmatrix}
1 & -x_{i_1} \\
  & 1 & -x_{i_2} \\
  & & \ddots & \ddots \\
  & & & 1 & -x_{i_k} \\
  & & & & 1
\end{bmatrix},
\begin{bmatrix}
. \\ .  \\ \vdots \\ .  \\ 1
\end{bmatrix}
\right)
\end{displaymath}
is a \emph{minimal} polynomial ALS of dimension $\dim(\als{A}) = k+1$.
\end{proposition}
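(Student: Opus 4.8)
The plan is to verify the two conditions in Definition~\ref{def:mr.als} together with minimality, where the latter is checked via Proposition~\ref{pro:mr.cohn94.47} by exhibiting the left and right families explicitly. First I would note that $\als{A}$ is of the claimed dimension $n = k+1$, that $u = e_1$ so the system is admissible, and that the system matrix $A$ is upper unitriangular with $v = [0,\ldots,0,1]\trp$, so once we know it is a linear representation it is automatically a \emph{polynomial} ALS in the sense of Definition~\ref{def:mr.psals}. The matrix $A$ is linear in the generators and upper triangular with $1$'s on the diagonal, hence its determinant is $1$ and $A$ is invertible over $\freeALG{\field{K}}{X}$ (in particular full, so the representation condition of Definition~\ref{def:mr.rep} holds).

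The core computation is solving $A s = v$ by back-substitution. Writing $s = (s_1,\ldots,s_{k+1})\trp$, the last equation gives $s_{k+1}=1$; the $j$-th equation (for $j = k, k-1,\ldots,1$) reads $s_j - x_{i_j} s_{j+1} = 0$, so inductively $s_{k+1-m} = x_{i_{k+1-m}} x_{i_{k+2-m}} \cdots x_{i_k}$ for $m = 0,1,\ldots,k$, and in particular $s_1 = x_{i_1} x_{i_2}\cdots x_{i_k} = f$, as required. Dually, solving $u = t A$ by forward-substitution gives $t_1 = 1$ and $t_j = x_{i_1} x_{i_2}\cdots x_{i_{j-1}}$ for $j = 2,\ldots,k+1$; so the right family is $(1, x_{i_1}, x_{i_1}x_{i_2}, \ldots, x_{i_1}\cdots x_{i_k})$ and the left family is $(x_{i_1}\cdots x_{i_k}, x_{i_2}\cdots x_{i_k}, \ldots, x_{i_k}, 1)$.

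For minimality I would invoke Proposition~\ref{pro:mr.cohn94.47}: it suffices to show each family is $\field{K}$-linearly independent. Both families consist of $k+1$ monomials (including the empty word $1$) that are pairwise distinct as elements of the free monoid $X^*$ — in the right family they have strictly increasing lengths $0,1,\ldots,k$, and likewise in the left family strictly decreasing lengths $k,k-1,\ldots,0$. Since distinct words in $X^*$ are $\field{K}$-linearly independent in $\freeALG{\field{K}}{X}$, each family is independent, so $\als{A}$ is minimal.

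The only mildly delicate point is that the back-substitution identifies $s_1$ with the \emph{element of the free field} named in the statement rather than merely with a formal expression; this is immediate here because $A$ is invertible already over $\freeALG{\field{K}}{X}$, so $s = A\inv v$ lies in $\freeALG{\field{K}}{X}^{n\times 1}$ and the computation takes place entirely inside the polynomial ring. Strictly speaking the distinctness-of-words argument (hence linear independence) is the substantive step, but it is standard; everything else is bookkeeping. Thus $\als{A}$ is a minimal polynomial ALS of dimension $k+1$ for $f$. $\qed$
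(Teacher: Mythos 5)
Your proof is correct and follows essentially the same route as the cited source (the paper itself only quotes \cite[Proposition~4.1]{Schrempf2017a9} without reproducing the argument): solve $As=v$ and $tA=u$ explicitly, observe that the left and right families are the suffix- and prefix-words of $f$, which are pairwise distinct words and hence $\field{K}$-linearly independent, and conclude minimality via Proposition~\ref{pro:mr.cohn94.47}. One cosmetic point: avoid justifying fullness via ``the determinant is $1$'' (determinants are not available over $\freeALG{\field{K}}{X}$); instead note that $A=I-N$ with $N$ strictly upper triangular, hence nilpotent, so $A$ is invertible over $\freeALG{\field{K}}{X}$ itself and in particular full.
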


\begin{proposition}[Rational Operations
\cite{Cohn1999a}% MR1723470 0218-1967
]\label{pro:mr.ratop}
Let $0\neq f,g \in \field{F}$ be given by the
admissible linear systems $\als{A}_f = (u_f, A_f, v_f)$
and $\als{A}_g = (u_g, A_g, v_g)$ respectively
and let $0\neq \mu \in \field{K}$.
Then admissible linear systems for the rational operations
can be obtained as follows:

\smallskip\noindent
The scalar multiplication
$\mu f$ is given by
\begin{displaymath}
\mu \als{A}_f =
\bigl( u_f, A_f, \mu v_f \bigr).
\end{displaymath}
The sum $f + g$ is given by
\begin{displaymath}
\als{A}_f + \als{A}_g =
\left(
\begin{bmatrix}
u_f & . 
\end{bmatrix},
\begin{bmatrix}
A_f & -A_f u_f\trp u_g \\
. & A_g
\end{bmatrix}, 
\begin{bmatrix} v_f \\ v_g \end{bmatrix}
\right).
\end{displaymath}
The product $fg$ is given by
\begin{displaymath}
\als{A}_f \cdot \als{A}_g =
\left(
\begin{bmatrix}
u_f & . 
\end{bmatrix},
\begin{bmatrix}
A_f & -v_f u_g \\
. & A_g
\end{bmatrix},
\begin{bmatrix}
. \\ v_g
\end{bmatrix}
\right).
\end{displaymath}
And the inverse $f\inv$ is given by
\begin{displaymath}
\als{A}_f\inv =
\left(
\begin{bmatrix}
1 & . 
\end{bmatrix},
\begin{bmatrix}
-v_f & A_f \\
. & u_f
\end{bmatrix},
\begin{bmatrix}
. \\ 1
\end{bmatrix}
\right).
\end{displaymath}
\end{proposition}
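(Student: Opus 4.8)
The plan is to go through the four constructions one at a time and, for each, check that the displayed triple is again an \emph{admissible} linear system and that $uA\inv v$ equals the asserted element of $\field{F}$. Admissibility needs two things. First, $u=e_1$: this holds by inspection, since $[\,u_f\mid.\,]=[1,0,\ldots,0]$ whenever $u_f=e_1$, and the inverse is already written with $u=[\,1\mid.\,]$; scalar multiplication does not change $u$ at all. Second, the system matrix must be full, i.e.\ invertible over $\field{F}$; all four matrices are visibly linear in $X$, so this is the only non-formal part of the admissibility check.

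For fullness: the matrix of $\mu\als{A}_f$ is $A_f$ itself. For the sum and the product the matrix is block upper triangular with diagonal blocks $A_f$ and $A_g$, both invertible over $\field{F}$ by hypothesis, hence it is invertible over $\field{F}$ (with the evident block upper triangular inverse). For the inverse, the matrix $B=\bigl[\begin{smallmatrix}-v_f & A_f\\ . & u_f\end{smallmatrix}\bigr]$ is the one place where the hypothesis $f\neq0$ is used: solving $Bw=0$ with $w=(w_0,w')$ gives $w'=A_f\inv v_f\,w_0$ from the top block rows, and then the last row gives $u_fA_f\inv v_f\,w_0=f\,w_0=0$, so $w_0=0$ (as $\field{F}$ is a skew field) and $w=0$; thus $B$ is invertible over $\field{F}$. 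I expect this to be the main point to get right --- being careful that $f,g\neq0$ is used exactly where needed and nowhere else.

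For the evaluation I would not invert the system matrix but solve $As=v$ by block back-substitution and read off the first component of $s$, which is $uA\inv v$ because $u=e_1$. For the product, split $s=(s',s'')$ along the block structure: the bottom block gives $s''=A_g\inv v_g$, and the top block gives $A_fs'=v_f(u_gA_g\inv v_g)=v_fg$, so $s'=(A_f\inv v_f)g$ (re-associating the right factor) and $u_fs'=(u_fA_f\inv v_f)g=fg$. For the sum the same two steps give $A_fs'=v_f+A_fu_f\trp(u_gA_g\inv v_g)=v_f+A_fu_f\trp g$, hence $s'=A_f\inv v_f+u_f\trp g$ and $u_fs'=f+(u_fu_f\trp)g$; this is $f+g$ precisely because $u_f=e_1$ makes the scalar $u_fu_f\trp$ equal to $1$ --- the only spot where the ALS normalization is used, and where the displayed block matrix replaces the plain direct sum $[\,u_f\mid u_g\,]$ of a general linear representation. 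For the inverse, writing $s=(s_0,s')$ with $s_0\in\field{F}$ a scalar, the top block rows give $A_fs'=v_fs_0$ and the last row gives $u_fs'=1$, whence $f\,s_0=u_fA_f\inv v_f\,s_0=1$ and $s_0=f\inv=uA\inv v$. Scalar multiplication is immediate from $uA\inv(\mu v)=\mu f$. All identities are essentially those of \cite{Cohn1999a}; apart from the two highlighted points the argument is bookkeeping.
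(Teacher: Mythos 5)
Your verification is correct, and it is essentially the standard argument: the paper itself offers no proof of this proposition (it is quoted from Cohn's work), and the checks you give --- block back-substitution to read off $uA\inv v$ for each construction, fullness of the sum/product matrices from block upper triangularity, and the trivial-kernel argument over $\field{F}$ (using $f\neq 0$) for the inverse matrix --- are exactly what the cited proof amounts to. Your two highlighted points are well placed: the normalization $u_f=e_1$ is indeed what makes $u_fu_f\trp=1$ in the sum, and since the paper's Definition~\ref{def:mr.rep} takes ``full'' to mean invertible over $\field{F}$, your invertibility checks suffice for admissibility.
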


Since we need alternative constructions
(to that in Proposition~\ref{pro:mr.ratop})
for the product we state them here
in Propositions~\ref{pro:mr.mul2} and~\ref{pro:mr.mul1}.
Before, we need some technical results from
\cite{Schrempf2017a9}% MR3864857 0218-1967
\ and
\cite{Schrempf2017b9}% JSC2018 0747-7171
. However these are rearranged such that similarities become
more obvious and the flexibility in applications is increased.
In particular one can proof Lemma~\ref{lem:mr.forL} by applying
Lemma~\ref{lem:mr.rt1},
see
\cite[Lemma~3.8]{Schrempf2017c9}% IEJA001 1306-6048
.

\begin{lemma}[%
\protect{\cite[Lemma~25]{Schrempf2017b9}% JSC2018 0747-7171
}]\label{lem:mr.rt1}
Let $\als{A} = (u,A,v)$ be an ALS
of dimension $n \ge 1$ with $\field{K}$-linearly independent
left family $s = A\inv v$ and
$B = B_0 \otimes 1 + B_1 \otimes x_1 + \ldots + B_d \otimes x_d$
with $B_\ell\in \field{K}^{m\times n}$, such that
$B s = 0$. Then there exists a (unique) $T \in \field{K}^{m \times n}$
such that $B = TA$.
\end{lemma}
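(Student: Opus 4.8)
The plan is to exploit the hypothesis that the left family $s = A^{-1}v$ is $\field{K}$-linearly independent, which means the $n$ components $s_1,\dots,s_n$ of $s$ form a $\field{K}$-basis of the linear span $L = L(\als{A}) \subseteq \field{F}$. Since $B s = 0$ is an identity in $\field{F}^{m\times 1}$, I would first expand it coefficient-wise. Writing $B = B_0\otimes 1 + \sum_{\ell=1}^d B_\ell\otimes x_\ell$, the product $Bs$ is a vector whose $i$-th entry is $\sum_{j} (B_0)_{ij} s_j + \sum_{\ell}\sum_j (B_\ell)_{ij}\, x_\ell s_j$. The key observation is that each $x_\ell s_j$ again lies in $L$: indeed from $A s = v$ we get $s = v - M s$ where $A = I\!\cdot\! (\text{the }A_0\text{-part}) - \dots$; more precisely, using $A = A_0\otimes 1 + \sum_\ell A_\ell\otimes x_\ell$ and $As=v$, one solves for the ``$x_\ell s$'' terms and sees that the whole vector $s$ together with all $x_\ell s_j$ stays inside the finite-dimensional space $L$ spanned by $s_1,\dots,s_n$ (this is the standard fact that $L$ is a left $\field{K}$-subspace closed in the appropriate sense, underlying Cohn--Reutenauer's theory).

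Next I would set $T := B_0 + B_1\,(\text{coordinates of }x_1 s\text{ in the basis }s) + \cdots$ — concretely, write $x_\ell s_j = \sum_k c^{(\ell)}_{jk} s_k$ with $c^{(\ell)}_{jk}\in\field{K}$, collect these into matrices $C_\ell = (c^{(\ell)}_{jk})\in\field{K}^{n\times n}$, so that $x_\ell s = C_\ell s$, and then observe $Bs = (B_0 + \sum_\ell B_\ell C_\ell)\, s$. Setting $T := B_0 + \sum_{\ell=1}^d B_\ell C_\ell \in \field{K}^{m\times n}$, the equation $Bs=0$ becomes $Ts = 0$; since $s_1,\dots,s_n$ are $\field{K}$-linearly independent and $T$ has scalar entries, this forces $T = 0$. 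But wait — that shows $B_0 = -\sum_\ell B_\ell C_\ell$, which is not yet the claim $B = TA$. So the correct route is slightly different: I should instead observe that $B - TA$ (for the right choice of $T$) annihilates $s$ with \emph{zero constant term}, and then iterate, or more cleanly, argue directly as follows.

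The clean argument: consider the $\field{K}$-linear map sending a scalar matrix row to the corresponding element of $\field{F}$ via pairing with $s$; since $A s = v$ and $s$ is a basis of $L$, the rows of $A$ are ``coordinates'' and any linear-in-$X$ matrix $B$ with $Bs$ expressible through $s$ with scalar coefficients must factor as $TA$ where $T$ records how $Bs$ expands in terms of $v$ and the $x_\ell s_j$; the precise bookkeeping uses that $\{s_j\} \cup \{x_\ell s_j\}$ spans a space in which $v$ and the rows of $A$ give a nondegenerate pairing. Uniqueness of $T$ is immediate: if $T A = T' A$ then $(T-T')A = 0$, and since $A$ is invertible over $\field{F}$ and $T-T'$ has entries in $\field{K}\subseteq\field{F}$, we get $T = T'$.

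The main obstacle — and the part deserving care — is making rigorous the claim that $Bs$, which a priori lives in $\field{F}^{m}$, actually has all its coordinates in the finite-dimensional space $L = \linsp\{s_1,\dots,s_n\}$, i.e.\ that $x_\ell s_j \in L$ for all $\ell, j$. This is exactly where the structure ``$s = A^{-1}v$ with $A$ linear in $X$'' is used, and it is the non-formal step; everything after it is linear algebra over $\field{K}$. I expect the author's proof to either invoke this closure property as known from the cited companion paper \cite{Schrempf2017a9} or to derive it in one line from $As = v$ by isolating the terms $A_\ell\otimes x_\ell$ acting on $s$. Once that is in hand, defining $T$ via the coordinate matrices $C_\ell$ and reading off $B = TA$ is routine, and uniqueness follows from invertibility of $A$ over $\field{F}$ as above.
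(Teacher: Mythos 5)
Your argument hinges on the claim that $x_\ell s_j \in L = \linsp\{s_1,\ldots,s_n\}$ for all $\ell,j$, and this claim is false. Already for the ALS $([1],[1-x],[1])$ of $(1-x)\inv$ the left family $s_1=(1-x)\inv$ is $\field{K}$-linearly independent, yet $x s_1 = s_1 - 1 \notin L$ because $1\notin L$. Even the weaker closure $x_\ell s_j\in L+\field{K}$ fails in general: for the minimal ALS of $(1-xy)\inv$ with left family $s_1=(1-xy)\inv$, $s_2=y(1-xy)\inv$, the element $xs_1$ has $x$ as its homogeneous component of degree one, which no $\field{K}$-combination of $1,s_1,s_2$ can produce. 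The relation $As=v$ only tells you that the $n$ particular combinations $\sum_j (A_0)_{ij}s_j+\sum_{\ell,j}(A_\ell)_{ij}\,x_\ell s_j$ are scalars; it does not confine the individual products $x_\ell s_j$ to $L$. (The invariance underlying the Cohn--Reutenauer theory concerns shift/transduction-type operators, not multiplication by letters.)

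Consequently the existence part of the lemma --- its entire content --- is not established. You noticed yourself that, even granting the closure, your matrices $C_\ell$ only give $B_0=-\sum_\ell B_\ell C_\ell$ rather than $B=TA$; and the subsequent ``clean argument'' replaces the missing step by the unproved assertion of a ``nondegenerate pairing'', which is exactly the lemma restated: one must show that every $\field{K}$-linear relation $\sum_j\beta_{0,j}s_j+\sum_{\ell,j}\beta_{\ell,j}\,x_\ell s_j=0$ is a left $\field{K}$-linear combination of the $n$ relations recorded by the rows of $A$ together with $v$. Only your uniqueness argument (cancelling $A$ over $\field{F}$) is correct. Note also that this paper does not prove the statement itself but imports it from \cite[Lemma~25]{Schrempf2017b9}; any correct proof has to use the linearity of $A$ and the independence of $s$ more substantially than linear algebra inside $L$, for instance by bordering the system $As=v$ with the rows of $B$ and invoking the fullness/hollowness machinery quoted here as Lemma~\ref{lem:mr.cohn95.636} and Lemma~\ref{lem:mr.cohn99.12}, rather than a closure property of $L$.
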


\begin{lemma}[for Type~$(*,1)$
\protect{\cite[Lemma~4.11]{Schrempf2017a9}% MR3864857 0218-1967
}]\label{lem:mr.forL}
Let $\als{A} = (u,A,v)$ be a \emph{minimal} ALS
with $\dim \als{A} =n \ge 2$ and $1\in L(\als{A})$. Then there
exists an admissible transformation $(P,Q)$ such that
the last row of $PAQ$ is $[0,\ldots,0,1]$
and $Pv = [0,\ldots,0,\lambda]\trp$
for some $\lambda \in \field{K}$.
\end{lemma}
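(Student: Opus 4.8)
The plan is to exploit the hypothesis $1 \in L(\als{A})$ directly: by definition of the left family, $1 = \sum_{i=1}^n c_i s_i$ for some $c = (c_1,\ldots,c_n) \in \field{K}^{1\times n}$, i.e.\ $1 = c\, s = c A\inv v$. First I would use this to arrange, via a column transformation $Q_1$, that the last coordinate of the left family equals the constant $1$. Concretely, since $c \neq 0$, pick an invertible $Q_1 \in \field{K}^{n\times n}$ whose last column is $Q_1 e_n = c\trp$ scaled appropriately; but one must be careful to keep the transformation admissible, so the first row of $Q_1$ must stay $e_1$. Since $\als{A}$ is minimal of dimension $n\ge 2$, the left family $s$ is $\field{K}$-linearly independent (Proposition~\ref{pro:mr.cohn94.47}), hence $c$ is not a multiple of $e_1$ unless $s_1 = 1$ already, and in general one can complete to an admissible $Q_1$ so that the new left family $\tilde s = Q_1\inv s$ has $\tilde s_n = 1$. (If $s_1$ itself happens to equal $1$ one first applies an admissible permutation-type move to avoid a clash with the $e_1$ constraint.)

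Next I would transform the system matrix and the vector $v$ so that the ``$\tilde s_n = 1$'' information is visible row-wise. The key tool is Lemma~\ref{lem:mr.rt1}: after the column move we have an ALS $(\tilde u, \tilde A, \tilde v)$ with linearly independent left family $\tilde s = \tilde A\inv \tilde v$ and $\tilde s_n = 1$. Consider the linear (degree-zero-plus-letters) row vector $B := e_n\trp \tilde A - (\text{last row of }\tilde A \text{ acting on the constant }1)$; more precisely, the equation $\tilde s_n = 1$ means $(e_n\trp)\tilde s = 1$, equivalently $\bigl(e_n\trp \tilde A - \text{(something)} \bigr)\tilde s = 0$ in a way that lets me write, after subtracting a suitable scalar multiple of the last equation, a relation of the form $B \tilde s = 0$ with $B$ linear. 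Lemma~\ref{lem:mr.rt1} then yields $T \in \field{K}^{1\times n}$ with $B = T\tilde A$, and applying the row transformation $P$ that replaces the last row by this combination forces the last row of $P\tilde A Q_1$ to be $[0,\ldots,0,1]$ while simultaneously forcing $(P\tilde v)_n = \lambda$ for the scalar $\lambda$ coming from the constant term, and $(P\tilde v)_i = 0$ for $i<n$ because $\tilde s_n$ has no non-constant part to feed those rows. One checks $P$ is invertible since its last row, expressed in the old basis, has a nonzero $n$th entry (the coefficient of $\tilde s_n$ in the relation is a unit).

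The main obstacle I anticipate is the bookkeeping around \emph{admissibility}: the constraint that the first row of $Q$ be $e_1$ interacts awkwardly with wanting the last column of $Q$ to encode the coefficient vector $c$, especially in the degenerate situation where $c$ is supported near the first coordinate or where $f$ itself is already (affinely) constant-like. I would handle this by first applying a preliminary admissible transformation that separates the ``constant direction'' from the first coordinate (possible because $n\ge 2$ and minimality prevents $s_1$ and the constant $1$ from being forced to coincide in a way that blocks the construction), and only then perform the column move above. A secondary, purely technical point is verifying that the matrix $B$ built from the relation $\tilde s_n = 1$ is genuinely \emph{linear} in the $x_\ell$ (so that Lemma~\ref{lem:mr.rt1} applies): this follows because $\tilde A$ is linear and the relation only subtracts a scalar multiple of one row, but it should be spelled out. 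Everything else is routine linear algebra over $\field{K}$.
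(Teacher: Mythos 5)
Your route is the one the paper itself indicates: it does not reprove this lemma (it cites \cite[Lemma~4.11]{Schrempf2017a9}) but remarks just before Lemma~\ref{lem:mr.rt1} that it can be obtained from that lemma, and your two key moves are exactly that: an admissible column transformation making the last component of the left family equal to the constant $1$, then converting $\tilde{s}_n=1$ into a relation $B\tilde{s}=0$ with $B$ linear so that Lemma~\ref{lem:mr.rt1} produces the row data. Two small corrections to the first move: the condition is on the last \emph{row} of $Q_1\inv$ (equivalently $cQ_1=[0,\ldots,0,1]$), not on the last column of $Q_1$; and your worried degenerate case is vacuous, since minimality with $n\ge 2$ forces $f=s_1\notin\field{K}$, so the (unique, by linear independence of $s$) coefficient vector $c$ with $cs=1$ is never a multiple of $e_1$ --- and had it been, no ``permutation-type'' admissible move could help, because every admissible transformation keeps $f$ as the first component of the left family.

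The genuine gaps are in the endgame. First, to cancel the constant you must subtract a multiple of an equation whose right-hand side entry is nonzero, i.e.\ $B=[0,\ldots,0,1]-\tilde{v}_j\inv\,(\text{row }j\text{ of }\tilde{A})$ for some $j$ with $\tilde{v}_j\neq 0$ (such $j$ exists since $v\neq 0$); this need not be the \emph{last} equation. Second, writing $B=T\tilde{A}$ and $p:=T+\tilde{v}_j\inv e_j\trp$, so that $p\tilde{A}=[0,\ldots,0,1]$, the matrix $P$ obtained by merely replacing the last row of the identity by $p$ does \emph{not} do what you claim: the other entries of $Pv$ stay equal to the old $v_i$ (the argument ``$\tilde{s}_n$ has no non-constant part to feed those rows'' is not a reason), and $P$ need not be invertible since $p_n$ can vanish --- e.g.\ for the minimal ALS of $xy$ with its rows cyclically permuted one gets $p=[1,0,\ldots,0]$ and, incidentally, $\tilde{v}_n=0$, so both of these steps fail on that example. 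The repair is easy but must be said: since $pv=p\tilde{A}\tilde{s}=\tilde{s}_n=1\neq 0$, take the remaining $n-1$ rows of $P$ to be a basis of $\{w\in\field{K}^{1\times n}: wv=0\}$; then $P$ is invertible, $Pv=[0,\ldots,0,1]\trp$, and the last row of $PAQ_1$ is $[0,\ldots,0,1]$ as required (alternatively, clear the remaining right-hand side entries afterwards by row operations with the new last row, whose right-hand side entry is nonzero).
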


\begin{lemma}[for Type~$(1,*)$
\protect{\cite[Lemma~4.12]{Schrempf2017a9}% MR3864857 0218-1967
}]\label{lem:mr.forR}
Let $\als{A} = (u,A,v)$ be a \emph{minimal} ALS
with $\dim \als{A} = n \ge 2$ and $1\in R(\als{A})$. Then there
exists an admissible transformation $(P,Q)$ such that
the first column of $PAQ$ is $[1,0,\ldots,0]\trp$
and $Pv = [0,\ldots,0,\lambda]\trp$
for some $\lambda \in \field{K}$.
\end{lemma}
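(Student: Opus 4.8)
The plan is to mirror the proof of Lemma~\ref{lem:mr.forL} (for Type~$(*,1)$), exploiting the evident left-right symmetry. First I would use the hypothesis $1 \in R(\als{A})$ together with minimality to conclude, via Proposition~\ref{pro:mr.cohn94.47}, that the right family $t = (uA\inv)_j$ is $\field{K}$-linearly independent and that $1$ lies in its span $R(\als{A})$. Hence there is a vector $c \in \field{K}^{1 \times n}$ with $1 = c\,t\trp = c\,(uA\inv)\trp$, equivalently $uA\inv$ expresses $1$ as a linear combination of its components; dualizing, there is $c' \in \field{K}^{n\times 1}$ with $u A\inv c' = 1$, i.e. $u = (A c')\trp$ read appropriately—more cleanly, $1 \in R(\als{A})$ means the row vector $e = [1,0,\ldots,0]\cdot(\text{something})$; the concrete statement I want is: there exists $q \in \field{K}^{n}$ with $u A\inv$ having $1$ in its span, so that after a column transformation $Q_1$ (with first row $e_1$, to stay admissible) the first component of the transformed right family becomes $1$. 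Equivalently, by the symmetric version of Lemma~\ref{lem:mr.rt1} applied on the right, the relation "$1$ is a component of $uA\inv$ up to $\field{K}$-combination" produces a $\field{K}$-matrix witnessing it.

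Next, the key mechanical step: choose an admissible transformation $(P,Q)$ so that the transformed right family $\tilde t = u Q (P A Q)\inv = (uQ)(PAQ)\inv$ has $\tilde t_1 = 1$ identically in $\field{F}$. Because $1 \in R(\als{A})$, such a transformation exists; since the transformation on the $u$-side must preserve $u = e_1$, the freedom is in $Q$ having first row $e_1$ and in $P$ arbitrary invertible. Having arranged $\tilde t_1 = 1$, I read off what this forces on the first column of $\tilde A = PAQ$: from $\tilde t\, \tilde A = uQ = e_1 Q = e_1$ (since first row of $Q$ is $e_1$), and $\tilde t_1 = 1$, comparing first columns gives $\sum_j \tilde t_j \tilde A_{j1} = 1$; since the $\tilde t_j$ are $\field{K}$-linearly independent over $\field{F}$ and $\tilde t_1 = 1$ while $\tilde A_{j1} \in \freeALG{\field{K}}{X}$ linear, this forces $\tilde A_{11} = 1$ and $\tilde A_{j1} = 0$ for $j \ge 2$—that is, the first column of $PAQ$ is $[1,0,\ldots,0]\trp$, as desired.

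Finally, I must also get $Pv = [0,\ldots,0,\lambda]\trp$. Here minimality of $\als{A}$ (hence of the transformed system) and $\dim \ge 2$ are used: the left family $\tilde s = \tilde A\inv (Pv)$ is $\field{K}$-linearly independent, and $f = \tilde s_1$; since we have just normalized the first column of $\tilde A$ to $e_1$, the first row of $\tilde A\inv$ is again $e_1$, so $f = (Pv)_1$... wait—that would wrongly force $f \in \field{K}$. Instead the point is the reverse: after fixing the first column of $\tilde A$, I have remaining row-freedom in $P$ of block-lower-triangular type that preserves that first column (namely $P$ fixing the first coordinate), and column-freedom in $Q$ fixing the first row; I use this residual freedom, exactly as in Lemma~\ref{lem:mr.forL}'s proof, to clear all but the last entry of $Pv$. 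Concretely, $Pv$ is a $\field{K}$-vector; any invertible $\field{K}$-matrix $P'$ fixing $e_1\trp$ as its first column can permute/combine the remaining rows, so I can drive $Pv$ to $[(Pv)_1,0,\ldots,0,\lambda]\trp$; and $(Pv)_1 = 0$ because $f = (\tilde A\inv Pv)_1 = (Pv)_1 + (\text{strictly lower-degree stuff})$ would make $f$ have a nonzero scalar part unless handled—more precisely, since the first row of $\tilde A\inv$ is $e_1$ exactly when the first column of $\tilde A$ is $e_1\trp$, we get $(Pv)_1 = f$'s "constant term", which in the regular/generic setup is $0$; if not, one absorbs it. The cleanest route is to invoke Theorem~\ref{thr:mr.cohn99.14} or simply cite that this residual $\field{K}$-linear cleanup is identical to the one carried out in \cite[Lemma~4.12]{Schrempf2017a9}.

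The main obstacle I expect is the bookkeeping in the last paragraph: making precise which residual $\field{K}$-linear transformations are still "admissible" and still preserve the first column $[1,0,\ldots,0]\trp$ of $PAQ$ while allowing $Pv$ to be cleaned to $[0,\ldots,0,\lambda]\trp$, and checking that the first component $(Pv)_1$ genuinely vanishes rather than merely being reducible. Everything else is the symmetric transcription of Lemma~\ref{lem:mr.forL} with rows and columns (and $L$ and $R$) interchanged, using Proposition~\ref{pro:mr.cohn94.47} for the linear independence that rigidifies the argument and the left-right dual of Lemma~\ref{lem:mr.rt1} to produce the needed $\field{K}$-matrix.
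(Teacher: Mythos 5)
Your overall strategy (dualize Lemma~\ref{lem:mr.forL}, use Proposition~\ref{pro:mr.cohn94.47} and the right-hand dual of Lemma~\ref{lem:mr.rt1}; the paper itself only cites \cite[Lemma~4.12]{Schrempf2017a9} for this statement) is reasonable, but the step you actually write down to produce the first column fails. From $\tilde t\,(PAQ)=uQ=e_1$ the identity $\sum_j \tilde t_j (PAQ)_{j1}=1$ holds for \emph{every} admissible transformation, whatever the first column looks like --- the untransformed system already satisfies $\sum_j t_j A_{j1}=1$ without its first column being $e_1\trp$. $\field{K}$-linear independence of the $\tilde t_j$ only excludes relations with \emph{scalar} coefficients; it says nothing about relations whose coefficients are linear polynomials, so nothing forces $\tilde A_{11}=1$ and $\tilde A_{j1}=0$ for $j\ge 2$. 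Note also that $\tilde t = tP\inv$ depends only on $P$, while the first column of $PAQ$ can still be altered arbitrarily by admissible $Q$, so normalizing $\tilde t_1=1$ cannot by itself pin that column down. The missing construction is the column transformation: pick $c\in\field{K}^{n\times 1}$ with $tc=1$ (possible since $1\in R(\als{A})$ and $t$ is $\field{K}$-linearly independent by minimality); since $tA=e_1$ gives $t(Ae_1)=1$, we have $t(Ae_1-c)=0$, and the right-hand dual of Lemma~\ref{lem:mr.rt1} yields $w\in\field{K}^{n\times 1}$ with $Ae_1-c=Aw$; multiplying by $t$ shows $w_1=0$, so $Q=I-we_1\trp$ is admissible and the first column of $AQ$ is the nonzero constant vector $c$, which a row transformation $P$ (with $Pc=e_1\trp$) then carries to $[1,0,\ldots,0]\trp$.

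The treatment of $Pv$ also needs repair. Your worry that $f\in\field{K}$ comes from a slip: if the first column of $\tilde A$ is $e_1\trp$, then $e_1\trp$ is the first \emph{column} of $\tilde A\inv$, not its first row, so no such conclusion about $f$ arises; but your proposed fix (``invoke \ldots or simply cite \cite[Lemma~4.12]{Schrempf2017a9}'') is circular, since that is exactly the statement being proved. What is needed is: the residual row freedom consists of invertible $P'$ whose first column is $e_1\trp$ (so that the normalized first column of the system matrix is preserved), and such a $P'$ can move $Pv$ to $[0,\ldots,0,\lambda]\trp$ unless $Pv$ is a scalar multiple of $e_1\trp$; the multiple cannot be nonzero, since then the left family of the transformed system would be $\mu e_1$, hence $\field{K}$-linearly dependent for $n\ge 2$, and it cannot be zero, since $v=0$ likewise contradicts minimality. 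With these two repairs your outline becomes a complete proof.
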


\begin{remark}
If $g$ is of type $(*,1)$ then, by Lemma~\ref{lem:mr.forL},
\emph{each} minimal ALS for $g$ can be transformed into one
with the last row of the form $[0,\ldots,0,1]$.
If $g$ is of type $(1,*)$ then, by Lemma~\ref{lem:mr.forR},
\emph{each} minimal ALS for $g$ can be transformed into one
with the first column of the form $[1,0,\ldots,0]\trp$.
This can be done by \emph{linear} techniques,
see the remark before
\cite[Theorem~4.13]{Schrempf2017a9}% MR3864857 0218-1967
.
\end{remark}

\ifJOURNAL
\medskip
\fi

\begin{remark}
Since $p \in \freeALG{\field{K}}{X}$ is of type $(1,1)$,
both constructions
can be used for the minimal polynomial multiplication
(Proposition~\ref{pro:mr.minmul}).
The alternative proof in 
\cite{Schrempf2017c9}% IEJA001 1306-6048
\ relies in particular on Lemma~\ref{lem:mr.slinin}.
One could call the multiplication from Proposition~\ref{pro:mr.ratop}
type $(*,*)$. For a discussion of \emph{minimality} of
different types of multiplication we refer to
\cite{Schrempf2017c9}% IEJA001 1306-6048
.
\end{remark}

\begin{proposition}[Multiplication Type $(1,*)$
\protect{\cite[Proposition~3.11]{Schrempf2017c9}% IEJA001 1306-6048
}]\label{pro:mr.mul2}
Let $f,g\in \field{F} \setminus \field{K}$ be given by the
admissible linear systems
$\als{A}_f = (u_f, A_f, v_f) = (1,A_f,\lambda_f)$ of dimension $n_f$
of the form
\begin{displaymath}
\als{A}_f = \left(
\begin{bmatrix}
1 & . & . 
\end{bmatrix},
\begin{bmatrix}
a & b' & b \\
a' & B & b'' \\
. & . & 1
\end{bmatrix},
\begin{bmatrix}
. \\ . \\ \lambda_f
\end{bmatrix}
\right)
\end{displaymath}
and $\als{A}_g = (u_g, A_g, v_g) = (1,A_g,\lambda_g)$
of dimension $n_g$ respectively.
Then an ALS for $fg$ of dimension $n = n_f + n_g - 1$ is given by
\begin{displaymath}
\als{A} = \left(
\begin{bmatrix}
1 & . & . 
\end{bmatrix},
\begin{bmatrix}
a & b' & \lambda_f b u_g \\
a' & B & \lambda_f b'' u_g \\
. & . & A_g
\end{bmatrix},
\begin{bmatrix}
. \\ . \\ v_g
\end{bmatrix}
\right).
\end{displaymath}
\end{proposition}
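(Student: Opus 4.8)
The plan is to verify directly that the proposed triple $\als{A}=(u,A,v)$ is an ALS for $fg$ by producing its left family (solution vector) explicitly and reading off the first component. Write the claimed system matrix in block form with the three block columns of sizes matching the decomposition $\bigl[\begin{smallmatrix} a & b' \\ a' & B \end{smallmatrix}\bigr]$, a single middle block, and the $n_g$-dimensional block $A_g$. Denote the solution of $A_g s_g = v_g$ by $s_g$, so that $g$ is the first component of $s_g$, i.e. $g = u_g s_g = (e_1)s_g$. The key observation is that the last block of equations in $A\,s = v$ reads exactly $A_g \cdot (\text{last block of }s) = v_g$, hence the last $n_g$ components of $s$ equal $s_g$. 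Substituting this into the first two block rows, the right-hand contributions $\lambda_f b u_g s_g$ and $\lambda_f b'' u_g s_g$ collapse to $\lambda_f b \, g$ and $\lambda_f b'' g$ respectively, since $u_g s_g = g$.

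Next I would compare with the original system for $f$. Recall $\als{A}_f$ has solution vector $s_f$ with $A_f s_f = v_f = [0,\dots,0,\lambda_f]\trp$; writing $s_f = [\,\sigma_f \mid \tau_f \mid \rho_f\,]\trp$ according to the same block sizes, the bottom row of $A_f s_f = v_f$ gives $\rho_f = \lambda_f$ (using $a_{nn}=1$, the third block row $[\,.\,,.\,,1\,]$), and $f = \sigma_f$ (first component). The remaining rows of $A_f s_f = v_f$ then say
\begin{displaymath}
\begin{bmatrix} a & b' \\ a' & B \end{bmatrix}
\begin{bmatrix} \sigma_f \\ \tau_f \end{bmatrix}
= -\begin{bmatrix} b \\ b'' \end{bmatrix}\lambda_f .
\end{displaymath}
Comparing with the top two block rows of $A\,s=v$, namely $\bigl[\begin{smallmatrix} a & b' \\ a' & B\end{smallmatrix}\bigr]\bigl[\begin{smallmatrix}\sigma\\\tau\end{smallmatrix}\bigr] = -\lambda_f\bigl[\begin{smallmatrix}b\\b''\end{smallmatrix}\bigr] g$, we see the top-left block of $s$ is obtained from $(\sigma_f,\tau_f)$ simply by scaling by $g$ (this uses that the matrix $\bigl[\begin{smallmatrix} a & b' \\ a' & B\end{smallmatrix}\bigr]$ acts on the first $n_f-1$ coordinates, and linearity over $\field{F}$). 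In particular the first component of $s$ is $\sigma_f \cdot g = f g$, which is what we want. I would also remark that fullness of $A$ follows because $A$ is block upper triangular with diagonal blocks $\bigl[\begin{smallmatrix} a & b' \\ a' & B\end{smallmatrix}\bigr]$ (which is full, being obtained from the full matrix $A_f$ by deleting the last row and column, itself invertible over $\field{F}$ since the $(n_f,n_f)$ entry is $1$ and the last row/column are otherwise zero) and $A_g$ (full), so $A$ is invertible over $\field{F}$; and $u = e_1$ so $(u,A,v)$ is genuinely an ALS.

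The main obstacle, and the only point needing care, is justifying the manipulation $\lambda_f b u_g s_g = \lambda_f b\, g$ and the ``scaling by $g$'' step at the level of the free field rather than just formally: one must check that the block $\bigl[\begin{smallmatrix} a & b' \\ a' & B\end{smallmatrix}\bigr]$ appearing in $A$ is literally the same matrix as the corresponding block of $A_f$ (it is, by construction), so that the unique solution of the $f$-subsystem, pre-multiplied by the scalar $g \in \field{F}$, solves the corresponding subsystem of $A\,s = v$; here one invokes uniqueness of solutions of full linear systems over $\field{F}$. Everything else is bookkeeping with the block structure. I would close by noting that the dimension count $n = (n_f - 1) + n_g$ is immediate from the sizes of the three block columns, matching the claim $n = n_f + n_g - 1$.
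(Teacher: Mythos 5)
Your proof is correct, and it is worth noting that this paper does not prove the proposition at all: it is quoted from the companion paper (Proposition~2.8 of the reference cited in the statement), where the construction is obtained by starting from the general product ALS of Proposition~\ref{pro:mr.ratop}, $\bigl[\begin{smallmatrix} A_f & -v_f u_g \\ . & A_g \end{smallmatrix}\bigr]$, and using the special shape of $A_f$ (last row $[0,\ldots,0,1]$, $v_f=\lambda_f e_{n_f}$) to perform an admissible column operation that zeroes out the decoupled component and then deletes the superfluous row and column. Your route is the natural direct one instead: guess the solution vector $s=[\sigma_f g,\ \tau_f g,\ s_g]$, verify it block row by block row (the key identities being $u_g s_g=g$ and $(Mx)g=M(xg)$, i.e.\ right multiplication by the scalar $g\in\field{F}$ commutes with the left action of the coefficient matrix), and invoke uniqueness of the solution of a full system; this is self-contained and makes the dimension count and the first component $fg$ completely explicit, whereas the transformation-based derivation explains where the formula comes from and gets equivalence to the Proposition~\ref{pro:mr.ratop} product for free. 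Two small points to tidy up: the matrix $A_f$ does \emph{not} have its last column ``otherwise zero'' (it contains $b$ and $b''$); what you actually need, and what is true, is that the last \emph{row} is $[0,\ldots,0,1]$, which already gives that $A_f$ is invertible over $\field{F}$ if and only if the block $\bigl[\begin{smallmatrix} a & b' \\ a' & B \end{smallmatrix}\bigr]$ is (if $wM=0$ with $w\neq 0$ then $[w,-wc]A_f=0$), and from there your fullness argument for the block upper triangular $A$ is fine, using the paper's identification of ``full'' with ``invertible over $\field{F}$'' in Definition~\ref{def:mr.rep}.
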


\begin{proposition}[Multiplication Type $(*,1)$
\protect{\cite[Proposition~3.12]{Schrempf2017c9}% IEJA001 1306-6048
}]\label{pro:mr.mul1}
Let $f,g\in \field{F} \setminus \field{K}$ be given by the
admissible linear systems $\als{A}_f = (u_f, A_f, v_f) = (1, A_f, \lambda_f)$ 
of dimension $n_f$ and
$\als{A}_g = (u_g, A_g, v_g) = (1, A_g, \lambda_g)$
of dimension $n_g$ of the form
\begin{displaymath}
\als{A}_g = \left(
\begin{bmatrix}
1 & . & .
\end{bmatrix},
\begin{bmatrix}
1 & b'& b \\
. & B & b'' \\
. & c' & c
\end{bmatrix},
\begin{bmatrix}
. \\ . \\ \lambda_g
\end{bmatrix}
\right)
\end{displaymath}
respectively.
Then an ALS for $fg$ of dimension $n = n_f + n_g -1$ is given by
\begin{displaymath}
\als{A} = \left(
\begin{bmatrix}
u_f & . & . 
\end{bmatrix},
\begin{bmatrix}
A_f & e_{n_f} \lambda_f b' & e_{n_f} \lambda_f b \\
. & B & b'' \\
. & c' & c
\end{bmatrix},
\begin{bmatrix}
. \\ . \\ \lambda_g
\end{bmatrix}
\right).
\end{displaymath}
\end{proposition}

\medskip
\begin{remark}
Notice that the transformation in the following lemma is \emph{not}
necessarily admissible. However, except for $n=2$ (which can be
treated by permuting the last two elements in the left family),
it can be chosen such that it is \emph{admissible}.
The proof is similar to that of 
\cite[Lemma~2.4]{Schrempf2017b9}% JSC2018 0747-7171
.
\end{remark}

\begin{lemma}[\protect{%
\cite[Lemma~3.15]{Schrempf2017c9}% IEJA001 1306-6048
}]\label{lem:mr.min1}
Let $\als{A} = (u,A,v)$ be an ALS of dimension $n\ge 2$
with $v = [0,\ldots,0,\lambda]\trp$ and
$\field{K}$-linearly dependent left family $s=A\inv v$.
Let $m \in \{ 2, 3, \ldots, n \}$ be the minimal index
such that the left subfamily $\underline{s} = (A\inv v)_{i=m}^n$
is $\field{K}$-linearly independent.
Let $A = (a_{ij})$ and assume that $a_{ii}=1$ for $1 \le i \le m$
and $a_{ij}=0$ for $j < i \le m$ (upper triangular $m \times m$ block)
and $a_{ij}=0$ for $j \le m < i$ (lower left zero block of size $(n-m) \times m$).
Then there exists matrices $T,U \in \field{K}^{1 \times (n+1-m)}$
such that
\begin{displaymath}
U + (a_{m-1,j})_{j=m}^n - T(a_{ij})_{i,j=m}^n  =
\begin{bmatrix}
  0 & \ldots & 0
\end{bmatrix}
\quad\text{and}\quad
T(v_i)_{i=m}^n = 0.
\end{displaymath}
\end{lemma}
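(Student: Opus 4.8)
The plan is to reduce the statement to an application of Lemma~\ref{lem:mr.rt1}. First I would isolate the relevant data: write $s = A\inv v = (s_1,\ldots,s_n)\trp$ and let $\block{s} = (s_m,\ldots,s_n)\trp$ be the $\field{K}$-linearly independent tail subfamily. Because of the assumed shape of $A$ (upper unitriangular $m\times m$ leading block, zero lower-left $(n-m)\times m$ block), the bottom $n+1-m$ equations of $As=v$ decouple from the first $m-1$ unknowns: the submatrix $(a_{ij})_{i,j=m}^n$ is itself an affine-linear matrix (in the $x_\ell$) and $\block{s}$ satisfies $(a_{ij})_{i,j=m}^n\,\block{s} = (v_i)_{i=m}^n = [0,\ldots,0,\lambda]\trp$. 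So $(a_{ij})_{i,j=m}^n$ is a square linear matrix whose associated left family $\block{s}$ is $\field{K}$-linearly independent --- exactly the hypothesis needed to invoke Lemma~\ref{lem:mr.rt1} with this smaller system playing the role of ``$\als{A}$''.

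Next I would produce the $1\times(n+1-m)$ linear matrix $B$ whose vanishing on $\block{s}$ we want. The idea is that the full left family $s$ is $\field{K}$-linearly \emph{dependent}, and $m$ is minimal with $\block{s}$ independent, so $s_{m-1}$ lies in the $\field{K}$-span of $s_m,\ldots,s_n$; write $s_{m-1} = \sum_{i=m}^n t_i s_i$ for scalars $t_i$, i.e. $s_{m-1} = T\block{s}$ with $T = (t_m,\ldots,t_n)\in\field{K}^{1\times(n+1-m)}$. On the other hand, reading off row $m-1$ of $As=v$ together with $a_{m-1,m-1}=1$ and $a_{m-1,j}=0$ for $j<m-1$ gives $s_{m-1} + \sum_{j=m}^n a_{m-1,j}s_j = 0$ (the right-hand side is $0$ because $m-1 < n$, so $v_{m-1}=0$), that is $s_{m-1} = -(a_{m-1,j})_{j=m}^n\,\block{s}$. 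Subtracting, the linear matrix $B := (a_{m-1,j})_{j=m}^n + T - T(a_{ij})_{i,j=m}^n$ — wait, more carefully: set $B := T(a_{ij})_{i,j=m}^n - (a_{m-1,j})_{j=m}^n - T$. Then $B\,\block{s} = T(a_{ij})_{i,j=m}^n\block{s} - (a_{m-1,j})_{j=m}^n\block{s} - T\block{s} = T(v_i)_{i=m}^n - (-s_{m-1}) - s_{m-1} = T(v_i)_{i=m}^n$, and since $T(v_i)_{i=m}^n = t_n\lambda$ is a scalar multiple of the empty word it does \emph{not} in general vanish. To fix this, choose $U\in\field{K}^{1\times(n+1-m)}$ to absorb exactly that scalar: take $U := T(v_i)_{i=m}^n\cdot e_1\trp$ so that $U$ is the constant (degree-zero) part and $B - U$ has no constant term in the direction needed; then apply Lemma~\ref{lem:mr.rt1} to the genuinely-linear, constant-free piece whose action on $\block{s}$ is zero. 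Concretely, set $B' := (a_{m-1,j})_{j=m}^n - T(a_{ij})_{i,j=m}^n + U$ with $U$ chosen as the constant matrix making $B'\block{s}=0$; Lemma~\ref{lem:mr.rt1} then yields the unique $1\times(n+1-m)$ scalar row realizing $B'$ as (that row)$\cdot(a_{ij})_{i,j=m}^n$, but in fact we want the reverse bookkeeping: the two displayed equations are $U + (a_{m-1,j})_{j=m}^n - T(a_{ij})_{i,j=m}^n = 0$ and $T(v_i)_{i=m}^n = 0$, so $T$ must be chosen to kill $t_n\lambda$ simultaneously, which is possible precisely because of the dependency relation.

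So the cleaner route, which I would actually write up, is: by $\field{K}$-linear dependence of $s$ and minimality of $m$, the row vector $\bigl[\,(a_{m-1,j})_{j=m}^n \mid 0\,\bigr]$ together with the bottom block gives a linear matrix $B := \bigl[(a_{m-1,j})_{j=m}^n\bigr] \in \field{K}\langle X\rangle^{1\times(n+1-m)}$ (after appending the relation from row $m-1$) that annihilates the independent family $\block{s}$ up to the scalar $v$-contribution; applying Lemma~\ref{lem:mr.rt1} to the ALS $\bigl((a_{ij})_{i,j=m}^n,\ (v_i)_{i=m}^n\bigr)$ — whose left family $\block{s}$ is independent — produces the unique $T\in\field{K}^{1\times(n+1-m)}$ with $B = T(a_{ij})_{i,j=m}^n$ as linear matrices; reading off the constant term of this matrix identity gives $T(v_i)_{i=m}^n = 0$ after setting $U$ equal to minus the constant part of $(a_{m-1,j})_{j=m}^n - T(a_{ij})_{i,j=m}^n$, whence both displayed equations hold. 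I expect the main obstacle to be purely bookkeeping: carefully separating the constant ($\otimes 1$) part from the linear part so that Lemma~\ref{lem:mr.rt1} — which is stated for honest linear matrices and gives $B=TA$ — can be applied to the homogeneous-in-$s$ relation, and then recovering the scalar identity $T(v_i)_{i=m}^n=0$ from the constant component of the resulting matrix equation. The rank hypotheses (full $A$, independent $\block{s}$) and the prescribed block structure of $A$ are exactly what make the decoupling and the uniqueness in Lemma~\ref{lem:mr.rt1} go through, so no further input is needed.
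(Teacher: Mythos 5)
Your overall plan---decouple the tail subsystem $(a_{ij})_{i,j=m}^n\,\underline{s}=(v_i)_{i=m}^n$, use that $s_{m-1}$ depends $\field{K}$-linearly on the independent tail, and invoke Lemma~\ref{lem:mr.rt1}---is indeed the intended route (the same pattern appears in the paper inside the proof of Theorem~\ref{thr:mr.lmin}: there the scalar dependency row is called $U$, Lemma~\ref{lem:mr.rt1} is applied to $B=-A_{k,k}U-A_{k,:m}$, and $Tv_{\block{:m}}=0$ is then deduced). But the execution of the key step has a genuine gap. The row fed to Lemma~\ref{lem:mr.rt1} must \emph{exactly} annihilate $\underline{s}$; the correct choice is $B:=U+(a_{m-1,j})_{j=m}^n$, where $U\in\field{K}^{1\times(n+1-m)}$ is the dependency row with $U\underline{s}=s_{m-1}$ (it exists by minimality of $m$ and independence of the tail). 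Then $B\underline{s}=s_{m-1}-s_{m-1}=0$ by row $m-1$ together with $v_{m-1}=0$, Lemma~\ref{lem:mr.rt1} applied to the tail system (whose system matrix is full, since the leading $m\times m$ block of $A$ is unitriangular and $A$ is full) yields $T$ with $B=T(a_{ij})_{i,j=m}^n$, which is the first displayed equation; the second follows from $T(v_i)_{i=m}^n=T(a_{ij})_{i,j=m}^n\,\underline{s}=B\underline{s}=0$.

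In your write-up this never happens. In the second paragraph you use the letter $T$ both for the dependency coefficients and for the sought multiplier, which derails the computation and leaves the unresolved term $T(v_i)_{i=m}^n$. In the ``cleaner route'' you then apply Lemma~\ref{lem:mr.rt1} to $B=(a_{m-1,j})_{j=m}^n$ alone, but this row does \emph{not} satisfy the hypothesis $B\underline{s}=0$: its defect is $-s_{m-1}$, an element of the span of the independent tail, not a ``scalar $v$-contribution'', so the lemma produces no $T$ at all. Choosing $U$ afterwards as ``minus the constant part of $(a_{m-1,j})_{j=m}^n-T(a_{ij})_{i,j=m}^n$'' is circular---that this difference is purely scalar is exactly what must be proved---and the claim that $T(v_i)_{i=m}^n=0$ can be ``read off the constant term'' of the identity is not valid; it is the evaluation on $\underline{s}$ indicated above that gives it. With the bookkeeping fixed as described (first $U$, then $B$, then $T$ from Lemma~\ref{lem:mr.rt1}), your argument becomes correct and coincides with the intended proof.
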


\begin{lemma}[\protect{%
\cite[Lemma~3.16]{Schrempf2017c9}% IEJA001 1306-6048
}]\label{lem:mr.slinin}
Let $p \in \freeALG{\field{K}}{X} \setminus \field{K}$ and
$g \in \field{F} \setminus \field{K}$
be given by the \emph{minimal} admissible linear systems
$A_p = (u_p, A_p, v_p)$ and $A_g = (u_g, A_g, v_g)$
of dimension $n_p$ and $n_g$ respectively
with $1 \in R(g)$.
Then the left family of the ALS $\als{A} = (u,A,v)$
for $pg$ of dimension $n = n_p + n_g -1$ from
Proposition~\ref{pro:mr.mul1} is $\field{K}$-linearly independent.
\end{lemma}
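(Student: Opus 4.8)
The plan is to show that the left family of the product ALS $\als{A} = (u,A,v)$ from Proposition~\ref{pro:mr.mul1} is $\field{K}$-linearly independent by relating it explicitly to the left families of $\als{A}_p$ and $\als{A}_g$. First I would write out the left family of $\als{A}$ in block form. Partition the solution vector $s = A\inv v$ of $As = v$ as $s = [s^{(1)};\, s^{(2)};\, s^{(3)}]$ according to the block structure in~\eqref{eqn:mr.mul1.fg}, where $s^{(1)}$ has $n_p$ components, $s^{(2)}$ has $n_g - 2$ components (the ``$B$''-block), and $s^{(3)}$ is the single last component. The bottom two block rows of $As=v$ read exactly like the corresponding rows of $A_g s_g = v_g$ but with an extra contribution from $s^{(1)}$ entering only through the top block's coupling $e_{n_p}\lambda_f b'$ and $e_{n_f}\lambda_f b$; since those couplings hit only the $n_p$-th coordinate of $s^{(1)}$, I expect that $(s^{(2)}, s^{(3)})$ equals $(s_p)_{n_p}$ times the corresponding tail $(s_g^{(2)}, s_g^{(3)})$ of the left family of $\als{A}_g$, where $(s_p)_{n_p}$ is the last component of the left family of $\als{A}_p$ (which by Proposition~\ref{pro:mr.minmon}-style reasoning, or rather by $p$ being a minimal polynomial ALS, equals $\lambda_f\inv$ times... here I should be careful and just call it $\sigma \ne 0$). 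Meanwhile the top block row gives $A_p s^{(1)} = (\text{something})\cdot(b' s^{(2)} + b s^{(3)})\, e_{n_p}$, so $s^{(1)}$ is $A_p\inv$ applied to a scalar multiple of $e_{n_p}$, hence a scalar multiple of the left family $s_p$ of $\als{A}_p$.

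Once this explicit description is in hand, the linear independence argument is the key step. Suppose a $\field{K}$-linear combination of the components of $s$ vanishes: $\sum_i \alpha_i s^{(1)}_i + \sum_j \beta_j s^{(2)}_j + \gamma s^{(3)} = 0$. Using $1 \in R(g)$ and minimality of $\als{A}_g$, the family $(s^{(2)}, s^{(3)})$ together with $1$ (which lies in $R(g)$, not $L(g)$ — so I actually need to use that $s^{(2)}, s^{(3)}$ are part of a $\field{K}$-linearly independent left family of the minimal ALS $\als{A}_g$) is $\field{K}$-linearly independent, and each $s^{(1)}_i$ is a scalar multiple of the nonzero element $s_p$-coordinate... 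Actually the cleaner route: the elements $s^{(1)}_i$ all lie in $L(p) \subseteq \freeALG{\field{K}}{X}$, more precisely in $\freeALG{\field{K}}{X}\cdot p$ if one tracks degrees, whereas the nontrivial combinations of $s^{(2)}, s^{(3)}$ produce elements involving $g$ genuinely (a ``fraction''), and one derives a contradiction from a rational-identity / degree argument. I would organize this as: first conclude $\sum_j \beta_j s^{(2)}_j + \gamma s^{(3)} = (\sum_i \alpha_i \tilde s_i)\cdot(\text{poly in } p)$ for suitable coefficients, then invoke minimality of $\als{A}_g$ to force $\beta_j = \gamma = 0$ unless the polynomial factor is constant, and separately handle the degenerate constant case using minimality of $\als{A}_p$.

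A slicker alternative, which I would actually prefer to carry out, is to invoke Lemma~\ref{lem:mr.rt1} in the contrapositive together with Proposition~\ref{pro:mr.cohn94.47}. Suppose the left family $s = A\inv v$ of $\als{A}$ were $\field{K}$-linearly dependent. By construction $\als{A}$ represents $pg$, and one checks that its right family is automatically $\field{K}$-linearly independent (it is essentially the right family of $\als{A}_p$ augmented, since $u = [u_f, ., .]$ and the block structure preserves the right-family independence coming from minimality of $\als{A}_p$ and the shape~\eqref{eqn:mr.mul1.g} of $\als{A}_g$). Then $\als{A}$ is non-minimal, so by Theorem~\ref{thr:mr.cohn99.14} applied with $\pi'$ a minimal ALS of $pg$, there is a block of superfluous rows; tracing this block back through the explicit form~\eqref{eqn:mr.mul1.fg} and using that $\als{A}_p$ and $\als{A}_g$ are \emph{themselves} minimal, one forces the superfluous rows to live entirely in the $A_p$-block or entirely in the $A_g$-block, contradicting minimality of one of them — unless the interaction term $\lambda_f b'$, $\lambda_f b$ vanishes, which is excluded since $g \notin \field{K}$ forces $b'$ or $b$ nonzero in a minimal ALS for $p$.

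The hard part will be the bookkeeping in the block decomposition: pinning down exactly which coordinate of the left family of $\als{A}_p$ is nonzero (it is the last one, by the normalization $v_p = [0,\ldots,0,\lambda_p]\trp$ available since $p \in \freeALG{\field{K}}{X}$ is of type $(1,1)$, so $\als{A}_p$ can be taken polynomial), and verifying that the coupling block $e_{n_p}\lambda_f b', e_{n_p}\lambda_f b$ genuinely mixes the two subsystems so that no nontrivial relation can be confined to one side. I expect the degree/rational-identity argument (that a nonzero polynomial multiple of $g$ cannot be a polynomial, using $g \notin \field{K}$ and $1 \in R(g)$) to be the one genuinely nontrivial ingredient, and everything else to be linear algebra over $\field{K}$ combined with Lemma~\ref{lem:mr.rt1} and Proposition~\ref{pro:mr.cohn94.47}.
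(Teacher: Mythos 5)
Your block computation of the left family of~\eqref{eqn:mr.mul1.fg} is wrong in both places, and the errors hide where the real difficulty sits. Since the lower-left blocks of the system matrix are zero, the last two block rows close on $(s^{(2)},s^{(3)})$ alone, so $(s^{(2)},s^{(3)})$ equals \emph{exactly} the tail $\bigl((s_g)_2,\ldots,(s_g)_{n_g}\bigr)$ of the left family of $\als{A}_g$ --- there is no factor $(s_p)_{n_p}$ (the scaling phenomenon you describe occurs in the \emph{right} family, where $(t^{(2)},t^{(3)})=(t_p)_{n_p}\lambda_p\,(t_g^{(2)},t_g^{(3)})$, and that factor is not a scalar either). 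In the first block row one gets $b's^{(2)}+bs^{(3)}=-(s_g)_1=-g$, hence $A_p s^{(1)}=e_{n_p}\lambda_p\,g$ and $s^{(1)}=s_p\,g$ componentwise; since $g\notin\field{K}$ this is \emph{not} ``$A_p\inv$ applied to a scalar multiple of $e_{n_p}$''. With the correct description, a nontrivial dependence among the components of $s$ is equivalent (after using minimality of $\als{A}_p$ and of $\als{A}_g$ to dispose of the degenerate cases) to the existence of a \emph{nonzero} polynomial $q=\sum_i\alpha_i(s_p)_i\in L(p)$ with $qg\in\linsp\{(s_g)_j:\ j\ge 2\}$. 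Ruling this out is the entire content of the lemma, and your sketch contains no argument for it: ``invoke minimality of $\als{A}_g$ to force $\beta_j=\gamma=0$ unless the polynomial factor is constant'' conflates $\field{K}$-linear independence with independence over $\freeALG{\field{K}}{X}$; minimality gives only the former, and for instance $g=(1-yx)\inv$ satisfies $xg\in L(g)$, so polynomial-coefficient relations are not excluded by minimality alone. The hypothesis $1\in R(g)$ --- which enters through the shape~\eqref{eqn:mr.mul1.g}, i.e.\ first column of $A_g$ equal to $e_1$, equivalently $(t_g)_1=1$ --- must be used in an essential way, and your proposal never does so concretely. Your claimed key ingredient (``a nonzero polynomial multiple of $g$ cannot be a polynomial'') is not the statement needed, because the elements of $\linsp\{(s_g)_j:\ j\ge 2\}$ are not polynomials; and note that Lemma~\ref{lem:mr.rt1} cannot be applied directly to the resulting relation once $\deg q\ge 1$, since its coefficient matrix must be linear.

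The ``slicker alternative'' does not repair this. The assertion that the right family of $\als{A}$ is ``automatically'' $\field{K}$-linearly independent is unproven and is at least as strong as the lemma itself: together with left independence it would make $\als{A}$ minimal, which is a strictly stronger conclusion requiring the extra hypotheses of the minimal factor multiplication \cite[Theorem~4.2]{Schrempf2017c2}, and the explicit form of $t$ above shows nothing automatic about it. Likewise, ``tracing back'' the block decomposition of Theorem~\ref{thr:mr.cohn99.14} so that the superfluous rows lie entirely inside the $A_p$- or the $A_g$-block is precisely the interaction problem the lemma is about, not bookkeeping; the coupling entries $e_{n_p}\lambda_p b'$, $e_{n_p}\lambda_p b$ being nonzero does not by itself prevent a superfluous block from straddling both parts. (For reference, the present paper does not prove this statement at all but quotes it from \cite{Schrempf2017c2}, so any complete argument here must in particular handle non-constant $q$, e.g.\ by a reduction to a situation where Lemma~\ref{lem:mr.rt1} or Lemma~\ref{lem:mr.min1} becomes applicable; as written, your proposal does not get there.)
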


\begin{proposition}[Minimal Polynomial Multiplication
\protect{\cite[Proposition~26]{Schrempf2017b9}% JSC2018 0747-7171
}]\label{pro:mr.minmul}
Let $0\ne p,q \in \freeALG{\field{K}}{X}$ be given by the
\emph{minimal} polynomial admissible linear systems
$A_p = (1, A_p, \lambda_p)$ and
$A_q = (1, A_q, \lambda_q)$ of dimension $n_p,n_q \ge 2$ respectively.
Then the ALS $\als{A}$ from Proposition~\ref{pro:mr.mul2} for $pq$
is \emph{minimal} of dimension $n = n_p + n_q - 1$.
\end{proposition}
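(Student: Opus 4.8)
The plan is to verify the minimality criterion of Proposition~\ref{pro:mr.cohn94.47}: I would show that the left family $s=A\inv v$ and the right family $t=uA\inv$ of the ALS $\als{A}=(u,A,v)$ produced by Proposition~\ref{pro:mr.mul2} are each $\field{K}$-linearly independent. First I would record the shape of $\als{A}$: since $p$ is a polynomial, $\als{A}_p=(1,A_p,\lambda_p)$ fits the block pattern of Proposition~\ref{pro:mr.mul2} with a first block of size one, $a=1$ and $a'=0$; then the system matrix $A$ is again upper unitriangular and $v=\lambda_q e_n$, so $\als{A}=(1,A,\lambda_q)$ is a \emph{polynomial} ALS of $pq$ of dimension $n=n_p+n_q-1$. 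In particular $A_p\inv$ and $A_q\inv$ are polynomial matrices (inverses of upper unitriangular matrices), so $L(\als{A}_p)$, $R(\als{A}_p)$, $L(\als{A}_q)$, $R(\als{A}_q)$ consist of polynomials; I would also use that, for a minimal polynomial ALS, the left (resp.\ right) family consists of polynomials of degree at most that of the represented polynomial (the prefix/suffix shape, cf.~\cite{Schrempf2017b9}), together with the fact that $\freeALG{\field{K}}{X}$ is a domain on which $\degree$ is additive.

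For the left family, write $A_p\inv v_p$ (the left family of $\als{A}_p$) with first entry $p$, middle block $s_p'$ and last entry $\lambda_p$, and let $s_q=A_q\inv v_q$ be the left family of $\als{A}_q$. A direct check of $As=v$ then gives that $s$ has first entry $pq$, middle block $s_p'q$ and lower block $s_q$ (here $u_q s_q=(s_q)_1=q$ is used). By minimality of $\als{A}_p$ the set $\{p\}\cup\{(s_p')_j\}$ is linearly independent and spans, together with $1$, the space $L(\als{A}_p)$; hence $V:=\linsp(\{p\}\cup\{(s_p')_j\})$ has dimension $n_p-1$ and $V\cap\field{K}=\{0\}$. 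Right multiplication by $q\neq 0$ is a $\field{K}$-linear injection of $\field{F}$, so $\{pq\}\cup\{(s_p')_jq\}$ is linearly independent and spans $Vq$; and $\{(s_q)_i\}$ is linearly independent of size $n_q$ by minimality of $\als{A}_q$. As $(n_p-1)+n_q=n$, it remains to show $Vq\cap L(\als{A}_q)=\{0\}$: if $0\neq hq\in L(\als{A}_q)$ with $0\neq h\in V$, then $h$ and $hq$ are polynomials and $\degree(h)+\degree(q)=\degree(hq)\leq\degree(q)$ forces $h\in\field{K}\setminus\{0\}$, contradicting $V\cap\field{K}=\{0\}$.

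For the right family I would argue symmetrically. From $A_p\inv v_p=\lambda_p A_p\inv e_{n_p}$ one reads off $p=\lambda_p(A_p\inv)_{1,n_p}=\lambda_p(t_p)_{n_p}$, so the last entry of the right family $t_p=u_pA_p\inv$ of $\als{A}_p$ is $\lambda_p\inv p$; writing $t_p'$ for its middle block, a direct check of $tA=u$ gives that $t$ has first entry $1$, middle block $t_p'$ and lower block $p\,t_q$, where $t_q=u_qA_q\inv$ is the right family of $\als{A}_q$ (with $(t_q)_1=1$). By minimality of $\als{A}_p$, $\{1\}\cup\{(t_p')_j\}$ is linearly independent and forms, together with $\lambda_p\inv p$, a basis of $R(\als{A}_p)$; so $W:=\linsp(\{1\}\cup\{(t_p')_j\})$ has dimension $n_p-1$ and $p\notin W$. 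Left multiplication by $p\neq 0$ is injective, so $\{p(t_q)_i\}$ is linearly independent of size $n_q$, and it remains to show $W\cap p\,R(\als{A}_q)=\{0\}$: if $0\neq pc\in W$ with $0\neq c\in R(\als{A}_q)$, then $pc\in R(\als{A}_p)$ and $\degree(p)+\degree(c)=\degree(pc)\leq\degree(p)$ forces $c\in\field{K}\setminus\{0\}$, whence $p=c\inv(pc)\in W$, a contradiction. Then both families are linearly independent, so $\als{A}$ is minimal.

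The step I expect to be the crux is recognising the factored form of the two families---that the middle block of $s$ is $s_p'$ right-multiplied by $q$ and that of $t$ is $t_p'$ with lower block $p\,t_q$---which turns minimality into the two intersection statements $Vq\cap L(\als{A}_q)=\{0\}$ and $W\cap p\,R(\als{A}_q)=\{0\}$; after that the argument rests only on the degree bounds for the families of a minimal polynomial ALS and on additivity of $\degree$ in $\freeALG{\field{K}}{X}$. (This is the place taken, in the alternative argument routed through Proposition~\ref{pro:mr.mul1}, by Lemma~\ref{lem:mr.slinin}.)
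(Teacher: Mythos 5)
Your argument is correct, but it cannot be matched against a proof in this paper, because the paper gives none: Proposition~\ref{pro:mr.minmul} is imported verbatim from \cite{Schrempf2017b9}, and the only hint offered here is the remark that an alternative proof in \cite{Schrempf2017c2} rests on Lemma~\ref{lem:mr.slinin}, i.e.\ on left-family independence for the type~$(*,1)$ product of Proposition~\ref{pro:mr.mul1}. Your route verifies the type~$(1,*)$ product of Proposition~\ref{pro:mr.mul2} directly: the identifications $s=(pq,\,s_p'q,\,s_q)$ and $t=(1,\,t_p',\,p\,t_q)$ are exactly right (uniqueness of the solutions of $As=v$ and $tA=u$ over $\field{F}$, together with $u_q s_q=q$ and $(t_p)_{n_p}=\lambda_p\inv p$, gives them at once), and reducing minimality via Proposition~\ref{pro:mr.cohn94.47} to the two intersection statements $Vq\cap L(\als{A}_q)=\{0\}$ and $W\cap p\,R(\als{A}_q)=\{0\}$, settled by additivity of $\degree$ on $\freeALG{\field{K}}{X}$, treats both families symmetrically --- which the route through Lemma~\ref{lem:mr.slinin} does not give for free, since that lemma only covers one family. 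The single dependency you should make explicit is the degree bound you cite: that the left (right) family of a \emph{minimal} polynomial ALS consists of polynomials of degree at most $\degree q$ (respectively $\degree p$). That fact is true but not stated in the present paper; if you prefer not to lean on \cite{Schrempf2017b9}, it can be derived from what is quoted here: by Theorem~\ref{thr:mr.cohn99.14} the left family of a minimal representation is, up to an invertible $\field{K}$-linear change, a subfamily of that of \emph{any} equivalent representation, so $L(q)$ lies in the span of the left family of the representation of $q$ obtained from the monomial systems of Proposition~\ref{pro:mr.minmon} via the sum in Proposition~\ref{pro:mr.ratop}, whose entries are $q$, suffixes of its monomials and constants, all of degree at most $\degree q$ (and symmetrically on the right). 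With that supplied, your proof is complete and self-contained relative to this paper.
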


\begin{theorem}[Minimal Inverse
\protect{\cite[Theorem~4.13]{Schrempf2017a9}% MR3864857 0218-1967
}]\label{thr:mr.mininv}
Let $f \in \field{F} \setminus \field{K}$ be given by
the \emph{minimal} admissible linear system $\als{A} = (u, A, v)$
of dimension $n$.
Then a \emph{minimal} ALS for $f\inv$ is given
in the following way:

\smallskip\noindent
$f$ of type $(1,1)$ yields $f\inv$ of type $(0,0)$ with $\dim(\als{A}') = n-1$:
\begin{displaymath}
\als{A}'=\left(1, 
\begin{bmatrix}
-\lambda \perm b'' & -\perm B \perm \\
-\lambda b & - b'\perm
\end{bmatrix},
1 \right) \quad\text{for}\quad
\als{A} = \left(1, 
\begin{bmatrix}
1 & b' & b \\
. & B & b'' \\
. & . & 1
\end{bmatrix},
\lambda \right).
\end{displaymath}
$f$ of type $(1,0)$ yields $f\inv$ of type $(1,0)$ with $\dim(\als{A}') = n$:
\begin{displaymath}
\als{A}' = \left(1,
\begin{bmatrix}
1 & - \frac{1}{\lambda} c & - \frac{1}{\lambda} c' \perm \\
. & -\perm b'' & -\perm B \perm \\
. & -b & -b'\perm 
\end{bmatrix},
1 \right)
\quad\text{for}\quad
\als{A} = \left(1,
\begin{bmatrix}
1 & b' & b \\
. & B & b'' \\
. & c' & c
\end{bmatrix},
\lambda \right).
\end{displaymath}
$f$ of type $(0,1)$ yields $f\inv$ of type $(0,1)$ with $\dim(\als{A}') = n$:
\begin{displaymath}
\als{A}' = \left( 1,
\begin{bmatrix}
-\lambda \perm b'' & -\perm B \perm & -\perm a' \\
-\lambda b & -b'\perm & -a \\
. & . & 1 
\end{bmatrix},
1 \right)
\quad\text{for}\quad
\als{A} = \left(1,
\begin{bmatrix}
a & b' & b \\
a' & B & b'' \\
. & . & 1
\end{bmatrix},
\lambda \right).
\end{displaymath}
$f$ of type $(0,0)$ yields $f\inv$ of type $(1,1)$ with $\dim(\als{A}') = n+1$:
\begin{displaymath}
\als{A}' = \left(1,
\begin{bmatrix}
\perm v & -\perm A \perm \\
. & u \perm
\end{bmatrix},
1 \right).
\end{displaymath}
(Recall that the permutation matrix $\perm$ reverses the order of rows/columns.)
\end{theorem}

\begin{corollary}
Let $p \in \freeALG{\field{K}}{X}$ with $\rank p = n \ge 2$.
Then $\rank(p\inv) = n-1$.
\end{corollary}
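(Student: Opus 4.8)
The plan is to read off the rank of $p^{-1}$ directly from the minimal inverse construction in Theorem~\ref{thr:mr.mininv}. First I would observe that a non-zero polynomial $p \in \freeALG{\field{K}}{X}$ is always of type $(1,1)$: by Proposition~\ref{pro:mr.minmon} (more generally by the existence of a \emph{polynomial} ALS, Definition~\ref{def:mr.psals}) $p$ has a minimal linear representation in which $v = [0,\ldots,0,\lambda]\trp$ and the system matrix is upper unitriangular, so $1 \in L(p)$, and dually (or by transposing) $1 \in R(p)$. Hence the hypothesis $\rank p = n \ge 2$ places us squarely in the first case of Theorem~\ref{thr:mr.mininv}.

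Next I would simply invoke that case: for $f = p$ of type $(1,1)$ with minimal ALS of dimension $n$, the theorem produces a \emph{minimal} ALS $\als{A}'$ for $p^{-1}$ of dimension $n-1$. Since the ALS so produced is asserted to be minimal, its dimension equals the rank by Definition~\ref{def:mr.rep}, giving $\rank(p^{-1}) = n - 1 = \rank(p) - 1$. This is essentially immediate once the type of $p$ is pinned down, so the corollary is really a one-line consequence; the only content is the type observation and the citation of the right branch of the inverse theorem.

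The main (minor) obstacle is just making the type claim airtight: one must be sure that $p \ne 0$ forces both $1 \in L(p)$ and $1 \in R(p)$, not merely for the particular ALS from Proposition~\ref{pro:mr.minmon} but as a property of the element (as required by Definition~\ref{def:mr.typele}, which quantifies over \emph{some} minimal representation, so exhibiting one such representation suffices). For a monomial this is explicit; for a general polynomial one uses that it admits a minimal polynomial ALS (its upper-unitriangular system matrix has last row $[0,\ldots,0,1]$, so $1$ lies in the left family, and passing to the transpose representation $(v\trp, A\trp, u\trp)$ — which represents the same scalar and is again minimal — shows $1$ lies in the right family as well). I would also note in passing the degenerate boundary: the hypothesis $n \ge 2$ is exactly what is needed for $p \notin \field{K}$, so Theorem~\ref{thr:mr.mininv} applies and $n-1 \ge 1$, consistent with $p^{-1} \notin \field{K}$ having positive rank.
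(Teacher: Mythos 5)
Your proposal follows essentially the same route as the paper: a non-zero polynomial (not in $\field{K}$) is of type $(1,1)$ --- a fact the paper itself records in the remark preceding Proposition~\ref{pro:mr.mul2} --- so the first case of Theorem~\ref{thr:mr.mininv} hands you a \emph{minimal} ALS for $p\inv$ of dimension $n-1$, and the rank statement is immediate from Definition~\ref{def:mr.rep}.

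One caveat on your justification of $1 \in R(p)$: the ``transpose representation'' $(v\trp, A\trp, u\trp)$ does \emph{not} represent $p$ over $\freeALG{\field{K}}{X}$, because for matrices with non-commuting entries $(A\inv)\trp \neq (A\trp)\inv$ in general; what it represents is the polynomial with all words reversed (e.g.\ for the monomial $xy$ one obtains $yx$). Since word reversal need not fix $p$, this detour does not prove $1 \in R(p)$. It is also unnecessary: in a minimal polynomial ALS the system matrix is upper unitriangular, so its first column is $e_1\trp$, and from $tA = u = e_1$ one reads off $t_1 = 1$ directly, giving $1 \in R(p)$ from the very same representation that gives $1 \in L(p)$ via $s_n = \lambda \neq 0$. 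With that repair the argument is complete and coincides with the paper's intended (one-line) proof.
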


\begin{corollary}\label{cor:mr.rank1}
Let $0 \neq f \in \field{F}$.
Then $f \in \field{K}$ if and only if $\rank(f) = \rank(f\inv) = 1$.
\end{corollary}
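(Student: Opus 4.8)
The plan is to dispose of the trivial implication first and then extract enough from the two rank hypotheses to force $f$ to be a scalar.

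First I would treat the ``only if'' direction. If $0 \neq f \in \field{K}$, then $([1],[1],[f])$ is a linear representation of $f$ of dimension $1$; since by Definition~\ref{def:mr.rep} a dimension-$0$ representation represents only $0 \in \field{F}$, this representation is already minimal, so $\rank f = 1$. As $f \in \field{K}\setminus\{0\}$ forces $f\inv \in \field{K}\setminus\{0\}$, the same argument gives $\rank(f\inv) = 1$.

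For the ``if'' direction, assume $\rank f = \rank(f\inv) = 1$. I would start from a \emph{minimal} linear representation $(u,A,v)$ of $f$ of dimension $1$. Then $u,v \in \field{K}$ and $A = A_0 \otimes 1 + A_1 \otimes x_1 + \ldots + A_d \otimes x_d$ with $A_\ell \in \field{K}$; the fullness condition in Definition~\ref{def:mr.rep} just says $A \neq 0$ for a $1\times 1$ matrix (cf.\ Definition~\ref{def:mr.full}), so $A$ is a nonzero polynomial of degree at most $1$ and $f = uv\,A\inv$ with $uv \neq 0$ (because $f \neq 0$). Doing the same for $f\inv$ yields $f\inv = u'v'\,(A')\inv$ with $u'v' \neq 0$ and $A'$ a nonzero polynomial of degree at most $1$. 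Multiplying the two identities gives $A A' = (uv)(u'v') =: \mu \in \field{K}\setminus\{0\}$.

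The decisive step is then a degree count in $\freeALG{\field{K}}{X}$. Writing $A = A^{(0)} + A^{(1)}$ and $A' = A'^{(0)} + A'^{(1)}$ for the decompositions into homogeneous parts of degree $0$ and $1$, the degree-$2$ part of $A A'$ equals $A^{(1)}A'^{(1)}$, and this must vanish since $A A' = \mu$ is constant; as distinct words $x_i x_j$ are linearly independent in $\freeALG{\field{K}}{X}$, this forces $A^{(1)} = 0$ or $A'^{(1)} = 0$. In the first case $A = A^{(0)} \in \field{K}\setminus\{0\}$, hence $f = uv\,A\inv \in \field{K}$; in the second case $A' \in \field{K}\setminus\{0\}$, so $A = \mu (A')\inv \in \field{K}\setminus\{0\}$ and again $f = uv\,A\inv \in \field{K}$. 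There is no real obstacle here; the only point that needs a little care is reading off the shape of a dimension-$1$ linear representation from Definition~\ref{def:mr.rep}, everything else being elementary.
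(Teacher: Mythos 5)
Your proof is correct, but it takes a genuinely different route from the paper. The paper obtains this statement as an immediate consequence of Theorem~\ref{thr:mr.mininv}: for $f \in \field{F}\setminus\field{K}$ the four cases of the minimal inverse show that either $\rank f \ge 2$ (types $(1,1)$, $(1,0)$, $(0,1)$) or $f$ is of type $(0,0)$, in which case $\rank(f\inv) = \rank f + 1 \ge 2$; so the two rank-one hypotheses cannot hold simultaneously unless $f \in \field{K}$. You instead argue directly from Definition~\ref{def:mr.rep}: a dimension-one representation forces $f = uv\,A\inv$ with $A$ a nonzero linear polynomial, and multiplying the two representations of $f$ and $f\inv$ gives $AA' = \mu \in \field{K}\setminus\{0\}$, after which the vanishing of the degree-two homogeneous component (linear independence of the words $x_ix_j$, i.e.\ essentially that $\freeALG{\field{K}}{X}$ is a domain whose units are the nonzero scalars) forces $A$ or $A'$ to be scalar. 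Your argument is elementary and self-contained, using nothing beyond the definitions of rank and fullness of a $1\times 1$ matrix; the paper's route is a one-line corollary of machinery it has already established and yields additional information (the exact type and rank of $f\inv$ in each case). Both the trivial direction (minimality of a dimension-one representation of a nonzero scalar because only $0$ has a dimension-zero representation) and the commutation of the scalars $u,v,u',v'$ with $A\inv$, $A'\inv$ are handled correctly in your write-up.
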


\begin{remark}
Notice that $n\ge 2$ for type $(1,1)$, $(1,0)$ and $(0,1)$.
The block $B$ is always square of size $n-2$.
For $n=2$ the system matrix of $\als{A}$ is
\begin{itemize}
\item
  $\bigl[\begin{smallmatrix} 1 & b \\ . & 1 \end{smallmatrix}\bigr]$
  for type $(1,1)$,
\item
  $\bigl[\begin{smallmatrix} 1 & b \\ . & c \end{smallmatrix}\bigr]$
  for type $(1,0)$ and
\item
  $\bigl[\begin{smallmatrix} a & b \\ . & 1 \end{smallmatrix}\bigr]$
  for type $(0,1)$.
\end{itemize}
\end{remark}

\section{A Standard Form}\label{sec:mr.standardform}

After providing a ``language'' to be able to formulate
``operations'' on the system matrix of an admissible linear
system we can define a \emph{standard form} (Definition~\ref{def:mr.stdals}).
A standard ALS will be \emph{minimal} and (has) \emph{refined} (pivot blocks).
In the following section we will minimize a refined ALS.
This is somewhat technical to describe but simple (we
need to solve \emph{linear} systems of equations).
At the end of this section we will illustrate how to
refine pivot blocks. To the contrary this is easy
to describe but in general very difficult to accomplish
(we need to solve \emph{polynomial} systems of equations).
Since the latter is closely related to factorization we
refer to 
\cite{Schrempf2017b9}% JSC2018 0747-7171
\ and
\cite{Schrempf2017c9}% IEJA001 1306-6048
\ for further information.

\begin{definition}[Pivot Blocks, Block Transformation]\label{def:mr.pivot}
Let $\als{A} = (u,A,v)$ be an admissible linear system
and denote $A = (A_{ij})_{i,j=1}^m$ the block decomposition
(with square diagonal blocks $A_{ii}$) with \emph{maximal} $m$
such that $A_{ij}=0$ for $i > j$.
The diagonal blocks $A_{ii}$ are called \emph{pivot blocks},
the number $m$ is denoted by $\pivot(\als{A}) = \pivot(A)$.
The \emph{dimension} (or \emph{size})
of a pivot block $A_{ii}$ for $i \in \{ 1, 2, \ldots, m \}$
is $\dim_i (\als{A})$.
For $i<1$ or $i>m$ let $\dim_i(\als{A}) = 0$.
A (admissible) transformation $(P,Q)$ is called
(admissible) \emph{block transformation} (for $\als{A}$)
if $P_{ij} = Q_{ij} = 0$ for $i > j$
(with respect to the block structure of $\als{A}$).
\end{definition}

\begin{notation}
Let $\als{A} = (u,A,v)$ be an ALS with $m$ pivot blocks of size
$n_i = \dim_i(\als{A})$.
We denote by 
$n_{i:j} = \dim_{i:j}(\als{A}) = n_i + n_{i+1} + \ldots + n_j$
the sum of the sizes of the pivot blocks $A_{ii}$ to $A_{jj}$
(with the convention $n_{i:j} = 0$ for $j < i$).
For a given system the identity matrix of size $n_{i:j}$
is denoted by $I_{i:j}$.
If $(P,Q)$ is an admissible transformation for $\als{A}$
then $(PAQ)_{ij}$ denotes the (to the block decomposition of $A$
corresponding) block $(i,j)$ of size $n_i \times n_j$ in $PAQ$,
$(Pv)_{\block{i}}$ that of size $n_i \times 1$ in $Pv$ and
$(uQ)_{\block{j}}$ that of size $1 \times n_j$ in $uQ$.
\end{notation}

\ifJOURNAL
\medskip
\fi

\begin{notation}
Components in the left family $s = A\inv v$ are (as usual)
denoted by $s_i$. The $j$-th component for $1 \le j \le \dim_i(\als{A})$
of the $i$-th block for $1 \le i \le \pivot(\als{A})$ is
denoted by $s_{\block{i}(j)}$.
A subfamily of $s$ with respect to the pivot block $k$
is denoted by $s_{\block{k}}$,
$s_{\block{i:j}} = (s_{\block{i}}, s_{\block{i+1}}, \ldots, s_{\block{j}})$.
Analogous is used for the right family $t =u A\inv$.
\end{notation}

\ifJOURNAL
\medskip
\fi

\begin{notation}
A ``grouping'' of pivot blocks $\{ i, i+1, \ldots, j \}$ 
of the system matrix is denoted by $A_{i:j,i:j}$.
If it is clear from the context where the block ends (respectively starts)
we write $A_{i:,i:}$ (respectively $A_{:j,:j}$),
in particular with respect to a given pivot block.
For example $A_{1:,1:}$, $A_{k,k}$ and $A_{:m,:m}$.
\end{notation}

\begin{definition}[Admissible Pivot Block Transformation]\label{def:mr.pivottrn}
Let $\als{A} = (u,A,v)$ be an ALS with $m = \pivot(\als{A})$
pivot blocks of size $n_i = \dim_i(\als{A})$.
An admissible transformation $(P,Q)$ of the form
$( I_{1:k-1}\oplus \bar{T} \oplus I_{k+1:m}, I_{1:k-1}\oplus \bar{U} \oplus I_{k+1:m})$
with $\bar{T},\bar{U} \in \field{K}^{n_k \times n_k}$ is called
\emph{admissible for pivot block $k$} or (admissible)
\emph{$k$-th pivot block transformation}.
\end{definition}

\begin{definition}[Refined Pivot Block and Refined ALS]\label{def:mr.redpivot}
Let $\als{A} = (u,A,v)$ be an ALS with $m = \pivot(\als{A})$
pivot blocks of size $n_i = \dim_i(\als{A})$.
A pivot block $A_{kk}$ (for $1 \le k \le m$)
is called \emph{refined}
if there does not exist an admissible pivot block transformation
$(P,Q)_k$ such that $(PAQ)_{kk}$ has a lower left
block of zeros of size $i \times (n_k -i)$
for an $i \in \{1, 2, \ldots, n_k-1\}$.
The admissible linear system $\als{A}$ is called \emph{refined}
if all pivot blocks are refined.
\end{definition}

\begin{remarks}
That the ``form'' of a refined ALS is not unique,
it will be illustrated in the following example.
Moreover, a refined ALS is not necessarily refined over
$\aclo{\field{K}}$: Let
\begin{displaymath}
\als{A} = \left(
\begin{bmatrix}
1 & . 
\end{bmatrix},
\begin{bmatrix}
1 & x \\
2x & 1 
\end{bmatrix},
\begin{bmatrix}
. \\ 1
\end{bmatrix}
\right).
\end{displaymath}
Adding $\sqrt{2}$-times row~1 to row~2 and
subtracting $\sqrt{2}$-times column~2 of column~1
yields
\begin{displaymath}
\begin{bmatrix}
1 -\sqrt{2} x & x \\
0 & 1 + \sqrt{2}x
\end{bmatrix}
s =
\begin{bmatrix}
. \\ 1
\end{bmatrix}.
\end{displaymath}
See also 
\cite[Example~37]{Schrempf2017b9}% JSC2018 0747-7171
.
\end{remarks}

\begin{Example}\label{ex:mr.verfeinert}
Let $f = (y\inv - x)\inv$ be given by the minimal ALS
\begin{displaymath}
\als{A}_f = \left(
\begin{bmatrix}
1 & .
\end{bmatrix},
\begin{bmatrix}
1 & -y \\
-x & 1
\end{bmatrix},
\begin{bmatrix}
. \\ 1
\end{bmatrix}
\right).
\end{displaymath}
For $f +3z$ we have the (minimal) ALS
\begin{displaymath}
\als{A} = \left(
\begin{bmatrix}
1 & . & . & . 
\end{bmatrix},
\begin{bmatrix}
1 & -y & -1 & . \\
-x & 1 & x & . \\
. & . & 1 & -z \\
. & . & . & 1 
\end{bmatrix},
\begin{bmatrix}
. \\ 1 \\ . \\ 3
\end{bmatrix}
\right).
\end{displaymath}
Since $1 \in R(\als{A})$ and $\als{A}$ is constructed by
the addition in Proposition~\ref{pro:mr.ratop}, it can
easily be transformed ---in a controlled way--- into
another ALS with \emph{refined} pivot block structure.
Firstly we add column~3 to column~1,
\begin{displaymath}
\als{A}' = \left(
\begin{bmatrix}
1 & . & . & .
\end{bmatrix},
\begin{bmatrix}
0 & -y & -1 & . \\
0 & 1 & x & . \\
1 & . & 1 & -z \\
. & . & . & 1 
\end{bmatrix},
\begin{bmatrix}
. \\ 1 \\ . \\ 3
\end{bmatrix}
\right),
\end{displaymath}
and then we exchange rows~1 and~3:
\begin{displaymath}
\als{A}'' = \left(
\begin{bmatrix}
1 & . & . & .
\end{bmatrix},
\begin{bmatrix}
1 & . & 1 & -z \\
. & 1 & x & . \\
. & -y & -1 & . \\
. & . & . & 1 
\end{bmatrix},
\begin{bmatrix}
. \\ 1 \\ . \\ 3
\end{bmatrix}
\right).
\end{displaymath}
\end{Example}

\begin{definition}[Block Decomposition of an ALS, Block Row and Column Transformation]
\label{def:mr.bloals}
Let $\als{A} = (u,A,v)$ be an ALS of dimension~$n$
with $m = \pivot(\als{A}) \ge 2$ pivot blocks of size $n_i = \dim_i(\als{A})$.
For $1 \le k \le m$ the \emph{block decomposition}
with respect to the pivot block~$k$ is the system
\begin{displaymath}
\als{A}^{[\block{k}]} = \left(
\begin{bmatrix}
u_{\block{1:}} & . & . 
\end{bmatrix},
\begin{bmatrix}
A_{1:,1:} & A_{1:,k} & A_{1:,:m} \\
. & A_{k,k} & A_{k,:m} \\
. & . & A_{:m,:m}
\end{bmatrix},
\begin{bmatrix}
v_{\block{1:}} \\ v_{\block{k}} \\ v_{\block{:3}}
\end{bmatrix}
\right)
\end{displaymath}
with (square) diagonal blocks $A_{1:,1:}$, $A_{k,k}$ and $A_{:m,:m}$
of size $n_{1:k-1}$, $n_k$ respectively $n_{k+1:m}$.
($\block{k}$ is used here to emphasize that $k$ is a block index.)
By $\als{A}^{[-\block{k}]}$ we denote the ALS $\als{A}^{[\block{k}]}$
without block row/column~$k$ of dimension $n-n_k$
(not necessarily equivalent to $\als{A}^{[\block{k}]}$):
\begin{displaymath}
\als{A}^{[-\block{k}]} = \left(
\begin{bmatrix}
u_{\block{1}} & . 
\end{bmatrix},
\begin{bmatrix}
A_{1:,1:} & A_{1:,:m} \\
. & A_{:m,:m}
\end{bmatrix},
\begin{bmatrix}
v_{\block{1:}} \\ v_{\block{:3}}
\end{bmatrix}
\right).
\end{displaymath}
An admissible transformation
$(P,Q)_{\block{k}} = \bigl(P(\bar{T},T), Q(\bar{U},U)\bigr)\mthstrut_{\block{k}}$
of the form
\begin{equation}\label{eqn:mr.blorowtrn}
(P,Q)_{\block{k}} = \left(
\begin{bmatrix}
I_{1:k-1} & . & . \\
. & \bar{T} & T \\
. & . & I_{k+1:m}
\end{bmatrix},
\begin{bmatrix}
I_{1:k-1} & . & . \\
. & \bar{U} & U \\
. & . & I_{k+1:m}
\end{bmatrix}
\right)
\end{equation}
is called $k$-th \emph{block row transformation}
for $\als{A}^{[\block{k}]}$,
one of the form $\bigl(P(\bar{T},T),Q(\bar{U},U)\bigr)\mthstrut^{\block{k}}$,
\begin{equation}\label{eqn:mr.blocoltrn}
(P,Q)^{\block{k}} = \left(
\begin{bmatrix}
I_{1:k-1} & T & . \\
. & \bar{T} & . \\
. & . & I_{k+1:m}
\end{bmatrix},
\begin{bmatrix}
I_{1:k-1} & U & . \\
. & \bar{U} & . \\
. & . & I_{k+1:m}
\end{bmatrix}
\right)
\end{equation}
is called $k$-th \emph{block column transformation}
for $\als{A}^{[\block{k}]}$.
For $\bar{T}=\bar{U} = I_{n_k}$ we write also
$P(T)$ respectively $Q(U)$ and call the block transformation
\emph{particular}.
\end{definition}

\begin{definition}[Standard Admissible Linear System]\label{def:mr.stdals}
A \emph{minimal} and \emph{refined}
ALS $\als{A} = (u,A,v) = (1,A,\lambda)$,
that is, $v = [0,\ldots, 0, \lambda]$, is called
\emph{standard}.
\end{definition}

\begin{remark}
For a polynomial $p$ given by a \emph{standard} ALS
$\als{A}$ (of dimension $n \ge 2$) the minimal inverse
of $\als{A}$ (of dimension $n-1$) is refined if and
only if $\als{A}$ is obtained by the minimal polynomial
multiplication of its irreducible factors $q_i$ in
$p = q_1 q_2 \cdots q_m$.
For a detailed discussion about the factorization of polynomials
(in free associative algebras) we refer to
\cite{Schrempf2017b9}% JSC2018 0747-7171
.
One of the simplest non-trivial examples is
\begin{displaymath}
p = x(1-yx) = x-xyx = (1-xy)x.
\end{displaymath}
With respect of the general factorization theory 
it is open to show that the free field
is a ``similarity unique factorization domain''
\cite{Schrempf2017c9}% IEJA001 1306-6048
.
\end{remark}

\begin{Example}[Factorization versus Refinement]
Later, in Example~\ref{ex:mr.hua} (Hua's identity),
we will need to refine a pivot block. Although
the necessary transformations there are obvious,
the procedure should be illustrated in a systematic way.
But first we have a look on \emph{how} this $2\times 2$
block appears, namely by inverting the element given by the
ALS
\begin{displaymath}
\als{A} = \left(
\begin{bmatrix}
1 & . & . 
\end{bmatrix},
\begin{bmatrix}
x & 0 & 1 \\
. & 1 & y \\
. & x & 1
\end{bmatrix},
\begin{bmatrix}
0 \\ . \\ -1
\end{bmatrix}
\right).
\end{displaymath}
If we exchange columns~2 and~3 it is immediate that this is
the ``product'' of the admissible linear systems
\begin{displaymath}
\als{A}_1 = \left(
\begin{bmatrix}
1
\end{bmatrix},
\begin{bmatrix}
x
\end{bmatrix},
\begin{bmatrix}
-1
\end{bmatrix}
\right)\quad\text{and}\quad
\als{A}_2 = \left(
\begin{bmatrix}
1 & . 
\end{bmatrix},
\begin{bmatrix}
y & 1 \\
1 & x
\end{bmatrix},
\begin{bmatrix}
. \\ -1
\end{bmatrix}
\right).
\end{displaymath}
Applying the minimal inverse on
\begin{displaymath}
\als{A}' = \left(
\begin{bmatrix}
1 & . & . 
\end{bmatrix},
\begin{bmatrix}
x & 1 & 0 \\
. & y & 1 \\
. & 1 & x
\end{bmatrix},
\begin{bmatrix}
0 \\ . \\ -1
\end{bmatrix}
\right),
\end{displaymath}
we get a \emph{refined} (and minimal) ALS, namely
\begin{displaymath}
\als{A}'' = \left(
\begin{bmatrix}
1 & . & . & . 
\end{bmatrix},
\begin{bmatrix}
-1 & -x & -1 & . \\
. & -1 & -y & . \\
. & . & -1 & -x \\
. & . & . & 1
\end{bmatrix},
\begin{bmatrix}
. \\ . \\ . \\ 1
\end{bmatrix}
\right).
\end{displaymath}
The factorization here is really simple.
\end{Example}

\begin{Example}[Pivot Block Refinement]
Now we focus on the refinement of the second pivot block in
the ALS
\begin{displaymath}
\als{A} = \left(
\begin{bmatrix}
1 & . & . & .
\end{bmatrix},
\begin{bmatrix}
1 & 1 & x & . \\
. & y & 1 & . \\
. & 1 & 0 & x \\
. & . & . & 1
\end{bmatrix},
\begin{bmatrix}
. \\ . \\ . \\ 1
\end{bmatrix}
\right).
\end{displaymath}
(This is the ALS~\eqref{eqn:mr.hua2} with the first three rows
scaled by $-1$.)
We are looking for an admissible transformation $(P,Q)$ of the form
\begin{displaymath}
(P,Q) = \left(
\begin{bmatrix}
1 & . & . & . \\
. & \alpha_{2,2} & \alpha_{2,3} & . \\
. & \alpha_{3,2} & \alpha_{3,3} & . \\
. & . & . & 1
\end{bmatrix},
\begin{bmatrix}
1 & . & . & . \\
. & \beta_{2,2} & \beta_{2,3} & . \\
. & \beta_{3,2} & \beta_{3,3} & . \\
. & . & . & 1
\end{bmatrix}
\right).
\end{displaymath}
In particular these matrices $P$ and $Q$ have to be
\emph{invertible}, that is, we need the conditions
$\det(P) \neq 0$ and $\det(Q) \neq 0$.
To create a lower left $1 \times 1$ block of zeros in
$(PAQ)_{2,2}$ we need to solve the following \emph{polynomial}
system of equations (with \emph{commuting} unknowns $\alpha_{ij}$
and $\beta_{ij}$):
\begin{align*}
\alpha_{2,2} \alpha_{3,3} - \alpha_{2,3} \alpha_{3,2} &= 1, \\
\beta_{2,2} \beta_{3,3} - \beta_{2,3} \beta_{3,2} &= 1, \\
\alpha_{3,2} \beta_{3,2} + \alpha_{3,3}\beta_{2,2} &= 0
  \quad\text{for $1$, and}\\
\alpha_{3,2} \beta_{2,2} &= 0
  \quad\text{for $y$}.
\end{align*}
We obtain the last two equations by multiplication
of the transformation blocks with the corresponding
coefficient matrices of the pivot blocks
(irrelevant equations are marked with ``$*$'' on the right hand side)
\begin{align*}
\begin{bmatrix}
\alpha_{2,2} & \alpha_{2,3} \\
\alpha_{3,2} & \alpha_{3,3}
\end{bmatrix}
\begin{bmatrix}
. & 1 \\
1 & .
\end{bmatrix}
\begin{bmatrix}
\beta_{2,2} & \beta_{2,3} \\
\beta_{3,2} & \beta_{3,3}
\end{bmatrix}
&=
\begin{bmatrix}
* & * \\
0 & *
\end{bmatrix}
\quad\text{for $1$, and}\\
\begin{bmatrix}
\alpha_{2,2} & \alpha_{2,3} \\
\alpha_{3,2} & \alpha_{3,3}
\end{bmatrix}
\begin{bmatrix}
1 & . \\
. & .
\end{bmatrix}
\begin{bmatrix}
\beta_{2,2} & \beta_{2,3} \\
\beta_{3,2} & \beta_{3,3}
\end{bmatrix}
&=
\begin{bmatrix}
* & * \\
0 & *
\end{bmatrix}
\quad\text{for $y$}.
\end{align*}
To solve this system of polynomial equations,
Gr\"obner--Shirshov bases 
\cite{Bokut2000a}% MR1811792 0373-2703
\ can be used.
For detailed discussions we refer to
\cite{Sturmfels2002a}% MR1925796 book
\ or
\cite{Cox2015a}% MR3330490 book
. The basic idea comes from
\cite[Theorem~4.1]{Cohn1999a}% MR1723470 0218-1967
\ and is also used in 
\cite[Proposition~42]{Schrempf2017b9}% JSC2018 0747-7171
\ and 
\cite[Section~5]{Schrempf2017c9}% IEJA001 1306-6048
.
For further remarks on the refinement of pivot blocks
see also 
\cite[Section~4.4]{Schrempf2018c2}% X180905425 arxiv
.
\end{Example}

\section{Minimizing a refined ALS}\label{sec:mr.minimizing}

First of all we derive the \emph{left} respectively \emph{right}
block minimization equations.
For that we consider an admissible linear system
$\als{A} = (u,A,v)$ of dimension~$n$ with $m = \pivot(\als{A}) \ge 2$
pivot blocks of size $n_i = \dim_i(\als{A})$.
For $1 \le k < m$ we transform this system using the block row
transformation
$(P,Q) = \bigl(P(\bar{T},T), Q(\bar{U},U)\bigr)\mthstrut_{\block{k}}$,
namely
\begin{align*}
PAQ &= 
\begin{bmatrix}
I_{1:k-1} & . & . \\
. & \bar{T} & T \\
. & . & I_{k+1:m}
\end{bmatrix}
\begin{bmatrix}
A_{1:,1:} & A_{1:,k} & A_{1:,:m} \\
. & A_{k,k} & A_{k,:m} \\
. & . & A_{:m,:m}
\end{bmatrix}
\begin{bmatrix}
I_{1:k-1} & . & . \\
. & \bar{U} & U \\
. & . & I_{k+1:m}
\end{bmatrix}\\
&=
\begin{bmatrix}
A_{1:,1:} & A_{1:,k} & A_{1:,:m} \\
. & \bar{T} A_{k,k} & \bar{T} A_{k,:m} + T A_{:m,:m} \\
. & . & A_{:m,:m}
\end{bmatrix}
\begin{bmatrix}
I_{1:k-1} & . & . \\
. & \bar{U} & U \\
. & . & I_{k+1:m}
\end{bmatrix}\\
&=
\begin{bmatrix}
A_{1:,1:} & A_{1:,k} \bar{U} & A_{1:,k} U + A_{1:,:m} \\
. & \bar{T} A_{k,k} \bar{U} & \bar{T} A_{k,k} U + \bar{T} A_{k,:m} + T A_{:m,:m} \\
. & . & A_{:m,:m}
\end{bmatrix}\quad\text{and}\\
Pv &= 
\begin{bmatrix}
I_{1:k-1} & . & . \\
. & \bar{T} & T \\
. & . & I_{k+1:m}
\end{bmatrix}
\begin{bmatrix}
v_{\block{1:}} \\ v_{\block{k}} \\ v_{\block{:m}}
\end{bmatrix}
=
\begin{bmatrix}
v_{\block{1:}} \\ \bar{T} v_{\block{k}} + T v_{\block{:m}} \\ v_{\block{:m}}
\end{bmatrix}.
\end{align*}
Now we can read off a 
\emph{sufficient} condition for
$(Q\inv s)_{\block{k}} = 0^{n_k \times 1}$, namely
the existence of matrices $T,U \in \field{K}^{n_k \times n_{k+1:m}}$ and
invertible matrices $\bar{T},\bar{U} \in\field{K}^{n_k \times n_k}$
such that
\begin{displaymath}
\bar{T} A_{k,k} U + \bar{T} A_{k,:m} + T A_{:m,:m} = 0^{n_k \times n_{k+1:m}}
\quad\text{and}\quad
\bar{T} v_{\block{k}} + T v_{\block{:m}} = 0^{n_k \times 1}.
\end{displaymath}
Since $\bar{T}$ is invertible (as a diagonal block of an
invertible matrix $P$), this condition is equivalent to the
existence of matrices $T',U \in \field{K}^{n_k \times n_{k+1:m}}$
such that
\begin{equation}\label{eqn:mr.lmsys2}
A_{k,k} U + A_{k,:m} + \underbrace{\bar{T}\inv T}_{=: T'} A_{:m,:m}
  = 0^{n_k \times n_{k+1:m}}
\quad\text{and}\quad
v_{\block{k}} + \underbrace{\bar{T}\inv T}_{=: T'} v_{\block{:m}} = 0^{n_k \times 1}.
\end{equation}
Applying the block column transformation
$(P,Q) = \bigl(P(\bar{T},T), Q(\bar{U},U)\bigr)\mthstrut^{\block{k}}$,
we obtain
\begin{align*}
PAQ &= 
\begin{bmatrix}
I_{1:k-1} & T & . \\
. & \bar{T} & . \\
. & . & I_{k+1:m}
\end{bmatrix}
\begin{bmatrix}
A_{1:,1:} & A_{1:,k} & A_{1:,:m} \\
. & A_{k,k} & A_{k,:m} \\
. & . & A_{:m,:m}
\end{bmatrix}
\begin{bmatrix}
I_{1:k-1} & U & . \\
. & \bar{U} & . \\
. & . & I_{k+1:m}
\end{bmatrix}\\
&=
\begin{bmatrix}
A_{1:,1:} & A_{1:,k} +T A_{k,k}& A_{1:,:m} + T A_{k,:m} \\
. & \bar{T} A_{k,k} & \bar{T} A_{k,:m} \\
. & . & A_{:m,:m}
\end{bmatrix}
\begin{bmatrix}
I_{1:k-1} & U & . \\
. & \bar{U} & . \\
. & . & I_{k+1:m}
\end{bmatrix}\\
&=
\begin{bmatrix}
A_{1:,1:} & A_{1:,1:} U + A_{1:,k} \bar{U} + T A_{k,k} \bar{U} & A_{1:,:m} + T A_{k,:m} \\
. & \bar{T} A_{k,k} \bar{U} & \bar{T} A_{k,:m} \\
. & . & A_{:m,:m}
\end{bmatrix}
\end{align*}
and therefore 
a \emph{sufficient} condition for $(t P\inv)_{\block{k}} = 0^{1 \times n_k}$,
namely the existence of matrices $T,U' \in \field{K}^{n_{1:k-1} \times n_k}$
such that
\begin{equation}\label{eqn:mr.rmsys2}
A_{1:,1:} \underbrace{U \bar{U}\inv}_{=:U'} + A_{1:,k} + T A_{k,k}
  = 0^{n_{1:k-1} \times n_k}.
\end{equation}

\begin{remark}
A variant of the \emph{linear} system of equations~\eqref{eqn:mr.lmsys2}
also appears in 
\cite[Lemma~2.3]{Schrempf2017a9}% MR3864857 0218-1967
\ and
\cite[Theorem~2.4]{Schrempf2017a9}% MR3864857 0218-1967
\ (linear word problem).
\end{remark}

\begin{Remark}[Extended ALS]\label{rem:mr.extals}
In some cases it is necessary to use an \emph{extended} ALS
to be able to apply all necessary \emph{left} minimization steps,
for example, for $f\inv f$ if $f$ is of type~$(1,1)$.
Let $\als{A} = (u,A,v) = (1,A,\lambda)$ be an ALS with
$m = \pivot(\als{A}) \ge 2$ pivot blocks and $k=1$.
The ``extended'' block decomposition is then
(the block row $A_{1:,1:}$ vanishes)
\begin{displaymath}
\als{A}^{[\block{k}]} = \left(
\hspace{1.28em}\left|\hspace{-1.6em}%
\begin{bmatrix}
1 & . & .
\end{bmatrix},
\right.
\hspace{1.68em}\left|\hspace{-2em}%
\begin{bmatrix}
1 & A_{0,k} &  . \\\hline\tabstrut
. & A_{k,k} & A_{k,:m} \\
. & . & A_{:m,:m}
\end{bmatrix},
\begin{bmatrix}
. \\\hline\tabstrut . \\ v_{\block{:m}}
\end{bmatrix}
\right.
\right)
\end{displaymath}
with $A_{0,k} = [-1,0,\ldots, 0]$.
The first row in $\als{A}^{[\block{1}]}$ is only changed indirectly
(via admissible column operations) and stays scalar.
Therefore it can be removed easily (if necessary).
This is illustrated in Example~\ref{ex:mr.alg}.
\end{Remark}

\begin{notation}
Given an admissible linear system $\als{A}$,
by
$\tilde{\als{A}} = \als{A}^{[+0]}$ we denote the
(to $\als{A}$ equivalent) \emph{extended} ALS.
Conversely,
\raisebox{0pt}[0pt][0pt]{%
$\tilde{\als{A}}^{[-0]} = \bigl(\als{A}^{[+0]}\bigr)\mthstrut^{[-0]}=\als{A}$}.
The additional row and column is indexed by~0.
If $\tilde{\als{A}}$ is transformed admissibly,
$\tilde{\als{A}}^{[-0]}$ is an ALS.
\end{notation}

\begin{definition}[Minimization Equations and Transformations]
\label{def:mr.meqn}
Let $\als{A} = (u,A,v)$ be an ALS of dimension $n$ with
$m = \pivot(\als{A}) \ge 2$ pivot blocks of size $n_i = \dim_i(\als{A})$.
For $k \in \{ 1,2,\ldots, m-1 \}$ the equations~\eqref{eqn:mr.lmsys2},
\begin{displaymath}
A_{k,k} U + A_{k,:m} + T A_{:m,:m} = 0^{n_k \times n_{k+1:m}}
\quad\text{and}\quad
v_{\block{k}} + T v_{\block{:m}} = 0^{n_k \times 1}
\end{displaymath}
with respect to the block decomposition $\als{A}^{[\block{k}]}$
and the particular block row transformation
$\bigl(P(T), Q(U)\bigr)\mthstrut_{\block{k}}$
are called \emph{left block minimization equations}.
They are denoted by
$\mathcal{L}_{\block{k}} = \mathcal{L}_{\block{k}}(\als{A})$.
A solution by the block row pair $(T,U)$ is denoted by
$\mathcal{L}_{\block{k}}(T,U) = 0$.
For $k \in \{ 2,3,\ldots, m \}$ the equations~\eqref{eqn:mr.rmsys2},
\begin{displaymath}
A_{1:,1:} U + A_{1:,k} + T A_{k,k} = 0^{n_{1:k-1} \times n_k}
\end{displaymath}
with respect to the block decomposition $\als{A}^{[\block{k}]}$
and the particular block column transformation
$\bigl(P(T), Q(U)\bigr)\mthstrut^{\block{k}}$
are called \emph{right block minimization equations}.
They are denoted by
$\mathcal{R}_{\block{k}} = \mathcal{R}_{\block{k}}(\als{A})$.
A solution by the block column pair $(T,U)$ is denoted by
$\mathcal{R}_{\block{k}}(T,U) = 0$.
\end{definition}

In the following example we have a close look
on the role of the factorization and how to avoid
the use of possibly non-linear techniques.
All the steps are explained in detail and correspond
(with exception of the solution of the linear
systems of equations) to that in the following
algorithm.

\begin{Example}\label{ex:mr.alg}
For $f = x\inv (1-xy)\inv$ and $g = x$ we consider
$h = fg = (1-yx)\inv$ given by the (non-minimal) ALS
(constructed by Proposition~\ref{pro:mr.mul1})
\begin{displaymath}
\als{A} = (u,A,v) = \left(
\begin{bmatrix}
1 & . & . & .
\end{bmatrix},
\begin{bmatrix}
x & 1 & . & . \\
. & y & -1 & . \\
. & -1 & x & -x \\
. & . & . & 1
\end{bmatrix},
\begin{bmatrix}
. \\ . \\ . \\ 1
\end{bmatrix}
\right),
\end{displaymath}
whose pivot blocks are \emph{refined}.
Here there exists an admissible transformation
(with $T=0$, $U=1$ and \emph{invertible} blocks
$\bar{T},\bar{U} \in \field{K}^{3 \times 3}$)
\begin{displaymath}
(P,Q) = \left(
\begin{bmatrix}
1 & 0 & 0 & . \\
0 & 1 & 0 & . \\
1 & 0 & 1 & T \\
. & . & . & 1
\end{bmatrix},
\begin{bmatrix}
1 & 0 & 0 & . \\
0 & 1 & 0 & . \\
-1 & 0 & 1 & U \\
. & . & . & 1
\end{bmatrix}
\right),
\end{displaymath}
that yields the ALS
\begin{displaymath}
P\als{A}Q = \als{A}' = \left(
\begin{bmatrix}
1 & . & . & . 
\end{bmatrix},
\begin{bmatrix}
x & 1 & . & . \\
1 & y & -1 & -1 \\
0 & 0 & x & 0 \\
. & . & . & 1
\end{bmatrix},
\begin{bmatrix}
. \\ . \\ . \\ 1
\end{bmatrix}
\right)
\end{displaymath}
in which one can eliminate row~3 and column~3
(and ---after an appropriate row operation--- also
the last row and column).

However, minimization can be accomplished much easier:
Firstly we observe that the left subfamily $s_{\block{2:3}}$
(of $\als{A}$) is
$\field{K}$-linearly independent.
Also the right subfamily
$t_{\block{1:2}}$ is $\field{K}$-linearly independent.
For the left family with respect to the first pivot block
we consider the extended ALS
(see also Remark~\ref{rem:mr.extals})
\begin{displaymath}
\hspace{1.68em}\left|\hspace{-2em}%
\begin{bmatrix}
1 & -1 & . & . & . \\\hline\tabstrut
. & x & 1 & . & . \\
. & . & y & -1 & . \\
. & . & -1 & x & -x \\
. & . & . & . & 1
\end{bmatrix}
s =
\begin{bmatrix}
. \\\hline\tabstrut . \\ . \\ . \\ 1
\end{bmatrix}
\right.
\end{displaymath}
of $\als{A}$, where the upper row and the left column are indexed by zero.
Now we add row~3 to row~1,
subtract column~1 from column~3 and
add column~1 to column~4
(this transformation can be found by solving a linear system of equations):
\begin{displaymath}
\hspace{1.68em}\left|\hspace{-2em}%
\begin{bmatrix}
1 & -1 & . & 1 & -1 \\\hline\tabstrut
. & x & 0 & 0 & 0 \\
. & . & y & -1 & . \\
. & . & -1 & x & -x \\
. & . & . & . & 1
\end{bmatrix}
s =
\begin{bmatrix}
. \\\hline\tabstrut 0 \\ . \\ . \\ 1
\end{bmatrix}.
\right.
\end{displaymath}
Now we can remove row~1 and column~1:
\begin{equation}\label{eqn:mr.extals.1}
\hspace{1.68em}\left|\hspace{-2em}%
\begin{bmatrix}
1 & . & 1 & -1 \\\hline\tabstrut
. & y & -1 & . \\
. & -1 & x & -x \\
. & . & . & 1
\end{bmatrix}
s =
\begin{bmatrix}
. \\\hline\tabstrut . \\ . \\ 1
\end{bmatrix}.
\right.
\end{equation}
Before we do the last (right) minimization step,
we transform the extended ALS back into a ``normal''
by exchanging columns~1 and~2, scaling the (new)
column~1 by $-1$ and subtract it from column~3:
\begin{displaymath}
\hspace{1.68em}\left|\hspace{-2em}%
\begin{bmatrix}
1 & -1 & . & 0 \\\hline\tabstrut
. & 1 & y & -1 \\
. & -x &-1 & 0 \\
. & . & . & 1
\end{bmatrix}
s =
\begin{bmatrix}
. \\\hline\tabstrut . \\ . \\ 1
\end{bmatrix}.
\right.
\end{displaymath}
(If necessary right minimization steps can be performed
until one reaches a pivot block with corresponding
non-zero entry in row~0.)
Now we can remove row~0 and column~0 again.
The last step to a minimal ALS for
$fg = (1-yx)\inv$ is trivial:
\begin{displaymath}
\begin{bmatrix}
1 & y & 0 \\
-x &-1 & 0 \\
. & . & 1
\end{bmatrix}
s =
\begin{bmatrix}
1 \\ . \\ 1
\end{bmatrix}.
\end{displaymath}
After removing the last row and column
we exchange the two rows to get a
\emph{standard} ALS.
\end{Example}

Now at least one question should have appeared:
\emph{How} can one prove ---for a given block index $k$---
the $\field{K}$-linear independence of the left
(respectively right) subfamily
$s_{\block{k:m}}$ (respectively $t_{\block{1:k}}$)
in general, assuming that
$s_{\block{k+1:m}}$ (respectively $t_{\block{1:k-1}}$)
is $\field{K}$-linearly independent?
For an answer some preparation is necessary.

\begin{lemma}[\protect{%
\cite[Lemma~1.2]{Cohn1999a}% MR1723470 0218-1967
}]\label{lem:mr.cohn99.12}
Let $f \in \field{F}$ given by the linear representation
$\pi_f = (u,A,v)$ of dimension $n$.
Then $f=0$ if and only if there exist invertible matrices
$P,Q \in \field{K}^{n \times n}$ such that
\begin{displaymath}
P \pi_f Q= \left(
\begin{bmatrix}
\tilde{u}_{\block{1}} & 0
\end{bmatrix},
\begin{bmatrix}
\tilde{A}_{1,1} & 0 \\
\tilde{A}_{2,1} & \tilde{A}_{2,2}
\end{bmatrix},
\begin{bmatrix}
0 \\ \tilde{v}_{\block{2}}
\end{bmatrix}
\right)
\end{displaymath}
for square matrices $\tilde{A}_{1,1}$ and $\tilde{A}_{2,2}$.
\end{lemma}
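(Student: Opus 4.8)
The plan is to treat the two implications separately; the reverse implication (``if'') is a short computation, and the forward one (``only if'') is where Cohn's minimality machinery enters.

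For the reverse implication, note first that an invertible $\field{K}$-transformation never changes the represented element, since $(uQ)(PAQ)\inv(Pv)=uA\inv v$. Hence it suffices to verify $f=0$ for a representation $(u,A,v)$ already in the displayed shape. With $A=\bigl[\begin{smallmatrix}\tilde A_{1,1}&0\\ \tilde A_{2,1}&\tilde A_{2,2}\end{smallmatrix}\bigr]$ invertible over $\field{F}$ (so $\tilde A_{1,1}$ and $\tilde A_{2,2}$ are invertible), block inversion gives $A\inv v=\bigl[\begin{smallmatrix}0\\ \tilde A_{2,2}\inv\tilde v_{\block{2}}\end{smallmatrix}\bigr]$, and therefore $uA\inv v=[\tilde u_{\block{1}}\ \ 0]\,\bigl[\begin{smallmatrix}0\\ \tilde A_{2,2}\inv\tilde v_{\block{2}}\end{smallmatrix}\bigr]=0$. (Equivalently, the displayed pattern is exactly what makes the $(1,1)$-Schur complement $-\,uA\inv v$ of the bordered matrix $\bigl[\begin{smallmatrix}A&v\\ u&0\end{smallmatrix}\bigr]$ vanish.)

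For the forward implication, assume $f=uA\inv v=0$. The empty representation $\pi'=(,,)$ is the dimension-$0$, hence minimal, linear representation of $0\in\field{F}$ by Definition~\ref{def:mr.rep}, so $\pi'$ and $\pi_f$ are equivalent pure linear representations of which the first is minimal. I would apply Theorem~\ref{thr:mr.cohn99.14} to this pair. Since the middle (``$A'$'') block there has dimension $\dim\pi'=0$, its conclusion collapses to: $\pi_f$ is isomorphic over $\field{K}$ to a two-block representation
\[
\pi=(u,A,v),\qquad
u=\begin{bmatrix}0 & u_{\block{3}}\end{bmatrix},\quad
A=\begin{bmatrix}A_{1,1} & A_{1,3}\\ 0 & A_{3,3}\end{bmatrix},\quad
v=\begin{bmatrix}v_{\block{1}}\\ 0\end{bmatrix}
\]
with $A_{1,1}$ and $A_{3,3}$ square. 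This is the assertion up to the ordering of the two blocks: conjugating $\pi$ by the order-reversing permutation matrix $\perm_n$ (i.e.\ replacing $\pi$ by $\perm_n\pi\,\perm_n$) turns $A$ into a block-lower-triangular matrix, moves the zero block of $u$ from the first slot to the second and the zero block of $v$ from the last slot to the first, yielding exactly the displayed form with $\tilde A_{1,1}=\perm A_{3,3}\perm$, $\tilde A_{2,2}=\perm A_{1,1}\perm$ and $\tilde A_{2,1}=\perm A_{1,3}\perm$. The required $P,Q\in\field{K}^{n\times n}$ are the composition of the isomorphism supplied by Theorem~\ref{thr:mr.cohn99.14} with $\perm_n$ on both sides.

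The only place real work hides is in Theorem~\ref{thr:mr.cohn99.14} itself; if one wanted a proof independent of it, I would instead observe that $f=0$ is equivalent to $\bigl[\begin{smallmatrix}A&v\\ u&0\end{smallmatrix}\bigr]$ being singular over $\field{F}$ (its $(1,1)$-Schur complement is $-f$), hence---this being a square linear matrix over $\freeALG{\field{K}}{X}$---to its being non-full, and then invoke Lemma~\ref{lem:mr.cohn95.636} to make it associated over $\field{K}$ to a linear hollow matrix, i.e.\ one with a $k\times l$ zero block, $k+l\ge n+2$. The delicate point of that route is purely combinatorial: one must choose the $\field{K}$-permutations so that the distinguished border row (carrying $u$) and border column (carrying $v$) fall into the zero part and so that the surrounding transformations restrict to the inner blocks; arranging this forces precisely the $u$-zero, $v$-zero, block-triangular-$A$ pattern of the statement. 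The first route absorbs exactly this bookkeeping into a cited result, so that is the one I would write up.
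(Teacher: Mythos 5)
The paper never proves this lemma at all—it is imported verbatim from Cohn and Reutenauer \cite[Lemma~1.2]{Cohn1999a}—so there is no in-paper proof to compare against; judged on its own terms, your argument is correct. The ``if'' direction is fine: since $A$ is full, its block lower triangular form forces $\tilde{A}_{1,1}$ and $\tilde{A}_{2,2}$ to be invertible over $\field{F}$, so $A\inv v$ has vanishing first block and $uA\inv v=0$, and a $\field{K}$-transformation $(P,Q)$ never changes the represented element. The ``only if'' direction---applying Theorem~\ref{thr:mr.cohn99.14} to the pair consisting of the empty (minimal) representation of $0$ and $\pi_f$, then conjugating by $\perm_n$ to flip the block triangularity and the positions of the zero blocks in $u$ and $v$---is legitimate within this paper, where Theorem~\ref{thr:mr.cohn99.14} is available as a quoted result; note only that the decomposition you obtain may be degenerate (one of the two diagonal blocks empty, e.g.\ when $u=0$ or $v=0$ and in particular for $n=1$), so the lemma must be read as allowing empty blocks, which your construction respects. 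One caveat worth recording: in the original source Lemma~1.2 precedes Theorem~1.4 and belongs to the machinery from which such statements are derived, so your derivation reverses the logical order of Cohn and Reutenauer's development---not circular in the context of this paper, which cites both results independently, but anachronistic as a reconstruction of the original proof. Your sketched alternative, namely that $f=0$ makes the bordered matrix $\bigl[\begin{smallmatrix} A & v\\ u & 0\end{smallmatrix}\bigr]$ non-full and then invoking Lemma~\ref{lem:mr.cohn95.636} on hollow matrices (with the bookkeeping you describe to keep the border row and column inside the zero block), is in fact the route much closer to the source's own argument, and would also be the self-contained one.
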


\begin{theorem}[Left Block Minimization]\label{thr:mr.lmin}
Let $\als{A}=(u,A,v) = (1,A,\lambda)$ be an ALS of dimension~$n$
with $m = \pivot(\als{A}) \ge 2$ pivot blocks of size $n_i = \dim_i(\als{A})$.
Let $k \in \{ 1, 2, \ldots, m-1 \}$ such that
the left subfamily
$s_{\block{k+1:m}}$ with respect to the block decomposition
$\als{A}^{[\block{k}]}$ 
is $\field{K}$-linearly independent while
$s_{\block{k:m}}$ is $\field{K}$-linearly dependent.
Then there exists a block row transformation
$(P,Q) = \bigl( P(\bar{T}, T), Q(\bar{U}, U) \bigr)_{\block{k}}$,
such that $\tilde{\als{A}} = P \als{A} Q$ has the form
\begin{equation}\label{eqn:mr.lmin}
\begin{bmatrix}
A_{1:,1:} & \tilde{A}_{1:,k'} & \tilde{A}_{1:,k''} & \tilde{A}_{1:,:m} \\
. & \tilde{A}_{k',k'} & 0 & 0 \\
. & \tilde{A}_{k'',k'} & \tilde{A}_{k'',k''} & \tilde{A}_{k'',:m} \\
. & . & . & A_{:m,:m}
\end{bmatrix}
\begin{bmatrix}
\tilde{s}_{\block{1:}} \\ 0 \\ \tilde{s}_{\block{k}''} \\ s_{\block{:m}}
\end{bmatrix}
=
\begin{bmatrix}
. \\ . \\ . \\ v_{\block{:m}}
\end{bmatrix}
\end{equation}
If the pivot block $A_{k,k}$ is \emph{refined}, then
there exists a particular block row transformation
$(P,Q) = \bigl(P(T), Q(U)\bigr)\mthstrut_{\block{k}}$,
such that the left block minimization equations
\begin{displaymath}
A_{k,k} U + A_{k,:m} + T A_{:m,:m} = 0^{n_k \times n_{k+1:m}}
\quad\text{and}\quad
v_{\block{k}} + T v_{\block{:m}} = 0^{n_k \times 1}
\end{displaymath}
are fulfilled.
\end{theorem}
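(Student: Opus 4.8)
The plan is to exploit the block structure together with Cohn's zero-characterisation (Lemma~\ref{lem:mr.cohn99.12}). First I would fix notation: the hypothesis says the left subfamily $s_{\block{k:m}}$ is $\field{K}$-linearly dependent while $s_{\block{k+1:m}}$ is independent, so there is a nontrivial $\field{K}$-linear combination of the components of $s_{\block{k}}$ that lies in $\linsp(s_{\block{k+1:m}})$; equivalently, there are row vectors $c \in \field{K}^{1\times n_k}$, $c\ne 0$, and $d \in \field{K}^{1\times n_{k+1:m}}$ with $c\, s_{\block{k}} + d\, s_{\block{:m}} = 0$ in $\field{F}$. (Here I use that the components $s_{\block{1:}}$ do not enter because the block-decomposed system $\als{A}^{[\block{k}]}$ is block upper triangular: the first block row reads $A_{1:,1:}s_{\block{1:}} + \ldots = 0$, so $s_{\block{1:}}$ is determined by the lower blocks and plays no role in linear relations among the remaining components — actually I should be careful here, the cleanest route is just to work with the sub-ALS obtained by deleting block row/column $1{:}$, i.e.\ with $(A_{k,k}, A_{k,:m}; 0, A_{:m,:m})$ acting on $(s_{\block{k}}, s_{\block{:m}})$ with right-hand side $(v_{\block{k}}, v_{\block{:m}})$.)

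Next I would package this relation as a linear representation of the zero element. Consider the element $0 = c\, s_{\block{k}} + d\, s_{\block{:m}}$. Using $A s = v$ restricted to the lower-right corner, one sees $s_{\block{:m}} = A_{:m,:m}\inv v_{\block{:m}}$ and $s_{\block{k}} = A_{k,k}\inv(v_{\block{k}} - A_{k,:m}s_{\block{:m}})$. So the scalar $0$ has a linear representation of dimension $n_k + n_{k+1:m}$ built from the corner block $\left[\begin{smallmatrix} A_{k,k} & A_{k,:m}\\ . & A_{:m,:m}\end{smallmatrix}\right]$, with left vector $[c \;\; d]$ and right vector $[v_{\block{k}};\; v_{\block{:m}}]$. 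Since $s_{\block{k+1:m}}$ is $\field{K}$-linearly independent, the sub-representation $(d, A_{:m,:m}, v_{\block{:m}})$ of the element $d\, s_{\block{:m}} = -c\, s_{\block{k}}$ is "as small as possible on that part"; I would invoke Theorem~\ref{thr:mr.cohn99.14} (or directly Lemma~\ref{lem:mr.cohn99.12} after a permutation) to obtain invertible $\field{K}$-matrices $\bar T, \bar U$ and rectangular $T, U$ acting only on block $k$ (hence giving an admissible block row transformation of the required shape~\eqref{eqn:mr.blorowtrn}) that bring the transformed system into the form~\eqref{eqn:mr.lmin}: a split $k \to (k', k'')$ of the $k$th pivot block where the $k'$-part of the new left family is $0$ and decouples (zero blocks to its right in the system matrix), while the $k''$-part together with $s_{\block{:m}}$ remains independent. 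Concretely, $\bar U$ is chosen so that the first $n_{k'}$ coordinates of the new $s_{\block{k}}$ vanish and $\bar T$ clears the corresponding block row; the rectangular parts $T,U$ absorb the interaction with the $s_{\block{:m}}$ block, which is exactly what the zero relation $c\,s_{\block{k}} + d\,s_{\block{:m}} = 0$ provides.

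For the refined case: if $A_{k,k}$ is refined then by Definition~\ref{def:mr.redpivot} no admissible pivot-block transformation can create a lower-left zero block inside $A_{k,k}$, which forces $k' = k$, i.e.\ the \emph{entire} new $s_{\block{k}}$ is zero: $\tilde s_{\block{k}} = 0$. Reading off the transformed system with $\bar T, \bar U$ and then normalising — replacing $T$ by $\bar T\inv T$ and $U$ by $\bar U\inv U$ as in the derivation of~\eqref{eqn:mr.lmsys2} preceding Definition~\ref{def:mr.meqn} — the condition $(\tilde A)_{k,:m} = 0$ and $(\tilde v)_{\block{k}} = 0$ becomes exactly $A_{k,k}U + A_{k,:m} + T A_{:m,:m} = 0$ and $v_{\block{k}} + T v_{\block{:m}} = 0$, i.e.\ the stated left block minimization equations hold for the \emph{particular} transformation $(P(T), Q(U))_{\block{k}}$. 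I would close by remarking that these equations are $\field{K}$-linear in the unknown entries of $T$ and $U$, so solvability is decidable by linear algebra. The main obstacle I anticipate is the bookkeeping around the top block $1{:}$ and justifying that the first block column/row may be ignored when extracting the zero relation and when claiming the transformation can be taken block-row (not full), plus carefully matching Cohn's $(P,Q)$ from Lemma~\ref{lem:mr.cohn99.12} (which a priori mixes everything) to the restricted shape~\eqref{eqn:mr.blorowtrn}; this requires checking that the independence of $s_{\block{k+1:m}}$ pins down the lower-right corner rigidly enough that the only freedom Cohn's lemma exercises is within block $k$ and in the $T,U$-interaction with $s_{\block{:m}}$.
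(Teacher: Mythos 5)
Your strategy coincides with the paper's: build a representation of $0$ from the corner $\bigl[\begin{smallmatrix} A_{k,k} & A_{k,:m}\\ . & A_{:m,:m}\end{smallmatrix}\bigr]$, apply Lemma~\ref{lem:mr.cohn99.12}, use refinedness to force elimination of the whole block $k$, and read off the linear equations. However, the step you defer at the end as ``the main obstacle'' is exactly the substance of the paper's proof of the first claim, so as written there is a genuine gap: Lemma~\ref{lem:mr.cohn99.12} only provides \emph{some} invertible pair $(P',Q')$ on the corner, with no control over how it mixes block $k$ with blocks $k+1{:}m$, while \eqref{eqn:mr.lmin} requires a transformation of the restricted shape \eqref{eqn:mr.blorowtrn} (identity on blocks $1{:}k-1$ and $k+1{:}m$, and preserving the zero blocks $(:m,k)$). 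The paper closes this concretely: first an invertible pair $(\bar U^\circ,U^\circ)$ acting only on block $k$ (your relation $c\,s_{\block{k}}+d\,s_{\block{:m}}=0$) makes the first component of $s_{\block{k}}$ vanish; then, for the $(P',Q')$ from Lemma~\ref{lem:mr.cohn99.12}, one may take $U_{:m,:m}=I$, observe that the relevant zero block of $P'A'$ does not involve $T_{:m,k},T_{:m,:m}$ (here the $\field{K}$-linear independence of $s_{\block{:m}}$ enters), so these can be chosen $0$ and $I$, and the lower-left block $A_{:m,:m}U_{:m,k}$ of $P'A'Q'$ has columns that are $\field{K}$-linear combinations of columns of $A_{:m,:m}$, so $U_{:m,k}=0$ may be assumed; composing, $\bar U=\bar U^\circ\bar U'$ and $U=\bar U^\circ U'+U^\circ$ yield the block row transformation. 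Without carrying out some such normalization the first statement is unproved, and your second part inherits the gap because it is obtained by normalizing the part-one transformation.

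Two smaller corrections. In the refined case, \eqref{eqn:mr.lmin} gives an \emph{upper-right} zero block of size $n_{k'}\times n_{k''}$ inside $\bar T A_{k,k}\bar U$, whereas Definition~\ref{def:mr.redpivot} speaks of lower-left zero blocks; one must also conjugate by the order-reversing permutation $\perm$ to get the contradiction forcing $n_{k''}=0$, which is how the paper argues. And in the normalization only $T$ is replaced by $\bar T\inv T$: the conditions $A_{k,k}U+A_{k,:m}+TA_{:m,:m}=0$ and $v_{\block{k}}+Tv_{\block{:m}}=0$ do not involve $\bar U$ at all (compare the derivation of \eqref{eqn:mr.lmsys2}), so your substitution of $\bar U\inv U$ for $U$ is both unnecessary and unjustified. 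Note finally that the paper proves the second part by a slightly different, shape-independent route: from $n_{k''}=0$ it takes $U$ with $s_{\block{k}}=U s_{\block{:m}}$, uses $v_{\block{k}}=0$, and applies Lemma~\ref{lem:mr.rt1} to $(1,A_{:m,:m},\lambda)$ with $B=-A_{k,k}U-A_{k,:m}$ to obtain $T$, which then also kills $v_{\block{:m}}$; this avoids relying on the explicit form of the transformation from the first part.
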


\begin{proof}
We refer to the block decomposition
\begin{displaymath}
\als{A}^{[\block{k}]} = \left(
\begin{bmatrix}
u_{\block{1:}} & . & . 
\end{bmatrix},
\begin{bmatrix}
A_{1:,1:} & A_{1:,k} & A_{1:,:m} \\
.       & A_{k,k} & A_{k,:m} \\
.       & . & A_{:m,:m}
\end{bmatrix},
\begin{bmatrix}
. \\ . \\ v_{\block{:m}}
\end{bmatrix}
\right).
\end{displaymath}
Due to the $\field{K}$-linear independence of the
left subfamily $s_{\block{k+1:m}}$ there exists an
invertible matrix $\tilde{Q}$ with blocks
$\bar{U}^\circ \in \field{K}^{n_k \times n_k }$
and $U^\circ \in \field{K}^{n_k \times n_{k+1:m}}$,
such that $(\tilde{Q}\inv s)_{n_{1:k-1}+1} = 0$,
that is, the first component in $s_{\block{k}}$ can be eliminated.
Let
\begin{displaymath}
A' = 
\begin{bmatrix}
A_{k,k} & A_{k,:m} \\
. & A_{:m,:m}
\end{bmatrix}
\begin{bmatrix}
\bar{U}^\circ & U^\circ \\
. & I_{k+1:m}
\end{bmatrix}
\quad\text{and}\quad
v' =
\begin{bmatrix}
. \\ v_{\block{:m}}
\end{bmatrix}.
\end{displaymath}
Then $\als{A}'=(u',A',v')$ is an ALS for $0 \in \field{F}$
and we can apply Lemma~\ref{lem:mr.cohn99.12} to get a transformation
\begin{displaymath}
(P',Q') = \left(
\begin{bmatrix}
\bar{T} & T \\
T_{:m,k} & T_{:m,:m}
\end{bmatrix},
\begin{bmatrix}
\bar{U}' & U' \\
U_{:m,k} & U_{:m,:m}
\end{bmatrix}
\right)
\end{displaymath}
such that $P'A' Q'$ has the respective upper right block of zeros
and ---without loss of generality--- $P'v' = v'$.
Clearly we can choose $U_{:m,:m} = I_{k+1:m}$.
And since $s_{\block{:m}}$ is $\field{K}$-linearly independent,
the block of zeros in
\begin{align*}
P' A' &=
\begin{bmatrix}
\bar{T} & T \\
T_{:m,k} & T_{:m,:m}
\end{bmatrix}
\begin{bmatrix}
A'_{k,k} & A'_{k,:m} \\
. & A_{:m,:m}
\end{bmatrix} \\
&=
\begin{bmatrix}
\bar{T} A'_{k,k} & \bar{T} A'_{k,:m} + T A_{:m,:m} \\
T_{:m,k} A'_{k,k} & T_{:m,k} A'_{k,:m} + T_{:m,:m} A_{:m,:m}
\end{bmatrix}
\end{align*}
is \emph{independent} of $T_{:m,k}$ and $T_{:m,:m}$,
thus we can choose $T_{:m,k}=0$ and $T_{:m,:m}=I_{k+1:m}$.
Now it is obvious that the columns in the lower left block of
\begin{align*}
\!\!\!\!\!\! P'A'Q' &=
\begin{bmatrix}
\bar{T} A'_{k,k} & \bar{T} A'_{k,:m} + T A_{:m,:m} \\
. &  A_{:m,:m}
\end{bmatrix}
\begin{bmatrix}
\bar{U}' & U' \\
U_{:m,k} & I
\end{bmatrix} \\
&=
\begin{bmatrix}
\bar{T} A'_{k,k} \bar{U}' + (\bar{T} A'_{k,:m} + T A_{:m,:m}) U_{:m,k}
  & \bar{T} A'_{k,k} U' + \bar{T} A'_{k,:m} + T A_{:m,:m} \\
A_{:m,:m} U_{:m,k} & A_{:m,:m}
\end{bmatrix}\!
\end{align*}
are linear combinations of the columns of $A_{:m,:m}$
and therefore we can assume that $U_{:m,k}=0$.
Now let $\bar{U}= \bar{U}^\circ \bar{U}'$ and $U = \bar{U}^\circ U' + U^\circ$.
Then $P\als{A}Q$ has 
the desired form~\eqref{eqn:mr.lmin}
for the (for $k>1$ admissible) block row transformation
\begin{displaymath}
(P,Q) = \left(
\begin{bmatrix}
I_{1:k-1} & . & . \\
. & \bar{T} & T \\
. & . & I_{k+1:m}
\end{bmatrix},
\begin{bmatrix}
I_{1:k-1} & . & . \\
. & \bar{U} & U \\
. & . & I_{k+1:m}
\end{bmatrix}
\right).
\end{displaymath}
For the second part we first have to show that \emph{each}
component in $s_{\block{k}}$ can be eliminated by a linear
combination of components of $s_{\block{k+1:m}}$, that is, $n_{k''}=0$.
We assume to the contrary that $n_{k''} > 0$.
But then ---by~\eqref{eqn:mr.lmin}---
$\bar{T} A_{k,k} \bar{U}$ would have
an upper right block of zeros of size $(n_k - n_{k''}) \times n_{k''}$
and therefore (after an appropriate permutation) a lower left,
contradicting the assumption on a refined pivot block.
Hence there exists a matrix
$U \in \field{K}^{n_k \times n_{k+1:m}}$ such that
$s_{\block{k}} - U [s_{\block{k+1}}, \ldots, s_{\block{m}}] = 0$.
By assumption $v_{\block{k}} = 0$.
Now we can apply ---as in Lemma~\ref{lem:mr.min1}---
Lemma~\ref{lem:mr.rt1} with the ALS
$(1,A_{:m,:m},\lambda)$ and $B = -A_{k,k} U - A_{k,:m}$
(and $s_{\block{:m}}$).
Thus there exists a matrix $T \in \field{K}^{n_k \times n_{k+1:m}}$
fulfilling $A_{k,k} U + A_{k,:m} + T A_{:m,:m} = 0$.
Since the last column of $T$ is zero, we have also $T v_{\block{:m}} = 0$.
With $\bar{T} = \bar{U} = I_{1:n_k}$ the transformation $(P,Q)$
is the appropriate particular block row transformation.
\end{proof}

\begin{remark}
For the proof of the second part of the theorem
one can use alternatively Lemma~\ref{lem:mr.cohn99.12}
which is more powerful but with respect to the use of
linear techniques not that obvious.
\end{remark}

\ifJOURNAL
\medskip
\fi

\begin{remark}
Notice that the left subfamily $(\tilde{s}_{\block{k}''}, s_{\block{:m}})$
is not necessarily $\field{K}$-linearly independent.
If necessary, one can apply the theorem again
after removing block row and column~$k'$.
\end{remark}

\ifJOURNAL
\medskip
\fi

\begin{remark}
For $k=1$, if necessary, one must use an extended ALS,
see Remark~\ref{rem:mr.extals}.
\end{remark}

\begin{Remark}\label{rem:mr.linunabh}
Assuming $\field{K}$-linear independence of the
left subfamily $s_{\block{k+1:m}}$ and a refined
pivot block $A_{k,k}$, the second part of the previous
theorem means nothing less than the possibility
to check $\field{K}$-linear (in-)dependence
of the left subfamily $s_{\block{k:m}}$ by
\emph{linear} techniques!
\end{Remark}

\begin{theorem}[Right Block Minimization]\label{thr:mr.rmin}
Let $\als{A}=(u,A,v) = (1,A,\lambda)$ be an ALS of dimension~$n$
with $m = \pivot(\als{A}) \ge 2$ pivot blocks of size $n_i = \dim_i(\als{A})$.
Let $k \in \{ 2, 3, \ldots, m \}$ such that
the right subfamily $t_{\block{1:k-1}}$
with respect to the block decomposition $\als{A}^{[\block{k}]}$
is $\field{K}$-linearly independent while
$t_{\block{1:k}}$ is $\field{K}$-linearly dependent.
Then there exists a block column transformation
$(P,Q) = \bigl( P(\bar{T}, T), Q(\bar{U}, U)\bigr)\mthstrut^{\block{k}}$,
such that $\tilde{\als{A}} = P \als{A} Q$ has the form
\begin{equation}\label{eqn:mr.rmin}
\begin{bmatrix}
u_{\block{1}} & . & . & .
\end{bmatrix}
=
\begin{bmatrix}
t_{\block{1}} & \tilde{t}_{\block{2'}} & 0 & \tilde{t}_{\block{3}}
\end{bmatrix}
\begin{bmatrix}
A_{1,1} & \tilde{A}_{1,2'} & 0 & \tilde{A}_{1,3} \\
. & \tilde{A}_{2',2'} & 0 & \tilde{A}_{2',3} \\
. & \tilde{A}_{2'',2'} & \tilde{A}_{2'',2''} & \tilde{A}_{2'',3} \\
. & . & . & A_{3,3}
\end{bmatrix}.
\end{equation}
If the pivot block $A_{k,k}$ is refined, then there exists a particular
block column transformation $(P,Q) = \bigl( P(T), Q(U)\bigr)\mthstrut^{\block{k}}$,
such that the right block minimization equations
\begin{displaymath}
A_{1:,1:} U + A_{1:,k} + T A_{k,k} = 0^{n_{1:k-1} \times n_k}
\end{displaymath}
are fulfilled.
\end{theorem}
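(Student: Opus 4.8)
The plan is to mirror the proof of the Left Block Minimization theorem (Theorem~\ref{thr:mr.lmin}), exploiting the ``transpose'' symmetry between left and right families. First I would pass to the block decomposition $\als{A}^{[\block{k}]}$ and observe that the right subfamily $t = uA\inv$ satisfies $u = tA$; the $\field{K}$-linear dependence of $t_{\block{1:k}}$ together with the independence of $t_{\block{1:k-1}}$ means there is an invertible matrix $\tilde P$ with blocks acting on the pivot-block-$k$ row that eliminates the \emph{last} component of $t_{\block{k}}$ while fixing the rest. Setting $A' $ to be the corresponding transformed pair restricted to blocks $1$ through $k$ and $u' = [u_{\block{1}},.]$, the triple $(u',A',.)$ (with zero $v$-part on the relevant blocks) is a linear representation of $0 \in \field{F}$, so Lemma~\ref{lem:mr.cohn99.12} applies and yields the block of zeros shown in~\eqref{eqn:mr.rmin}. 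As in the left case, the independence of $t_{\block{1:k-1}}$ lets me normalize the ``outer'' blocks of the transformation matrices (choosing identity blocks and zero off-diagonal pieces), absorbing everything into a block column transformation $\bigl(P(\bar T,T),Q(\bar U,U)\bigr)^{\block{k}}$.

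For the second part I would argue exactly parallel to Theorem~\ref{thr:mr.lmin}: suppose the ``$2''$'' block in~\eqref{eqn:mr.rmin} is nonempty, i.e.\ not every component of $t_{\block{k}}$ can be written as a linear combination of components of $t_{\block{1:k-1}}$. Then $\bar T A_{k,k}\bar U$ acquires an upper-left (hence, after permutation, lower-left) block of zeros, contradicting that $A_{k,k}$ is refined. Therefore the whole of $t_{\block{k}}$ lies in the span of $t_{\block{1:k-1}}$, so there is $U'\in\field{K}^{n_{1:k-1}\times n_k}$ with $t_{\block{k}} = [t_{\block{1}},\ldots,t_{\block{k-1}}]\,U'$ in the appropriate sense. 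I would then apply the transposed form of Lemma~\ref{lem:mr.rt1} (or Lemma~\ref{lem:mr.rt1} itself applied to the ALS $(1,A_{1:,1:},\lambda)$ with the right family, cf.\ the analogous usage in the proof of Theorem~\ref{thr:mr.lmin}) to the matrix $B = -A_{1:,k} - A_{1:,1:} U'$, which annihilates $t_{\block{1:k-1}}$; this produces $T\in\field{K}^{n_{1:k-1}\times n_k}$ with $A_{1:,1:}U' + A_{1:,k} + TA_{k,k} = 0$, i.e.\ the right block minimization equations~\eqref{eqn:mr.rmsys2} with $\bar T = \bar U = I$, a \emph{particular} block column transformation.

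One subtlety worth flagging: unlike the left case there is no $v$-side equation here (the equations $\mathcal{R}_{\block{k}}$ are purely about the system matrix), so I do \emph{not} need an analogue of the ``last column of $T$ is zero'' step — the right minimization acts on the $u$-side and $u=e_1$ is only touched indirectly, which is already handled by using a transformation that is admissible (first row of $Q$ unchanged). However, for $k = m$ the block $A_{:m,:m}$ degenerates (there is no ``block~3''), and for $k$ small one must check that the column transformation keeps the representation admissible; both points are handled exactly as in the remarks following Theorem~\ref{thr:mr.lmin}, possibly using an extended ALS when appropriate, though for the \emph{right} side the need for extension does not arise symmetrically (the troublesome case $f\inv f$ is a left-minimization phenomenon).

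The main obstacle, as in Theorem~\ref{thr:mr.lmin}, is bookkeeping: correctly tracking how the two-stage transformation (first $\tilde Q$ to kill one component, then the Lemma~\ref{lem:mr.cohn99.12} transformation $(P',Q')$) composes into a single legitimate block column transformation of the prescribed shape~\eqref{eqn:mr.blocoltrn}, and verifying at each stage that the blocks one wants to set to $I$ or $0$ really are free to be chosen that way because the relevant sub-block of zeros is independent of them (this is where the $\field{K}$-linear independence of $t_{\block{1:k-1}}$ does the work, just as independence of $s_{\block{k+1:m}}$ did in the left version). The genuinely non-routine input — the refinement hypothesis forcing $n_{2''}=0$ via Lemma~\ref{lem:mr.cohn95.636}-style hollowness reasoning — is identical in spirit to the left case and presents no new difficulty.
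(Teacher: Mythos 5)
Your overall strategy---dualizing the proof of Theorem~\ref{thr:mr.lmin}---is exactly what the paper intends (it prints no separate proof of Theorem~\ref{thr:mr.rmin}), and the first part of your sketch is essentially right, apart from being vague about which representation of $0$ you feed into Lemma~\ref{lem:mr.cohn99.12}: the $v$-side of that representation must be the coordinate vector selecting the eliminated component of the transformed right family (dual to $u'=e_1$ selecting the eliminated component of $s_{\block{k}}$ in the left proof); a literally ``zero $v$-part'' would make the representation of $0$ vacuous and give Lemma~\ref{lem:mr.cohn99.12} nothing to bite on.

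In the second part, however, you have interchanged the roles of $T$ and $U$, and the step as written fails. In the right block minimization equations the matrix encoding the $\field{K}$-linear dependence is $T$, the block sitting in $P$: with $\bar T=\bar U=I$ one has $(tP\inv)_{\block{k}} = t_{\block{k}} - t_{\block{1:}}T$, so the refinement argument must produce $T\in\field{K}^{n_{1:k-1}\times n_k}$ with $t_{\block{k}} = t_{\block{1:}}T$. Then $B := A_{1:,k}+TA_{k,k}$ satisfies $t_{\block{1:}}B = t_{\block{1:}}A_{1:,k}+t_{\block{k}}A_{k,k} = u_{\block{k}} = 0$ (using $k\ge 2$, so the $k$-th block of $u=e_1$ vanishes), and the right-family (transposed) version of Lemma~\ref{lem:mr.rt1}---which does need to be stated, since the lemma as printed concerns left families only---applied to the ALS $(e_1,A_{1:,1:},\,\cdot\,)$, whose right family $t_{\block{1:k-1}}$ is independent by hypothesis, yields $U$ with $B=-A_{1:,1:}U$, i.e.\ exactly~\eqref{eqn:mr.rmsys2}. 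In your version the dependence is written as $t_{\block{k}}=t_{\block{1:k-1}}U'$ and you claim $B=-A_{1:,k}-A_{1:,1:}U'$ annihilates $t_{\block{1:k-1}}$; it does not: $t_{\block{1:}}B = -t_{\block{1:}}A_{1:,k}-u_{\block{1:}}U' = t_{\block{k}}A_{k,k}-e_1U'$, which is not zero in general, and even if it were, the dual of Lemma~\ref{lem:mr.rt1} would only deliver $B=A_{1:,1:}W$ for a scalar $W$, an equation with no $TA_{k,k}$ term, which is not the right block minimization equation. (Two small further points: the zero block created inside the transformed pivot block in~\eqref{eqn:mr.rmin} is upper \emph{right}, not upper left, though the permutation argument against refinedness works the same way; and this step needs both pieces $2'$ and $2''$ nonempty, which holds because the dependence guarantees $n_{2'}\ge 1$.) With $T$ and $U$ swapped back, your argument goes through as the intended dual of Theorem~\ref{thr:mr.lmin}.
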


If one uses alternating left and right block
minimization steps for the minimization,
that is, applying Theorems~\ref{thr:mr.lmin} and~\ref{thr:mr.rmin},
one has to take care that the $\field{K}$-linear independence
of the respective other subfamily is guaranteed.
This is illustrated in the following example.

\begin{Example}\label{ex:mr.alglu}
Let $\als{A} = (u,A,v) = (1,A,\lambda)$ be an ALS with
$m=5$ pivot blocks.
For $k'=2$ we assume that the left subfamily
$s_{\block{k+1:m}}$ is $\field{K}$-linearly independent
and we assume further that there exists a particular
block row transformation $(P,Q)$,
such that the left block minimization equations are
fulfilled, that is, $P\als{A}Q$ has the form
\begin{displaymath}
\begin{bmatrix}
A_{1,1} & A_{1,2} & \tilde{A}_{1,3} & \tilde{A}_{1,4} & \tilde{A}_{1,5} \\
. & A_{2,2} & 0 & 0 & 0 \\
. & . & A_{3,3} & A_{3,4} & A_{3,5} \\
. & . & . & A_{4,4} & A_{4,5} \\
. & . & . & . & A_{5,5}
\end{bmatrix}
\begin{bmatrix}
\tilde{s}_{\block{1}} \\ 0 \\ s_{\block{3}} \\ s_{\block{4}} \\ s_{\block{5}} 
\end{bmatrix}
=
\begin{bmatrix}
. \\ 0 \\ . \\ . \\ v_{\block{5}}
\end{bmatrix}.
\end{displaymath}
If the right subfamily $t_{\block{1:3}}$ is $\field{K}$-linearly
independent, this is \emph{not} necessarily the case for
the right subfamily $t'_{\block{1:3}}$ of the
smaller ALS
\raisebox{0pt}[0pt][0pt]{$\als{A}' = \bigl( P \als{A} Q\bigr)\mthstrut^{[-\block{k}']}$}.
That is, one has to apply Theorem~\ref{thr:mr.rmin} on
$\als{A}'$ with $k=3$ to check that ``again''.
\end{Example}

\begin{algorithm}[Minimizing a refined ALS]\label{alg:mr.minals2}
\ \\
Input: $\als{A} = (u,A,v) = (1,A,\lambda)$ refined ALS (for an element $f$) \\
\rule{2em}{0pt}with $m = \pivot(\als{A}) \ge 2$ pivot blocks of size
  $n_i = \dim_i(\als{A})$ and\\
\rule{2em}{0pt}$\field{K}$-linearly independent subfamilies
  $s_{\block{m}}$ and $t_{\block{1}}$.\\
Output: $\als{A}' = (,,)$, if $f=0$, or\\
\rule{2em}{0pt}a \emph{minimal} refined
  ALS $\als{A}' = (u',A',v')=(1,A',\lambda')$, if $f \neq 0$.

\begin{algtest}
\hbox{}\\[-3.0ex]
\lnum{1:}\>$k := 2$ \\
\lnum{2:}\>while $k \le \pivot(\als{A})$ do \\
\lnum{3:}\>\>$m := \pivot(\als{A})$ \\
\lnum{4:}\>\>$k' := m +1 - k$ \\
\lnum{  }\>\>\textnormal{Is the left subfamily
          \raisebox{0pt}[0pt][0pt]{%
        $(s_{\block{k}'},\overbrace{\mthstrut s_{\block{k'+1}}, \ldots, s_{\block{m}}}^{%
        \text{lin.~independent}})$} $\field{K}$-linearly dependent?}\\
\lnum{5:}\>\>if $\exists\, T,U \in \field{K}^{n_k \times n_{k+1:m}}
    \textnormal{ admissible}: \mathcal{L}_{\block{k}'}(\als{A}) =
      \mathcal{L}_{\block{k}'}(T,U)=0$
    then \\
\lnum{6:}\>\>\>if $k' = 1$ then \\
\lnum{7:}\>\>\>\>return $(,,)$ \\
\lnum{  }\>\>\>endif \\
\lnum{8:}\>\>\>\raisebox{0pt}[0pt][0pt]{%
  $\als{A} := \bigl(P(T) \als{A} Q(U)\bigr)\mthstrut^{[-\block{k}']}$} \\
\lnum{9:}\>\>\>if $k > \max \bigl\{ 2, \frac{m+1}{2} \bigr\}$ then \\
\lnum{10:}\>\>\>\>$k := k-1$ \\
\lnum{   }\>\>\>endif \\
\lnum{11:}\>\>\>continue \\
\lnum{   }\>\>endif \\
\lnum{12:}\>\>if $k' = 1$ and 
  $\exists\, T,U \in \field{K}^{n_k \times n_{k+1:m}}:
    \mathcal{L}_{\block{k}'}(\als{A}^{[+0]})=
    \mathcal{L}_{\block{k}'}(T,U)=0$
  then \\
\lnum{13:}\>\>\>\raisebox{0pt}[0pt][0pt]{%
  $\tilde{\als{A}} := \bigl(P(T) \als{A}^{[+0]} Q(U)\bigr)\mthstrut^{[-\block{k}']}$} \\
\lnum{14:}\>\>\>$\als{A} := \tilde{\als{A}}^{[-0]}$ \\
\lnum{15:}\>\>\>if $k > \max \bigl\{ 2, \frac{m+1}{2} \bigr\}$ then \\
\lnum{16:}\>\>\>\>$k := k-1$ \\
\lnum{   }\>\>\>endif \\
\lnum{17:}\>\>\>continue\\
\lnum{   }\>\>endif \\
\lnum{   }\>\>\textnormal{Is the right subfamily
          \raisebox{0pt}[0pt][0pt]{%
          $(\overbrace{\mthstrut t_{\block{1}}, \ldots, t_{\block{k-1}}}^{%
          \text{lin.~independent}}, t_{\block{k}})$} $\field{K}$-linearly dependent?}\\
\lnum{18:}\>\>if $\exists\, T,U \in \field{K}^{n_{k-1:m} \times n_k}
      \textnormal{ admissible}:
      \mathcal{R}_{\block{k}}(\als{A})= \mathcal{R}_{\block{k}}(T,U)=0$ then \\
\lnum{19:}\>\>\>\raisebox{0pt}[0pt][0pt]{%
  $\als{A} := \bigl(P(T) \als{A} Q(U) \bigr)\mthstrut^{[-\block{k}]}$} \\
\lnum{20:}\>\>\>if $k > \max \bigl\{ 2, \frac{m+1}{2} \bigr\}$ then \\
\lnum{21:}\>\>\>\>$k := k-1$ \\
\lnum{   }\>\>\>endif \\
\lnum{22:}\>\>\>continue \\
\lnum{   }\>\>endif \\
\lnum{23:}\>\>$k := k + 1$ \\
\lnum{   }\>done \\
\lnum{24:}\>return $P\als{A},$ 
        \textnormal{with $P$, such that $Pv = [0,\ldots,0,\lambda']\trp$}
\end{algtest}
\end{algorithm}

\begin{proof}
The admissible linear system $\als{A}$
represents $f=0$ if and only if $s_1 = (A\inv v)_1 = 0$.
Since all systems are equivalent to $\als{A}$,
this case is recognized for $k'=1$ because
by Theorem~\ref{thr:mr.lmin} there is an \emph{admissible}
transformation such that the first left block minimization equation
is fulfilled.
Now assume $f \neq 0$.
We have to show that both the left family $s'$ and
the right family $t'$ of $\als{A}' = (u',A',v')$ are
$\field{K}$-linearly independent respectively.
Let $m' = \pivot(\als{A}')$ and for $k \in \{ 1, 2, \ldots, m' \}$
denote by
\begin{displaymath}
s'_{(k)} = (s'_{\block{m'+1-k}}, s'_{\block{m'+2-k}}, \ldots, s'_{\block{m'}})
\quad\text{and}\quad
t'_{(k)} = (t'_{\block{1}}, t'_{\block{2}}, \ldots, t'_{\block{k}})
\end{displaymath}
the left and the right subfamily respectively.
By assumption $s'_{(1)}$ and $t'_{(1)}$ are $\field{K}$-linearly
independent respectively.
The loop starts with $k=2$.
Only if both $s'_{(k)}$ and $t'_{(k)}$
are $\field{K}$-linearly independent respectively,
$k$ is incremented.
Otherwise a left (Theorem~\ref{thr:mr.lmin}) or a right
(Theorem~\ref{thr:mr.rmin}) minimization step
was successful and the dimension of the current ALS is
strictly smaller than that of the previous.
Hence, since $k$ is bounded from below,
the algorithm stops in a finite number of steps.
(How row~0 and column~0 are
removed from the extended ALS in line~14
is illustrated in Example~\ref{ex:mr.alg}.)
All transformations are such that $\als{A}'$ is a refined
ALS (and therefore in \emph{standard form}).
For $\pivot(\als{A'})=1$ a priori only the left (or the right)
family is $\field{K}$-linearly independent (by assumption).
But if that were not the case for the respective other family,
then the assumption on refined pivot blocks would be contradicted
by Theorem~\ref{thr:mr.rmin} (respectively Theorem~\ref{thr:mr.lmin})
\end{proof}

\begin{remark}
Concerning details with respect to the complexity
of such an algorithm we refer to 
\cite[Remark~33]{Schrempf2017b9}% JSC2018 0747-7171
.
Let $d$ be the number of letters in our alphabet $X$.
For $m = n$ pivot blocks 
and $k < n$ we have $2(k-1)$ unknowns.
By Gaussian elimination one
gets complexity $\complexity(dn^3)$
for solving a linear system for a minimization step,
see \cite[Section~2.3]{Demmel1997a}% MR1463942 book
.
To build such a system and working on a
linear matrix pencil
$\bigl[\begin{smallmatrix} 0 & u \\ v & A \end{smallmatrix}\bigr]$
with $d+1$ square
coefficient matrices of size $n+1$
(transformations, etc.)
has complexity $\complexity(dn^2)$.
So we get overall (minimization) complexity
$\complexity(d n^4)$.
The algorithm of Cardon and Crochemore \cite{Cardon1980a}% MR607437 0399-0540
\ has complexity $\complexity(d n^3)$
but works only for regular elements, that is,
rational formal power series.
For $\dim_i(\als{A}) \approx \sqrt{n}$ we get
complexity $\complexity(d n^5)$ and for the
word problem 
\cite{Schrempf2017a9}% MR3864857 0218-1967
\ with $m=2$ we get complexity
$\complexity(d n^6)$.
\end{remark}

\ifJOURNAL
\medskip
\fi

\begin{remark}
It is clear that one can adapt the algorithm slightly if
the input ALS is constructed by Proposition~\ref{pro:mr.ratop}
out of two minimal admissible linear systems (for the sum and
the product) in standard form.
\end{remark}

\ifJOURNAL
\medskip
\fi

\begin{remark}
The solution of the word problem for two elements
given by \emph{minimal} admissible linear systems is
\emph{independent} of their refinement.
If Algorithm~\ref{alg:mr.minals2} is applied to
an ALS of which it is not known if it is refined,
in some cases it is possible to check if the
ALS $\als{A}'$ is minimal, for example if
$\dim(\als{A}') = \pivot(\als{A}')$.
If the pivot blocks are bigger but the right upper
structure is ``finer'' one can instead ---for $f \neq 0$---
try to minimize the inverse $(\als{A}')\inv$.
In concrete situations there might be other
possibilities to reach minimality.
\end{remark}

\begin{Remark}\label{rem:mr.applications}
Apart from minimization, this algorithm can be used to check if
$f$ is a left factor of an element $fg$ which is relevant for
the minimal factor multiplication 
\cite[Theorem~5.2]{Schrempf2017c9}% IEJA001 1306-6048
. And one can check if two elements are \emph{disjoint}
which is important for the \emph{primary decomposition}
\cite[Theorem~2.3]{Cohn1999a}% MR1723470 0218-1967
.
\end{Remark}

Another aspect of Algorithm~\ref{alg:mr.minals2} (respectively
Theorem~\ref{thr:mr.lmin} and~\ref{thr:mr.rmin}) becomes visible
immediately with Proposition~\ref{pro:mr.cohn94.47} and
Remark~\ref{rem:mr.linunabh}.
The importance of the following theorem becomes clear
if one needs to check $\field{K}$-linear (in-)dependence
of an arbitrary family $(f_1, f_2, \ldots, f_n)$ 
over the free field and it is not possible (anymore)
to take a representation as formal power series
(with coefficients over $\field{K}$).

\begin{theorem}[``Linear'' Characterization of Minimality]
\label{thr:mr.minchar}
A \emph{refined} admissible linear system $\als{A}=(1,A,\lambda)$
for an element in the free field $\field{F}$
with $m=\pivot(\als{A}) \ge 2$ pivot blocks and $\lambda \neq 0$
is minimal if and only if
neither the left block minimization equations
$\mathcal{L}_{\block{k}}(\als{A})$ for $k \in \{ 1, 2, \ldots, m-1 \}$
nor the right block minimization equations
$\mathcal{R}^{\block{k}}(\als{A})$ for $k \in \{ 2,3, \ldots, m \}$
admit a solution.
\end{theorem}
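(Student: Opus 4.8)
The plan is to reduce the statement to the classical characterization of minimality in Proposition~\ref{pro:mr.cohn94.47}: the ALS $\als{A}=(1,A,\lambda)$ is minimal if and only if both the left family $s=A\inv v$ and the right family $t=uA\inv$ are $\field{K}$-linearly independent. Throughout I would use that ``refined'' means every pivot block $A_{kk}$ is refined, that $v_{\block{m}}=\lambda e_{n_m}$ with $\lambda\neq 0$ and $u_{\block{1}}=e_1$, and that each pivot block, being a diagonal block of the full matrix $A$, is itself full and hence invertible over $\field{F}$. Morally this theorem is the ``fixed point'' form of the correctness of Algorithm~\ref{alg:mr.minals2}: if none of the equations $\mathcal{L}_{\block{k}}$, $\mathcal{R}^{\block{k}}$ has a solution, the algorithm returns its input unchanged, and that input is asserted to be minimal.

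For the direction ``minimal $\Rightarrow$ no minimization equation is solvable'' I would argue by contraposition. If $\mathcal{L}_{\block{k}}(T,U)=0$ for some $k\in\{1,\dots,m-1\}$, then for $k\geq 2$ the particular block row transformation $\bigl(P(T),Q(U)\bigr)_{\block{k}}$ is admissible, and the computation preceding Definition~\ref{def:mr.meqn} shows that in $P\als{A}Q$ the block $(k,{:}m)$ of the system matrix vanishes and $(Pv)_{\block{k}}=0$; since $A_{kk}$ is invertible over $\field{F}$ the $k$-th block of the left family is then $0$, so $\als{A}^{[-\block{k}]}$ is an equivalent ALS of dimension $n-n_k<n$. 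For $k=1$ one runs the same argument on the extended system $\als{A}^{[+0]}$ of Remark~\ref{rem:mr.extals}, where the step is admissible, and afterwards removes block $1$ together with the auxiliary row/column $0$. The case of a solvable $\mathcal{R}^{\block{k}}$, $k\in\{2,\dots,m\}$, is symmetric via the particular block column transformation: the block $(1{:},k)$ vanishes, the $k$-th block of the \emph{right} family becomes $0$, and $\als{A}^{[-\block{k}]}$ is again strictly smaller. Either way $\als{A}$ is not minimal.

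For the converse, assume no $\mathcal{L}_{\block{k}}$ ($k\leq m-1$) and no $\mathcal{R}^{\block{k}}$ ($k\geq 2$) is solvable; I would establish the independence of $s$ and of $t$. For $s$: by downward induction on the block index, $s_{\block{k:m}}$ is independent for $k=m,m-1,\dots,1$. The base case $s_{\block{m}}=\lambda A_{mm}\inv e_{n_m}$ is independent, for otherwise $(e_1,A_{mm},\lambda e_{n_m})$ would be a non-minimal representation, and Theorem~\ref{thr:mr.cohn99.14} (or Lemma~\ref{lem:mr.cohn99.12}) would furnish invertible $\field{K}$-matrices conjugating $A_{mm}$ to a matrix with a lower-left zero block of size $i\times(n_m-i)$; since arbitrary invertible matrices are admissible pivot-block transformations for the \emph{last} block (they leave the first row of the transformation untouched), this contradicts the refinement of $A_{mm}$ (Definition~\ref{def:mr.redpivot}). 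For the inductive step, if $s_{\block{k+1:m}}$ is independent but $s_{\block{k:m}}$ is dependent with $k\leq m-1$, then because $A_{kk}$ is refined the second part of Theorem~\ref{thr:mr.lmin} yields a particular block row transformation solving $\mathcal{L}_{\block{k}}(\als{A})$, a contradiction; hence $s_{\block{k:m}}$ is independent, and descending to $k=1$ gives $s$ independent. The argument for $t$ is dual, using the second part of Theorem~\ref{thr:mr.rmin} for the step ``$t_{\block{1:k-1}}$ independent, $t_{\block{1:k}}$ dependent, $k\geq 2$''. With both families independent, Proposition~\ref{pro:mr.cohn94.47} gives minimality.

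The point I expect to need the most care is the base case for the right family, $t_{\block{1}}=e_1A_{11}\inv$ independent. The $s_{\block{m}}$-argument does not transpose verbatim, because an admissible pivot-block transformation of the \emph{first} block must keep the first row of the transformation equal to $e_1$, so refinement of $A_{11}$ alone is too weak to preclude $t_{\block{1}}$ being dependent (e.g.\ when $A_{11}$ is internally triangular). Instead one must show that a dependency in $t_{\block{1}}$ — equivalently a zero component of the right family inside the first block — already forces some $\mathcal{R}^{\block{k}}$, $k\in\{2,\dots,m\}$, to admit a solution, using $v_{\block{1}}=0$ and the genuine refinedness of the interior pivot blocks (for which Lemma~\ref{lem:mr.rt1} and the block computation \eqref{eqn:mr.rmsys2} are the natural tools); this contradicts the hypothesis and closes the induction. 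Everything else is bookkeeping with the transformations recorded around \eqref{eqn:mr.lmsys2}–\eqref{eqn:mr.rmsys2} and with the structure theorems of Cohn quoted in Section~\ref{sec:mr.preliminaries}.
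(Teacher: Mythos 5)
Your architecture is the same as the paper's: the implication ``some $\mathcal{L}_{\block{k}}$ or $\mathcal{R}^{\block{k}}$ solvable $\Rightarrow$ non-minimal'' is the same removal argument, and your two inductions for the converse are precisely the contrapositive of the paper's proof, which picks the minimal block index with independent tail (respectively head) subfamily and invokes the second parts of Theorems~\ref{thr:mr.lmin} and~\ref{thr:mr.rmin} together with Proposition~\ref{pro:mr.cohn94.47}. You are in fact more explicit than the paper about the two base cases, which the paper's phrase ``minimal $k\in\{1,\dots,m-1\}$'' tacitly presupposes and which Algorithm~\ref{alg:mr.minals2} lists as input assumptions ($s_{\block{m}}$ and $t_{\block{1}}$ independent); your treatment of $s_{\block{m}}$ via Lemma~\ref{lem:mr.cohn99.12} and refinedness of the last block is correct, since for the block $k=m\ge 2$ admissibility puts no constraint on $\bar{T},\bar{U}$.

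The genuine gap is exactly the point you flag and then postpone: the base case $t_{\block{1}}=u_{\block{1}}A_{11}\inv$. The patch you announce --- that a dependency inside $t_{\block{1}}$ already forces some $\mathcal{R}^{\block{k}}$, $k\ge 2$, to be solvable --- cannot be carried out; with the literal, admissibility-restricted reading of Definition~\ref{def:mr.redpivot} it is false. Consider
\begin{displaymath}
\als{A} = \left(
\begin{bmatrix} 1 & . & . \end{bmatrix},
\begin{bmatrix} x & z & . \\ y & z & -1 \\ . & . & z \end{bmatrix},
\begin{bmatrix} . \\ . \\ 1 \end{bmatrix}
\right),
\end{displaymath}
an ALS with pivot blocks of sizes $2$ and $1$ for $f=(zy-zx)\inv$ (so $\rank f=2$). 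Its right family is $\bigl((x-y)\inv,\,-(x-y)\inv,\,-(x-y)\inv z\inv\bigr)$, dependent already inside block~$1$, hence the system is not minimal by Proposition~\ref{pro:mr.cohn94.47}. The first pivot block is nevertheless refined in the sense of Definition~\ref{def:mr.redpivot}: an admissible first-block transformation forces $\bar{U}=\bigl[\begin{smallmatrix}1&0\\ \gamma&\delta\end{smallmatrix}\bigr]$, and the $(2,1)$ entry of $\bar{T}A_{11}\bar{U}$ equals $t_{21}x+t_{22}y+\gamma(t_{21}+t_{22})z$, which vanishes only for $t_{21}=t_{22}=0$. Yet neither $\mathcal{L}_{\block{1}}$ nor $\mathcal{R}^{\block{2}}$ admits a solution, because both contain the equation $A_{1,1}U+A_{1,2}+TA_{2,2}=0^{2\times 1}$, whose second component $U_1y+(U_2+T_2)z-1$ carries the constant $-1$ for all $T,U\in\field{K}^{2\times 1}$. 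So a dependency confined to $t_{\block{1}}$ is invisible to every minimization equation, and your induction cannot be closed along the route you sketch. The viable repairs are different: either read ``refined'' for the first pivot block with arbitrary invertible $\bar{T},\bar{U}$ (then your $s_{\block{m}}$ argument transposes verbatim to $t_{\block{1}}$, and the example above simply fails to be refined), or add the independence of $t_{\block{1}}$ (and $s_{\block{m}}$) as a hypothesis, exactly as Algorithm~\ref{alg:mr.minals2} does. As written, your proof of the direction from unsolvability to minimality is incomplete at this one step; note that the paper's own proof is silent on the same point.
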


\begin{proof}
From the existence of a solution non-minimality follows immediately
since in this case ---after the appropriate transformation---
rows and columns can be removed.
And for non-minimality Proposition~\ref{pro:mr.cohn94.47}
implies that either the left or the right family is $\field{K}$-linearly
dependent.
Without loss of generality assume that it is the left
$s = (s_{\block{1}}, s_{\block{2}}, \ldots, s_{\block{m}})$
with minimal $k \in \{ 1, 2, \ldots, m-1 \}$ such that
the left subfamily $(s_{\block{k+1}}, \ldots, s_{\block{m}})$
is $\field{K}$-linearly \emph{in}dependent.
Since the pivot blocks are refined, Theorem~\ref{thr:mr.lmin}
implies the existence of a particular block row transformation
$(P,Q)_{\block{k}}$ and therefore a solution of the $k$-th left
block minimization equations.
\end{proof}

\section{Applications}\label{sec:mr.applications}

Since the focus of this work is mainly minimization
and one dedicated to ``minimal'' rational operations
---collecting all techniques for practical application---
is already available
\cite{Schrempf2018c2}% X180905425 arxiv
, only two applications are illustrated in the following
examples. For other applications see also
Remark~\ref{rem:mr.applications}.

\begin{Example}[Hua's Identity
\cite{Amitsur1966a}% MR0191912 0021-8693
]\label{ex:mr.hua}
We have:
\begin{equation}\label{eqn:mr.hua1}
x - \bigl(x\inv + (y\inv - x)\inv \bigr)\inv = xyx.
\end{equation}
\end{Example}

\begin{proof}
Minimal admissible linear systems for $y\inv$ and $x$ are
\begin{displaymath}
\begin{bmatrix}
y
\end{bmatrix}
s =
\begin{bmatrix}
1
\end{bmatrix}
\quad\text{and}\quad
\begin{bmatrix}
1 & -x \\
. & 1 
\end{bmatrix}
s =
\begin{bmatrix}
. \\ 1
\end{bmatrix}
\end{displaymath}
respectively.
The ALS for the difference $y\inv -x$,
\begin{displaymath}
\begin{bmatrix}
y & -y & . \\
. & 1 & -x \\
. & . & 1 
\end{bmatrix}
s = 
\begin{bmatrix}
1 \\ . \\ -1
\end{bmatrix}, \quad
s =
\begin{bmatrix}
y\inv - x \\
-x \\
-1
\end{bmatrix}, \quad
t = 
\begin{bmatrix}
y\inv & -1 & y\inv - x
\end{bmatrix}
\end{displaymath}
is minimal because the left family $s$ is $\field{K}$-linearly
independent and the right family $t$ is $\field{K}$-linearly
independent (Proposition~\ref{pro:mr.cohn94.47}).
Clearly we have $1\in R(y\inv -x)$.
Thus, by Lemma~\ref{lem:mr.forR},
there exists an admissible transformation
\begin{displaymath}
(P,Q) = \left(
\begin{bmatrix}
. & 1 & . \\
1 & . & 1 \\
. & . & 1
\end{bmatrix},
\begin{bmatrix}
1 & . & . \\
1 & 1 & . \\
. & . & 1
\end{bmatrix}
\right),
\end{displaymath}
that yields the ALS
\begin{displaymath}
\begin{bmatrix}
1 & 1 & -x \\
. & -y & 1 \\
. & . & 1 
\end{bmatrix}
s =
\begin{bmatrix}
. \\ . \\ -1
\end{bmatrix}.
\end{displaymath}
Now we can apply the inverse of type~$(1,1)$:
\begin{displaymath}
\begin{bmatrix}
1 & y \\
-x & -1
\end{bmatrix}
s =
\begin{bmatrix}
. \\ 1
\end{bmatrix},
\quad
s = 
\begin{bmatrix}
(y\inv -x)\inv \\
-(1-xy)\inv
\end{bmatrix}.
\end{displaymath}
This system represents a regular element
$(y\inv - x)\inv = (1-yx)\inv y$,
and therefore can be transformed into a regular ALS
(Definition~\ref{def:mr.reg})
by scaling row~2 by $-1$.
Then we add $x\inv$ ``from the left'':
\begin{displaymath}
\begin{bmatrix}
x & -x & . \\
. & 1 & y \\
. & x & 1
\end{bmatrix}
s = 
\begin{bmatrix}
1 \\ . \\ -1
\end{bmatrix}, \quad
s =
\begin{bmatrix}
x\inv + (y\inv -x)\inv \\
(y\inv -x)\inv \\
-(1-xy)\inv
\end{bmatrix}.
\end{displaymath}
This system is minimal and ---after adding row~3 to row~1
(to eliminate the non-zero entry in the right hand side)---
we apply the (minimal) inverse of type~$(0,0)$:
\begin{equation}\label{eqn:mr.hua2}
\begin{bmatrix}
-1 & -1 & -x & . \\
. & -y & -1 & . \\
. & -1 & 0 & -x \\
. & . & . & 1
\end{bmatrix}
s =
\begin{bmatrix}
. \\ . \\ . \\ 1
\end{bmatrix}.
\end{equation}
Now we multiply row~1 and the columns~2 and~3 by $-1$
and exchange column~2 and~3 to get the following system:
\begin{displaymath}
\begin{bmatrix}
1 & -x & -1 & . \\
. & 1 & y & . \\
. & . & 1 & -x \\
. & . & . & 1
\end{bmatrix}
s =
\begin{bmatrix}
. \\ . \\ . \\ 1
\end{bmatrix}, \quad
s = 
\begin{bmatrix}
x - xyx \\ -yx \\ x \\ 1 
\end{bmatrix}.
\end{displaymath}
The next step would be a scaling by $-1$ and
the addition of $x$ (by Proposition~\ref{pro:mr.ratop}).
With two minimization steps we would reach again minimality.
Alternatively we can add a \emph{linear} term to a polynomial
(in a polynomial ALS)
---depending on the entry $v_n$ in the right hand side---
directly in the upper right entry of the
system matrix:
\begin{displaymath}
\begin{bmatrix}
1 & -x & -1 & x \\
. & 1 & y & . \\
. & . & 1 & -x \\
. & . & . & 1
\end{bmatrix}
s =
\begin{bmatrix}
. \\ . \\ . \\ 1
\end{bmatrix}, \quad
s = 
\begin{bmatrix}
 - xyx \\ -yx \\ x \\ 1 
\end{bmatrix}.
\end{displaymath}
\end{proof}

\begin{remarks}
The transformation of the ALS~\eqref{eqn:mr.hua2}
is a simple case of the \emph{refinement}
of a pivot block and is discussed in detail
in Section~\ref{sec:mr.standardform}.
Hua's identity is also an example in
\cite{Cohn1994a}% MR1276109 0008-414X
. It is worth to compare both approaches.
\end{remarks}

\begin{Example}[Left GCD]\label{ex:mr.lgcd}

Given two polynomials $p,q \in \freeALG{\field{K}}{X} \setminus \field{K}$,
one can compute the \emph{left} (respectively \emph{right})
\emph{greatest common divisor} of $p$ and $q$
by minimizing an admissible linear system for $p\inv q$ (respectively $p q\inv$).
This is now illustrated in the following example.
Let $p = yx(1-yx)z = yxz - yxyxz$ and
$q = y(1-xy)y = y^2 - yxy^2$.
We want to find $h = \lgcd(p,q)$.
An ALS for $p\inv q$ (constructed out of minimal admissible linear systems
for $p\inv$ and $q$ by Proposition~\ref{pro:mr.mul2}) is
\begin{displaymath}
\begin{bmatrix}
z & 1 & . & . & . & . & . & . & . \\
. & x & -1 & . & . & . & . & . & . \\
. & -1 & y & -1 & . & . & . & . & . \\
. & . & . & x & -1 & . & . & . & . \\
. & . & . & . & y & -y & . & . & . \\
. & . & . & . & . & 1 & -x & 1 & . \\
. & . & . & . & . & . & 1 & -y & . \\
. & . & . & . & . & . & . & 1 & y \\
. & . & . & . & . & . & . & . & 1
\end{bmatrix}
s =
\begin{bmatrix}
. \\ . \\ . \\ . \\ . \\ . \\ . \\ . \\ 1
\end{bmatrix}.
\end{displaymath}
Clearly, this system is \emph{refined}.
How to refine an ALS is discussed in Section~\ref{sec:mr.standardform}.
Note, that there is a close 
connection to the factorization of $p$, for details see
\cite{Schrempf2017b9}% JSC2018 0747-7171
.
As a first step we add column~5 to column~6 and
row~6 to row~4,
\begin{displaymath}
\begin{bmatrix}
z & 1 & . & . & . & 0 & . & . & . \\
. & x &-1 & . & . & 0 & . & . & . \\
. &-1 & y &-1 & . & 0 & . & . & . \\
. & . & . & x &-1 & 0 & -x & 1 & . \\
. & . & . & . & y & 0 & 0 & 0 & 0 \\
. & . & . & . & . & 1 & -x & 1 & . \\
. & . & . & . & . & . & 1 & -y & . \\
. & . & . & . & . & . & . & 1 & y \\
. & . & . & . & . & . & . & . & 1
\end{bmatrix}
s =
\begin{bmatrix}
. \\ . \\ . \\ . \\ . \\ . \\ . \\ . \\ 1
\end{bmatrix},
\end{displaymath}
remove rows and columns 5 and 6,
\begin{equation}\label{eqn:mr.lggt2}
\begin{bmatrix}
z & 1 & . & . & . & . & . \\
. & x & -1 & . & . & . & . \\
. & -1 & y & -1 & . & . & . \\
. & . & . & x &  -x & 1 & . \\
. & . & . & . & 1 & -y & . \\
. & . & . & . & . & 1 & y \\
. & . & . & . & . & . & 1
\end{bmatrix}
s =
\begin{bmatrix}
. \\ . \\ . \\ . \\ . \\ . \\ 1
\end{bmatrix}
\end{equation}
and remember the first (left) divisor $h_1 = y$
we have eliminated. (In the next step with a bigger block
one can see immediately how to ``read'' a divisor
directly from the ALS.)
Now there are two ways to proceed:
If it is not possible to create a zero block in ``L''-form
(like before), one can try to change the upper pivot block
structure to create a ``double-L'' zero block.
Here, this is possible by subtracting column~4 from column~2
and adding row~2 to row~4.
(For details on \emph{similarity unique factorization}
in this context
see \cite{Schrempf2017b9}% JSC2018 0747-7171
.)
Afterwards we apply the (admissible) transformation
\begin{displaymath}
(P,Q) = 
\left(
\begin{bmatrix}
1 & . & . & . & . & . & . \\
. & 1 & . & . & . & 1 & . \\
. & . & 1 & . & 1 & . & . \\
. & . & . & 1 & . & . & . \\
. & . & . & . & 1 & . & . \\
. & . & . & . & . & 1 & . \\
. & . & . & . & . & . & 1
\end{bmatrix},
\begin{bmatrix}
1 & . & . & . & . & . & . \\
. & 1 & . & . & . & . & . \\
. & . & 1 & . & . & 1 & . \\
. & . & . & 1 & 1 & . & . \\
. & . & . & . & 1 & . & . \\
. & . & . & . & . & 1 & . \\
. & . & . & . & . & . & 1
\end{bmatrix}
\right)
\end{displaymath}
to get the ALS
\begin{equation}\label{eqn:mr.lggt3}
\begin{bmatrix}
z & 1 & . & . & 0 & 0 & . \\
. & x & -1 & . & 0 & 0 & y \\
. & . & y & -1 & 0 & 0 & 0 \\
. & . & -1 & x & 0 & 0 & 0 \\
. & . & . & . & 1 & -y & . \\
. & . & . & . & . & 1 & y \\
. & . & . & . & . & . & 1
\end{bmatrix}
s =
\begin{bmatrix}
. \\ . \\ . \\ . \\ . \\ . \\ 1
\end{bmatrix}.
\end{equation}
How to get this transformation $(P,Q)$ is described in principle
in Section~\ref{sec:mr.minimizing}.
One can look directly for a ``double-L'' block transformation.
In the third pivot block of \eqref{eqn:mr.lggt3} one can see
immediately that a further (common) left factor is $h_2 = 1-xy$
because the second equation reads $x s_2 - h_2\inv = 0$.
We have eliminated $(1-xy)\inv (1-xy)$.
Recall that a minimal ALS for $h_2$ is
\begin{displaymath}
\begin{bmatrix}
1 & -x & 1 \\
. & 1 & -y \\
. & . & 1
\end{bmatrix}
s =
\begin{bmatrix}
. \\ . \\ -1
\end{bmatrix},
\end{displaymath}
hence $\rank(h_2) = 3$ and (by Theorem~\ref{thr:mr.mininv})
$\rank(h_2\inv) = 2$. Or, more general, for a (left)
factor $h_i$ with $\rank(h_i) = n_i \ge 2$ we can
construct (by Proposition~\ref{pro:mr.mul2})
an ALS of dimension $2(n_i-1)$. 
After removing rows and columns~$\{ 3,4,5,6 \}$
in the ALS~\eqref{eqn:mr.lggt3}, we obtain
for $p\inv q$ the (in this case \emph{minimal}) ALS
\begin{equation}\label{eqn:mr.lggt4}
\begin{bmatrix}
z & 1 & . \\
. & x & y \\
. & . & 1
\end{bmatrix}
s =
\begin{bmatrix}
. \\ . \\ 1
\end{bmatrix}.
\end{equation}
Hence $h = h_1 h_2 = y(1-xy) = \lgcd(p,q)$.
The second possibility ---starting from ALS~\eqref{eqn:mr.lggt2}---
is to do a right minimization step with respect to
column~5, then one left with respect to rows~2 and~3 and
finally a right (minimization step).
Again one obtains the ALS~\eqref{eqn:mr.lggt4}
(up to admissible scaling of rows and columns)
where the right factor $y$ of $q$ remains.
Therefore $\lgcd(p,q) = y - yxy$.
For further details concerning the minimal polynomial
multiplication (Proposition~\ref{pro:mr.minmul})
we refer to 
\cite{Schrempf2017b9}% JSC2018 0747-7171
.
\end{Example}

\begin{remark}
It can happen that ---after there is no more ``L''-minimization step
possible--- the ALS is \emph{not} minimal,
that is, an additional ``single'' left or right minimization step
can be carried out. More details on that are part of the
general factorization theory 
\cite{Schrempf2017c9}% IEJA001 1306-6048
.
Here it suffices to take a closer look at the ALS~\eqref{eqn:mr.lggt4}:
\emph{both} right factors of $p$ (here $xz$) respectively $q$ (here $y$)
can still be ``read'' directly.
(This would not be possible any more if one left or right step
would be carried out.)
\end{remark}

\begin{Remark}\label{rem:mr.linref}
For a \emph{refined} ALS for $p\inv$, an ALS of dimension~$n$
for $p\inv q$ with ``factors'' of rank $\sqrt{n}$ and an alphabet
with $d$ letters, the complexity for computing the left (or right)
gcd is roughly $\complexity(dn^5)$.
Although in general refinement is difficult because of the necessity
to solve systems of polynomial equations (over a not necessarily
algebraically closed field), especially for polynomials
\emph{linear} techniques are very useful for the factorization
(and therefore for the refinement of the inverse).
As an example we take the polynomial $p = (1-xy)(2+yx)(3-yz)(2-zy)(1-xz)(3+zx)x$
of rank $n=14$ which has already 64~terms. To get the first left
(irreducible) factor (of rank~3) we just need to create an upper right
block of zeros (in the system matrix) of size $2 \times 11$
which can be accomplished
by either using the columns~2--3 and rows~4--13
\emph{or} column~2 and rows~3--13 (for elimination)
\cite{Schrempf2017b9}% JSC2018 0747-7171
.
Both cases result in a \emph{linear} system of equations
because column and row transformations do not ``overlap''.
Solving one of these $2(n-2)$ systems has (at most) complexity
$\complexity(d n^6)$, hence in total we have $\complexity(d n^7)$.
Checking irreducibility of polynomials (using Gr\"obner bases)
works practically up to rank~12 
\cite{Janko2018a}% TH104 thesis
.
\end{Remark}

\section*{Epilogue}

This work is the last in a series for the development of
tools for the work with linear representations (for elements
in the free field) especially for the implementation in
computer algebra systems.
A ``practical'' guide giving an overview
and an introduction is
\cite{Schrempf2018b}% TH105 thesis
\ (in German, with remarks on the implementation) and 
\cite{Schrempf2018c2}% X180905425 arxiv
.

The main idea is as simple as in the usage of ``classical''
fractions (for elements in the field of rational numbers):
\emph{calculating}, \emph{factorizing} and \emph{minimizing}
(or \emph{cancelling}), for example
\begin{align*}
&\frac{2}{3} \cdot \frac{3}{4}
  = \frac{6}{12} = \frac{2\cdot 3}{2\cdot 2 \cdot 3}
  = \frac{1}{2}\quad\text{or}\\
&\frac{1}{2} + \frac{3}{2}
  = \frac{4}{2}
  = \frac{2\cdot 2}{2} = 2.
\end{align*}
At some point one stops this loop and uses the fraction
(with \emph{coprime} numerator and denominator). 
Clearly, one could simplify things by remembering the
factorization of the numerator (for the product)
and the denominator (for the sum and the product).
In our case of the free field, linear representations
(or admissible linear systems) are just ``free fractions''
\ldots

However, to understand how the transition from using
nc rational expressions (to represent elements in the
free field) to (minimal) admissible linear systems
in \emph{standard form} affects the capabilities of
thinking about (free) nc algebra, one needs to go to the
meta level
\cite{Kraemer2014a}% B00002 2210-5441
.

\section*{Acknowledgement}

I am very grateful to Wolfgang Tutschke and
Thomas Hirschler for encouraging me in
following my mathematical way in a difficult time.
And I thank the anonymous referees for the constructive remarks,
in particular to stress the various connections to related
areas and improve the exposition.
The open access funding is provided by University of Vienna.

\ifJOURNAL
\bibliographystyle{unsrt}
\else
\addcontentsline{toc}{section}{Bibliography}
\bibliographystyle{alpha}
\fi
\bibliography{doku}

\end{document}